  \theoremstyle{plain}
    \newtheorem{thm}{Theorem}[section]
    \newtheorem{prop}[thm]{Proposition}
    \newtheorem{subsec}[thm]{}
\theoremstyle{definition}
    \newtheorem{defn}[thm]{Definition}
\theoremstyle{remark}
\newcommand{\Hom}{\operatorname{Hom}}
\title{}
\author{}
\date{}
\begin{document}
\allowdisplaybreaks

\title[Cohomology and automorphisms for a matched pair of
3-Lie algebras
]{Cohomology and automorphisms for a matched pair of
3-Lie algebras
}

\author{Tao Zhang}
\address{School of Mathematics and Statistics,\\
Henan Normal University, Xinxiang 453007, P. R. China;\\
 E-mail address:\texttt{{  zhangtao@htu.edu.cn}}}

\author{Jingzi Zhang}
\address{School of Mathematics and Statistics,\\
Henan Normal University, Xinxiang 453007, P. R. China;\\
 E-mail address:\texttt{{  zhangjingzi@163.com}}}

\begin{abstract}
We begin by reviewing the definition of 3-Lie algebras and the fundamental concepts of matched pairs. Subsequently, we introduce the representation theory of matched pairs and define the semidirect product. Building on this foundation, we define the low-dimensional cohomology groups of matched pairs. In addition, we explore the infinitesimal deformations and abelian extensions of matched pairs. Finally, we examine the inducibility of automorphisms of matched pairs and present related results through the Wells exact sequence.
\end{abstract}

\maketitle

%\curraddr{}
%\email{}

%\subjclass[2010]{}
%\keywords{}

\medskip

{\em Keywords.} Matched pair of 3-Lie algebras, Representations, Cohomology, Automorphisms, Wells exact sequences.

%Averaging algebras of nonzero weight, Triassociative algebras, $L_\infty$-algebras, Cohomology, Homotopy algebras.

%\medskip

%\noindent {\sf Date of resubmission:} July 26, 2021.

\thispagestyle{empty}

\tableofcontents

%\vspace{0.2cm}

\medskip

\section{Introduction}\label{sec1}
 3-Lie algebras are a significant class of algebraic structures that have garnered considerable attention due to their applications in mathematical physics, algebraic geometry, and representation theory. The concept of 3-Lie algebras can be traced back to the work of Nambu \cite{nambu}. In 1973, Nambu proposed generalized Hamiltonian mechanics and introduced the ternary bracket, which laid the foundation for the definition of 3-Lie algebras. These algebras extend the concept of Lie algebras by incorporating a ternary bracket, which satisfies a generalized Jacobi identity. In recent years, research on the representation theory, cohomology and deformation theory of 3-Lie algebras has seen a surge. For example, \cite{zhang0} had made significant contributions to the theory of deformations and extensions of 3-Lie color algebras.  Bai \cite{bai} investigated the bialgebras of 3-Lie algebras and the classical Yang-Baxter equation, highlighting their connections with Manin triples. It worth also mentioning that \cite{xu} investigated the related problem of abelian extensions of 3-Lie algebras, and \cite{song} studies issued related to the non-abelian extensions of 3-Lie algebras.

 The notion of matched pairs, originally introduced by Majid \cite {majid1, majid2} in the context of Lie groups and later extended to Lie algebras, which used to study the set-theoretical solutions of the Yang-Baxter equation and provided a powerful framework for understanding the interactions between two algebraic structures. In Poisson geometry, the concept of matched pairs of Lie groupoids and Lie algebroids was studied in \cite{mack,mokri}.
 The matched pairs of Leibniz algebras and 3-Lie algebras were defined in \cite{agore,zhang}. Recently, the representations and cohomology of matched pairs of Lie algebras were investigated in \cite{baishya}, and the matched pairs of (transposed) Poisson 3-Lie algebras were studied in \cite{zhou}.

 On the other hand, a fascinating study related to the expansion of algebraic structures concerns the extendability or inducibility of a pair of automorphisms. Under what conditions can a pair of automorphisms be extended? This question was first posed by Wells \cite{well} in the context of abstract groups and was further explored in \cite{jin, passi}. Since then, several authors have continued to investigate this subject in greater depth.
 The extendability problem for  abelian extensions of Lie algebras and 3-Lie algebas has been addressed in \cite{bar,tan}. As a byproduct of these investigations, the Wells exact sequences for various types of algebras have been obtained \cite{hazra,mishra}, which effectively connect the automorphism groups with the second cohomology groups.

In the first part of this paper, we take the initiative to study a representation of a matched pair of 3-Lie algebras. A representation of a matched pair of 3-Lie algebras $(\mathfrak{g}, \mathfrak{h}, \rho, \psi)$ is given by a quadruple $(V, W, \alpha, \beta)$ in which $V$ and $W$ are both representations of the 3-Lie algebras $\mathfrak{g}$ and $\mathfrak{h}$, and  $\alpha: V \times \mathfrak{g} \rightarrow Hom(\mathfrak{h}, W)$ and $\beta: W \times \mathfrak{h} \rightarrow Hom(\mathfrak{g}, V)$ are bilinear maps  satisfying a set of identities . We present a cohomology theory in the setting of matched pairs of 3-Lie algebras. Subsequently, we explore its applications by examining infinitesimal deformations and abelian extensions of matched pairs of 3-Lie algebras. Lastly, we delve into the inducibility problem of automorphisms within the framework of matched pairs of 3-Lie algebras. To address this issue, we construct Wells exact sequences.

The structure of this paper is outlined as follows. In Section  \ref{sec2}, we introduce the concept of representations of a matched pair of 3-Lie algebras and provide constructions of semi-direct products. Section  \ref{sec4} are devoted to investigating the cohomology of a matched pair of 3-Lie algebras. In Section \ref{sec5}, we delve into infinitesimal deformations and abelian extensions of a matched pair of 3-Lie algebras. Finally, in Sections \ref{sec6} and \ref{sec7}, we develop Wells exact sequences to tackle the inducibility problem of automorphisms.

%\section{3-Lie algebras and matched pair of 3-Lie algebras}\label{sec1}
\section{Representations of a matched pair of 3-Lie algebras}\label{sec2}
In this section, we recall some necessary background on 3-Lie algebras and matched pairs of 3-Lie algebras.
Then, we introduce the concept of representations of a matched pair of 3-Lie algebras and give some examples.
Finally, we define the semidirect product in the context of matched pairs of 3-Lie algebras.

A 3-Lie algebra consists of a vector space $\mathfrak{g}$ together with a linear map $[~, ~, ~] : \wedge^3 \mathfrak{g} \rightarrow \mathfrak{g}$ such that the Jacobi identity.
\begin{align*}
    [x_1, x_2, [y_1, y_2, y_3]]= [[x_1,x_2,y_1], y_2, y_3] + [y_1, [x_1, x_2, y_2], y_3] + [y_1, y_2, [x_1, x_2, y_3]]
\end{align*}
 hold for all $x_1, x_2, y_1, y_2, y_3 \in \mathfrak{g}$ .\\
According to \cite{zhang}, denote by $x=(x_1, x_2)$ and $ad(x)y_i=[x_1, x_2, y_i]$, then the above equality can be rewritten in the form
 \begin{align*}
 ad(x)[y_1, y_2, y_3]= [ad(x)y_1, y_2, y_3] + [y_1, ad(x)y_2, y_3] + [y_1, y_2, ad(x)y_3]
 \end{align*}
 In the following, we recall matched pairs of 3-Lie algebras and the corresponding bicrossed product.

\begin{defn}[\cite{bai}]\label{defn-mpl}
    A {\em matched pair of 3-Lie algebras} is a quadruple $(\mathfrak{g}, \mathfrak{h}, \rho, \psi)$ in which $\mathfrak{g}, \mathfrak{h}$ are both 3-Lie algebras together with linear maps $\rho : \mathfrak{g} \times \mathfrak{g} \times \mathfrak{h} \rightarrow \mathfrak{h}$ and $\psi : \mathfrak{h} \times \mathfrak{h} \times \mathfrak{g} \rightarrow \mathfrak{g}$ satisfying the following conditions:
    \begin{align}
        \psi(a_4, a_5)([x_1, x_2, x_3]) =~& [\psi(a_4, a_5)x_1, x_2, x_3] + [x_1, \psi(a_4, a_5)x_2, x_3] + [x_1, x_2, \psi(a_4, a_5)x_3],\label{11}\\
       -\psi(\rho(x_1, x_2)a_3, a_5)x_4 =~& -\psi(\rho(x_1, x_4)a_5, a_3)x_2 + \psi(\rho(x_2, x_4)a_5, a_3)x_1 - [x_1, x_2, \psi(a_3, a_5)x_4],\label{22}\\
       [\psi(a_2, a_3)x_1, x_4, x_5]   =~& \psi(a_2, a_3)[x_1,x_4, x_5] + \psi(\rho(x_4, x_5)a_2, a_3)x_1 + \psi(a_2, \rho(x_4, x_5)a_3)x_1 , \label{33}\\
        \rho(x_4, x_5)([a_1, a_2, a_3]) =~& [\rho(x_4, x_5)a_1 ,a_2, a_3] + [a_1, \rho(x_4, x_5)a_2, a_3] + [a_1, a_2, \rho(x_4, x_5)a_3],\label{44}\\
       -\rho(\psi(a_1, a_2)x_3, x_5)a_4 =~& -\rho(\psi(a_1, a_4)x_5, x_3)a_2 + \rho(\psi(a_2, a_4)x_5, x_3)a_1 - [a_1, a_2, \rho(x_3, x_5)a_4],\label{55}\\
       [\rho(x_2, x_3)a_1, a_4, a_5]   =~& \rho(x_2, x_3)[a_1,a_4, a_5] + \rho(\psi(a_4, a_5)x_2, x_3)a_1 + \rho(x_2, \psi(a_4, a_5)x_3)a_1 ,\label{66}
    \end{align}
    for $x_1, x_2, x_3, x_4, x_5 \in \mathfrak{g}$ and $a_1, a_2, a_3, a_4, a_5 \in \mathfrak{h}$.
\end{defn}

\begin{thm}[\cite{bai}]\label{thm-bicross}
    Let $(\mathfrak{g}, \mathfrak{h}, \rho, \psi)$ be a matched pair of 3-Lie algebras. Then the direct sum $\mathfrak{g} \oplus \mathfrak{h}$ inherits a 3-Lie algebra structure with the bracket
    \begin{align}\label{bicross}
        [ (x_1, a_1), (x_2, a_2), (x_3, a_3)]_\Join := \big( [x_1, x_2, x_3] + \psi(a_2, a_3)x_1 + \psi(a_3, a_1)x_2 + \psi(a_1, a_2)x_3 ,\nonumber\\
         [a_1, a_2, a_3] + \rho(x_2, x_3)a_1 + \rho(x_3, x_1)a_2 + \rho(x_1, x_2)a_3   \big),
    \end{align}
  for $ (x_1, a_1) , (x_2, a_2) , (x_3, a_3) \in \mathfrak{g} \oplus \mathfrak{h}$.
\end{thm}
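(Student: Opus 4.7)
The plan is to check the two defining properties of a 3-Lie bracket on $\mathfrak{g}\oplus\mathfrak{h}$: total skew-symmetry in the three arguments and the fundamental (3-Jacobi) identity. Skew-symmetry of $[\cdot,\cdot,\cdot]_\Join$ is essentially free: the internal parts $[x_1,x_2,x_3]$ and $[a_1,a_2,a_3]$ are totally alternating by hypothesis, while the coupling sums $\psi(a_2,a_3)x_1+\psi(a_3,a_1)x_2+\psi(a_1,a_2)x_3$ and $\rho(x_2,x_3)a_1+\rho(x_3,x_1)a_2+\rho(x_1,x_2)a_3$ are cyclic in the three pair-indices. Combined with the skew-symmetry of $\psi$ and $\rho$ in their first two slots (inherent in their roles as actions on $\wedge^2\mathfrak{h}$ and $\wedge^2\mathfrak{g}$ respectively), this cyclicity upgrades to full antisymmetry on each component.

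For the fundamental identity, the plan is to fix five arguments $(x_1,a_1),(x_2,a_2),(y_1,b_1),(y_2,b_2),(y_3,b_3)\in\mathfrak{g}\oplus\mathfrak{h}$, expand
\[
L:=[(x_1,a_1),(x_2,a_2),[(y_1,b_1),(y_2,b_2),(y_3,b_3)]_\Join]_\Join
\]
together with the three-term right-hand side $R$ of the Jacobi identity, and then project $L-R$ onto $\mathfrak{g}$ and $\mathfrak{h}$ separately. Since the bicrossed construction is symmetric under the involution $(\mathfrak{g},\psi)\leftrightarrow(\mathfrak{h},\rho)$, $(x_i,a_i)\leftrightarrow(a_i,x_i)$, it suffices to show that the $\mathfrak{g}$-component of $L-R$ vanishes; the $\mathfrak{h}$-component then follows by the parallel argument with \eqref{44}--\eqref{66} playing the role of \eqref{11}--\eqref{33}. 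Within the $\mathfrak{g}$-projection I would sort terms by shape: purely internal monomials cancel via the 3-Jacobi identity on $\mathfrak{g}$; terms pairing $\psi(a,a')[x,x,x]$ against $[x,x,\psi(a,a')x]$ cancel via the derivation axiom \eqref{11}; mixed terms of shape $\psi(\rho(x,x)a,a')x$ cancel by \eqref{22}; and residual terms of shape $[\psi(a,a')x,x,x]$ are rewritten using \eqref{33}.

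The principal obstacle is combinatorial bookkeeping: each of $L$ and $R$ expands into several dozen monomials (four summands per component in the inner bracket, tripled by the outer structure), and one must orchestrate the cancellations carefully. The subtlest terms are the mixed ones of the form $\psi(\rho(y,y')b,a)x$ and $\rho(\psi(b,b')y,x)a$, which arise through several distinct routes in the expansion; their coefficients must match exactly before \eqref{22} and \eqref{55} can be applied to close the argument. Once each monomial is tagged by the partition of the five index labels it encodes into a $\{1,2\}$-pair and a $\{1,2,3\}$-triple, the six matched-pair axioms cancel everything in parallel and the bicrossed bracket is seen to satisfy the 3-Lie identity.
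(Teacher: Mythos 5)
The paper offers no proof of this theorem: it is imported verbatim from the cited reference \cite{bai}, so there is no in-house argument to compare yours against. Your overall strategy --- verify skew-symmetry directly, then expand the fundamental identity, sort the monomials by how the five arguments distribute between $\mathfrak{g}$ and $\mathfrak{h}$, and cancel using the matched-pair axioms --- is the standard and correct route, and your treatment of skew-symmetry (cyclicity of the coupling sums plus antisymmetry of $\rho$ and $\psi$ in their first two slots) is fine.

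There is, however, a genuine gap in the closing claim that ``the six matched-pair axioms cancel everything.'' Each of the conditions \eqref{11}--\eqref{66} involves a distribution of the five arguments with at least two drawn from each of $\mathfrak{g}$ and $\mathfrak{h}$ (three from one factor, two from the other). The distributions with exactly one argument from $\mathfrak{h}$ and four from $\mathfrak{g}$ are not touched by any of them. Concretely, taking $X_1=(x_1,0)$, $X_2=(x_2,0)$, $Y_1=(y_1,0)$, $Y_2=(y_2,0)$, $Y_3=(0,b)$, the fundamental identity for $[\cdot,\cdot,\cdot]_\Join$ collapses to
\begin{equation*}
\rho(x_1,x_2)\rho(y_1,y_2)b \;=\; \rho([x_1,x_2,y_1],y_2)b \;+\; \rho(y_1,[x_1,x_2,y_2])b \;+\; \rho(y_1,y_2)\rho(x_1,x_2)b,
\end{equation*}
and placing the lone $\mathfrak{h}$-element in the slot of $X_2$ instead yields $\rho(x,[y_1,y_2,y_3])=\rho(y_2,y_3)\rho(x,y_1)+\rho(y_3,y_1)\rho(x,y_2)+\rho(y_1,y_2)\rho(x,y_3)$. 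These are precisely the two axioms making $\rho$ a representation of the 3-Lie algebra $\mathfrak{g}$ on $\mathfrak{h}$ (dually for $\psi$ with one $\mathfrak{g}$-element among four $\mathfrak{h}$-elements), and they are independent of \eqref{11}--\eqref{66}. They are part of the definition of a matched pair in \cite{bai} but are not listed in Definition \ref{defn-mpl}, so your proof must invoke them explicitly as hypotheses; the same goes for the skew-symmetry of $\rho$ and $\psi$ in their first two arguments, which your skew-symmetry argument already uses. With those representation axioms added to your inventory, the case analysis closes and the argument is complete.
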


The 3-Lie algebra constructed in the above theorem is called the {\em bicrossed product} and it is denoted by $\mathfrak{g} \Join \mathfrak{h}$.

Consider two matched pairs of 3-Lie algebras denoted as $(\mathfrak{g}, \mathfrak{h}, \rho, \psi)$ and $(\mathfrak{g}', \mathfrak{h}', \rho', \psi')$. A morphism from $(\mathfrak{g}, \mathfrak{h}, \rho, \psi)$ to $(\mathfrak{g}', \mathfrak{h}', \rho', \psi')$ is given by a pair of 3-Lie algebra homomorphisms $f: \mathfrak{g} \rightarrow \mathfrak{g}'$ and $g : \mathfrak{h} \rightarrow \mathfrak{h}'$, which satisfy the following conditions
\begin{eqnarray} \label{mpl-mor}
    g (\rho(x_1, x_2)(a)) = {\rho}'(f(x_1, x_2))(g(a)), \label{mpl-mor-1}\\
    f (\psi(a_1, a_2)(a)) = {\psi}'(g(a_1, a_2))(f(x)), \label{mpl-mor-2}
\end{eqnarray}
for all $x, x_1, x_2 \in \mathfrak{g}, a, a_1, a_2 \in \mathfrak{h}$.

%\section{Representations of a matched pair of 3-Lie algebras}\label{sec2}

\begin{defn}\label{defn-mpl-rep}
    Let $(\mathfrak{g}, \mathfrak{h}, \rho, \psi)$ be a matched pair of 3-Lie algebras. A {\em representation} of $(\mathfrak{g}, \mathfrak{h}, \rho, \psi)$ is given by a quadruple $(V, W, \alpha, \beta)$ in which

    \begin{itemize}
        \item $V$ is a representation for both the 3-Lie algebras $\mathfrak{g}$ and $\mathfrak{h}$ (by the action maps $\rho_V : \mathfrak{g} \times \mathfrak{g} \times V \rightarrow V$ and $\psi_V : \mathfrak{h} \times \mathfrak{h} \times V \rightarrow V$, respectively),

        \item $W$ is a representation for both the 3-Lie algebras $\mathfrak{g}$ and $\mathfrak{h}$ (by the action maps $\rho_W : \mathfrak{g} \times \mathfrak{g} \times W \rightarrow W$ and $\psi_W : \mathfrak{h} \times \mathfrak{h} \times W \rightarrow W$, respectively),

        \item $\alpha: V \times \mathfrak{g} \rightarrow \Hom(\mathfrak{h}, W)$ and $\beta: W \times \mathfrak{h} \rightarrow \Hom(\mathfrak{g}, V)$ are bilinear maps (called the pairing maps) satisfying the following set of identities:
        \begin{align}
            &\psi_V(a_1,a_2)\rho_V(x_2, x_3)v_1\nonumber\\
             =~& \rho_V(\psi(a_1, a_2)x_2, x_3)v_1 + \rho_V(x_2, \psi(a_1, a_2)x_3)v_1 + \rho_V(x_2, x_3)\psi_V(a_1, a_2)v_1, \label{1-iden}\\
            &\psi_V(a_1,a_2)\rho_V(x_1, x_3)v_2 \nonumber\\
            =~& \rho_V(\psi(a_1, a_2)x_1, x_3)v_2 + \rho_V(x_1, \psi(a_1, a_2)x_3)v_2 + \rho_V(x_1, x_3)\psi_V(a_1, a_2)v_2, \label{2-iden}\\
            &\psi_V(a_1,a_2)\rho_V(x_1, x_2)v_3 \nonumber\\
            =~& \rho_V(\psi(a_1, a_2)x_1, x_2)v_3 + \rho_V(x_1, \psi(a_1, a_2)x_2)v_3 + \rho_V(x_1, x_2)\psi_V(a_1, a_2)v_3, \label{3-iden}
         \end{align}
            \begin{align}
            &(\beta(w_1, a_2)-\beta(w_2, a_1))[x_1, x_2, x_3] \nonumber\\
            =~& \rho_V(x_2, x_3)(\beta(w_1, a_2)-\beta(w_2, a_1))x_1 + \rho_V(x_1, x_3)(\beta(w_1, a_2)-\beta(w_2, a_1))x_2 \nonumber\\
            &+ \rho_V(x_1, x_2)(\beta(w_1, a_2)-\beta(w_2, a_1))x_3,\label{4-iden}\\
            &\rho_V(x_2, \psi(a_1, a_2)x_3)v_1 = \psi_V(\rho(x_2, x_3)a_2, a_1)v_1, \label{5-iden}\\
            &\rho_V(x_1, \psi(a_1, a_2)x_3)v_2 = \psi_V(\rho(x_1, x_3)a_2, a_1)v_2, \label{6-iden}\\
            &\rho_V(x_1, x_2)\psi_V(a_1, a_2)v_3 = \psi_V(\rho(x_1, x_2)a_1, a_2)v_3, \label{7-iden}
                     \end{align}
            \begin{eqnarray}
            &&\rho_V(x_1, x_2)(\beta(w_1, a_2)-\beta(w_2, a_1))x_3 \nonumber\\
            &=& \beta(\rho_W(x_2, x_3)w_2 + (\alpha(v_2, x_3)-\alpha(v_3, x_1))a_2, a_1)x_1 - \beta(w_1, \rho(x_2, x_3)a_2)x_1\nonumber\\
            &&- \beta(\rho_W(x_1, x_3)w_2 + (\alpha(v_1, x_3)-\alpha(v_3, x_1))a_2, a_1)x_2 + \beta(w_1, \rho(x_1, x_3)a_2)x_2\nonumber\\
            &&+ \beta(\rho_W(x_1, x_2)w_1 + (\alpha(v_1, x_2)-\alpha(v_2, x_1))a_1, a_2)x_3 - \beta(w_2, \rho(x_1, x_2)a_1)x_3, \label{8-iden}\\
            &&\rho_V(x_2, x_3)\psi_V(a_1, a_2)v_1 \nonumber\\
            &=& \psi_V(a_1, a_2)\rho_V(x_2, x_3)v_1 + \psi_V(\rho(x_2, x_3)a_1, a_2)v_1 + \psi_V(a_1, \rho(x_2, x_3)a_2)v_1, \label{9-iden}\\
            &&\rho_V(x_2, x_3)(\beta(w_1, a_2)-\beta(w_2, a_1))x_1 \nonumber\\
            &=&(\beta(w_1, a_2)-\beta(w_2, a_1))[x_1, x_2, x_3] - \beta(w_2, \rho(x_2, x_3)a_1)x_1 +\beta(w_1, \rho(x_2, x_3)a_2))x_1 \nonumber\\
            &&+ \beta(\rho_W(x_2, x_3)w_1+(\alpha(v_2, x_3)-\alpha(v_3, x_2))a_1, a_2)x_1 \nonumber\\
            &&-\beta(\rho_W(x_2, x_2)w_2+(\alpha(v_2, x_3)-\alpha(v_3, x_2))a_2, a_1)x_1 ,\label{10-iden}\\
           &&\rho_V(\psi(a_1, a_2)x_1, x_3)v_2 = \psi_V(a_1, a_2)\rho_V(x_1, x_3)v_2,\label{11-iden}\\
           &&\rho_V(\psi(a_1, a_2)x_1, x_2)v_3 = \psi_V(a_1, a_2)\rho_V(x_1, x_2)v_3,\label{12-iden}\\
           &&\rho_W(x_1,x_2)\psi_W(a_2, a_3)w_1 \nonumber\\
           &=& \psi_W(\rho(x_1, x_2)a_2, a_3)w_1 + \psi_W(a_2, \rho_(x_1, x_2)a_3)w_1 + \psi_W(a_2, a_3)\rho_W(x_1, x_2)w_1, \label{13-iden}\\
            &&\rho_W(x_1,x_2)\psi_W(a_1, a_3)w_2 \nonumber\\
            &=& \psi_W(\rho(x_1, x_2)a_1, a_3)w_2 + \psi_W(a_1, \rho_(x_1, x_2)a_3)w_2 + \psi_W(a_1, a_3)\rho_W(x_1, x_2)w_2, \label{14-iden}\\
            &&\rho_W(x_1,x_2)\psi_W(a_1, a_2)w_3 \nonumber\\
            &=& \psi_W(\rho(x_1, x_2)a_1, a_2)w_3 + \psi_W(a_1, \rho_(x_1, x_2)a_2)w_3 + \psi_W(a_1, a_2)\rho_W(x_1, x_2)w_3, \label{15-iden}\\
            &&(\alpha(v_1,x_2)-\alpha(v_2, x_1))[a_1, a_2, a_3] \nonumber\\
            &=& \psi_W(a_2, a_3)(\alpha(v_1,x_2)-\alpha(v_2, x_1))a_1 + \psi_W(a_1, a_3)(\alpha(v_1,x_1)+\alpha(v_2, x_2))a_2 \nonumber\\
            &&+\psi_W(a_1, a_2)(\alpha(v_1,x_2)-\alpha(v_2, x_1))a_3,\label{16-iden}\\
            &&\psi_W(a_2, \rho(x_1, x_2)a_3)w_1 = \rho_W(\psi(a_2, a_3)x_2, x_1)w_1, \label{17-iden}
            \end{eqnarray}
            \begin{align}
            &\psi_W(a_1, \rho(x_1, x_2)a_3)w_2 = \rho_W(\psi(a_1, a_3)x_2, x_1)w_2, \label{18-iden}\\
            &\psi_W(a_1, a_2)\rho_W(x_1, x_2)w_3 = \rho_W(\psi(a_1, a_2)x_1, x_2)w_3, \label{19-iden}\\
            &\psi_W(a_1, a_2)(\alpha(v_1,x_2)-\alpha(v_2, x_1))a_3 \nonumber\\
            =~& \alpha(\psi_V(a_2, a_3)v_2 + (\beta(w_2, a_3)-\beta(w_3, a_1))x_2, x_1)a_1 - \alpha(v_1, \rho(a_2, a_3)x_2)a_1\nonumber\\
            ~& -\alpha(\psi_V(a_1, a_3)v_2 + (\beta(w_1, a_3)-\beta(w_3, a_1))x_2, x_1)a_2 + \alpha(v_1, \rho(a_1, a_3)x_2)a_2\nonumber\\
            ~&+ \alpha(\psi_V(a_1, a_2)v_1 + (\beta(w_1, a_2)-\beta(w_2, a_1))x_1, x_2)a_3 - \alpha(v_2, \rho(a_1, a_2)x_1)a_3, \label{20-iden}\\
            &\psi_W(a_2, a_3)\rho_W(x_1, x_2)w_1 \nonumber\\
            =~& \rho_W(x_1, x_2)\psi_V(a_2, a_3)w_1 + \rho_W(\psi(a_2, a_3)x_1, x_2)w_1 + \rho_W(x_1, \psi(a_2, a_3)x_2)w_1, \label{21-iden}\\
           &\psi_W(a_2, a_3)(\alpha(v_1, x_2)-\alpha(v_2, x_1))a_1 \nonumber\\
            =~&(\alpha(v_1, x_2)-\alpha(v_2, x_1))[a_1, a_2, a_3] - \alpha(v_2, \psi(a_2, a_3)x_1)a_1 +\alpha(v_1, \psi(a_2, a_3)x_2))a_1 \nonumber\\
            ~&+ \alpha(\psi_V(a_2, a_3)v_1+(\beta(w_2, a_3)-\beta(w_3, a_2))x_1, x_2)a_1 \nonumber\\
            ~&-\alpha(\psi_V(a_2, a_2)v_2+(\beta(w_2, a_3)-\beta(w_3, a_2))x_2, x_1)a_1,\label{22-iden}\\
            &\psi_W(\rho(x_1, x_2)a_1, a_3)w_2 = \rho_W(x_1, x_2)\psi_W(a_1, a_3)w_2,\label{23-iden}\\
            &\psi_W(\rho(x_1, x_2)a_1, a_2)w_3 = \rho_W(x_1, x_2)\psi_W(a_1, a_2)w_3, \label{24-iden}
         \end{align}
    \end{itemize}
    for $x_1, x_2, x_3 \in \mathfrak{g}$, $a_1, a_2, a_3 \in \mathfrak{h}$, $v_1, v_2, v_3 \in V$ and $w_1, w_2, w_3 \in W$.
\end{defn}

\begin{thm}\label{semid-thm}
Let $(\mathfrak{g}, \mathfrak{h}, \rho, \psi)$ be a matched pair of 3-Lie algebras and $(V, W, \alpha, \beta)$ be a representation of it. Then the quadruple $(\mathfrak{g} \ltimes V, \mathfrak{h} \ltimes W, \rho \ltimes \alpha, \psi \ltimes \beta)$ is a matched pair of 3-Lie algebras, where the maps
    \begin{align*}
        \rho \ltimes \alpha : (\mathfrak{g} \oplus V) \times (\mathfrak{g} \oplus V) \times (\mathfrak{h} \oplus W) \rightarrow \mathfrak{h} \oplus W ~~~~ \text{ and } ~~~~ \psi \ltimes \beta : (\mathfrak{h} \oplus W) \times (\mathfrak{h} \oplus W) \times (\mathfrak{g} \oplus V) \rightarrow \mathfrak{g} \oplus V
    \end{align*}
    are respectively given by
    \begin{align*}
        &\rho \ltimes \alpha((x_1, v_1), (x_2, v_2))(a_1, w_1) := \big( \rho(x_1, x_2)a_1, \rho_W(x_1, x_2)w_1 + (\alpha(v_1, x_2)-\alpha(v_2, x_1))a_1 \big) \\
        &\psi \ltimes \beta((a_1, w_1), (a_2, w_2))(x_1, v_1) := \big( \psi(a_1, a_2)x_1, \psi_V(a_1, a_2)v_1 + (\beta(w_1, a_2)-\beta(w_2, a_1))x_1 \big)
    \end{align*}
\end{thm}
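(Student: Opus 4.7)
The plan is to verify directly that the quadruple $(\mathfrak{g} \ltimes V, \mathfrak{h} \ltimes W, \rho \ltimes \alpha, \psi \ltimes \beta)$ satisfies all six defining conditions (\ref{11})--(\ref{66}) of a matched pair of 3-Lie algebras. First, I would note that $\mathfrak{g} \ltimes V$ and $\mathfrak{h} \ltimes W$ are indeed 3-Lie algebras: the semidirect product bracket on $\mathfrak{g} \oplus V$ uses both actions $\rho_V$ (of $\mathfrak{g}$) and $\psi_V$ (of $\mathfrak{h}$)? Actually only one 3-Lie algebra structure is needed on $\mathfrak{g} \ltimes V$, so I must fix once and for all that the bracket on $\mathfrak{g} \ltimes V$ uses the $\mathfrak{g}$-action $\rho_V$ (and similarly $\mathfrak{h} \ltimes W$ uses $\psi_W$). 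The remaining actions $\psi_V$ and $\rho_W$ will appear only through the extended maps $\rho \ltimes \alpha$ and $\psi \ltimes \beta$.

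Next, I would expand each of (\ref{11})--(\ref{66}) applied to elements $(x_i, v_i) \in \mathfrak{g} \oplus V$ and $(a_j, w_j) \in \mathfrak{h} \oplus W$ by substituting the formulas for $\rho \ltimes \alpha$ and $\psi \ltimes \beta$. Every such expansion decomposes into two components: a $\mathfrak{g}$-component (respectively $\mathfrak{h}$-component) which is precisely the corresponding identity (\ref{11})--(\ref{66}) for the original matched pair $(\mathfrak{g}, \mathfrak{h}, \rho, \psi)$, and a $V$-component (respectively $W$-component) which, after collecting terms, is exactly one of the 24 identities in Definition \ref{defn-mpl-rep}. Concretely, I expect the matching to be: the $V$-component of (\ref{11}) unfolds into (\ref{1-iden})--(\ref{4-iden}) (three pieces controlling where $v_1,v_2,v_3$ occur plus the $\beta$-piece when the $V$-slot is hit by a semidirect term); the $V$-component of (\ref{22}) produces (\ref{5-iden})--(\ref{8-iden}); the $V$-component of (\ref{33}) yields (\ref{9-iden})--(\ref{12-iden}); and symmetrically (\ref{44}), (\ref{55}), (\ref{66}) produce the $W$-identities (\ref{13-iden})--(\ref{15-iden}), (\ref{16-iden})--(\ref{19-iden}), (\ref{20-iden})--(\ref{24-iden}) respectively.

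For each case I would carry out the calculation as follows: write out the left-hand side of the axiom with the new data, distribute over the direct sum, use the formulas for the semidirect brackets on $\mathfrak{g} \ltimes V$ and $\mathfrak{h} \ltimes W$ wherever a bracket $[~,~,~]$ appears in the axiom, and then separate the base component from the module component. The base component cancels because $(\mathfrak{g}, \mathfrak{h}, \rho, \psi)$ is a matched pair; the module component is then reorganized into the named identity. The asymmetric presence of $\beta(w_1, a_2) - \beta(w_2, a_1)$ in $\psi \ltimes \beta$ and of $\alpha(v_1, x_2) - \alpha(v_2, x_1)$ in $\rho \ltimes \alpha$ is exactly what produces the combinations appearing on the right-hand sides of (\ref{4-iden}), (\ref{8-iden}), (\ref{10-iden}), (\ref{16-iden}), (\ref{20-iden}), (\ref{22-iden}).

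The main obstacle is purely bookkeeping: identities (\ref{8-iden}), (\ref{10-iden}), (\ref{20-iden}), (\ref{22-iden}) contain many terms, and expanding (\ref{22}) or (\ref{55}) applied to elements with $V$- or $W$-components produces dozens of summands that must be grouped carefully. I would keep the computations manageable by treating the three "coordinate" positions for $v_1, v_2, v_3$ (respectively $w_1, w_2, w_3$) separately, which is why Definition \ref{defn-mpl-rep} lists three parallel identities (e.g., (\ref{1-iden})--(\ref{3-iden})) for each structural condition. Once this splitting is made, no new matched-pair axiom is needed, and the theorem follows by assembling the verified components. I would close by remarking that this gives an alternative characterization: $(V, W, \alpha, \beta)$ is a representation of $(\mathfrak{g}, \mathfrak{h}, \rho, \psi)$ if and only if the quadruple $(\mathfrak{g} \ltimes V, \mathfrak{h} \ltimes W, \rho \ltimes \alpha, \psi \ltimes \beta)$ is a matched pair, justifying the lengthy list of axioms in Definition \ref{defn-mpl-rep}.
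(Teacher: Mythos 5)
Your proposal follows essentially the same route as the paper's proof: a direct verification of the six matched-pair axioms for the semidirect products, splitting each expanded axiom into a base component (which holds because $(\mathfrak{g},\mathfrak{h},\rho,\psi)$ is a matched pair) and a module component (which is absorbed by the identities of Definition \ref{defn-mpl-rep}), with the first block of identities handling the representation conditions and the remaining blocks handling the compatibility conditions. The only discrepancy is a minor indexing slip in your second half --- the paper pairs (\ref{13-iden})--(\ref{16-iden}) with (\ref{44}), (\ref{17-iden})--(\ref{20-iden}) with (\ref{55}), and (\ref{21-iden})--(\ref{24-iden}) with (\ref{66}), mirroring your own grouping in the first half --- which would sort itself out in the actual computation.
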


\begin{proof}
    Note that $\mathfrak{g} \ltimes V$ and $\mathfrak{h} \ltimes W$ are both 3-Lie algebras with the 3-Lie brackets (both denoted by the same notation) are respectively given by
    \begin{align*}
        [(x_1, v_1), (x_2, v_2), (x_3, v_3)] := \big( [x_1, x_2, x_3] , \rho_V(x_1, x_2)v_3 + \rho_V(x_3, x_1)v_2 + \rho_V(x_2, x_3)v_1  \big) \\
        [(a_1, w_1), (x_2, w_2), (a_3, w_3)] := \big( [a_1, a_2, a_3] , \psi_W(a_1, a_2)w_3 + \psi_W(a_3, a_1)w_2 + \psi_W(a_2, a_3)w_1  \big),
    \end{align*}
    for $(x_1, v_1), (x_2, v_2), (x_3, v_3) \in \mathfrak{g} \oplus V$ and $(a_1, w_1) , (a_2, w_2), (a_3, w_3) \in \mathfrak{h} \oplus W$. We first claim that the map $\psi \ltimes \beta$ defines a representation of the 3-Lie algebra $\mathfrak{h} \ltimes W$ on the space $\mathfrak{g} \oplus V$. To show this, we observe that
    \begin{eqnarray*}
        &&[\psi \ltimes \beta((a_1, w_1), (a_2, w_2))(x_1, v_1), (x_2, v_2), (x_3, v_3)] \\
        &&+ [(x_1, v_1), \psi \ltimes \beta((a_1, w_1), (a_2, w_2))(x_2, v_2), (x_3, v_3)]  \\
        &&+ [(x_1, v_1), (x_2, v_2), \psi \ltimes \beta((a_1, w_1), (a_2, w_2))(x_3, v_3)] \\
        &=& [(\psi(a_1, a_2)x_1, \psi_V(a_1, a_2)v_1 + (\beta(w_1, a_2)-\beta(w_2, a_1))x_1), (x_2, v_2), (x_3, v_3)]\\
        &&+ [(x_1, v_1), (\psi(a_1, a_2)x_2, \psi_V(a_1, a_2)v_2 + (\beta(w_1, a_2)-\beta(w_2, a_1))x_2), (x_3, v_3)] \\
        &&+ [(x_1, v_1), (x_2, v_2), (\psi(a_1, a_2)x_3, \psi_V(a_1, a_2)v_3 + (\beta(w_1, a_2)-\beta(w_2, a_1))x_3)] \\
        &=& \big( [\psi(a_1, a_2)x_1, x_2, x_3], \rho_V(\psi(a_1, a_2)x_1, x_2)v_3 - \rho_V(\psi(a_1, a_2)x_1, x_3)v_2 \\
        &&+ \rho_V(x_2, x_3)(\psi_V(a_1, a_2)v_1 + (\beta(w_1, a_2)-\beta(w_2, a_1))x_1) \big) + \big( [x_1, \psi(a_1, a_2)x_2, x_3],\\
         &&\rho_V(x_1, \psi(a_1, a_2)x_2)v_3 + \rho_V(x_1, x_3)(\psi_V(a_1, a_2)v_2 + (\beta(w_1, a_2)-\beta(w_2, a_1))x_2) \big) \\
        && +\big( [x_1, x_2, \psi(a_1, a_2)x_3], \rho_V(x_1, x_2)(\psi_V(a_1, a_2)v_3 + (\beta(w_1, a_2)-\beta(w_2, a_1))x_3) \\
        &&+ \rho_V(x_1, \psi(a_1, a_2)x_3)v_2 + \rho_V(x_2, \psi(a_1, a_2)x_3)v_1 \big) \\
        &=& \psi \ltimes \beta((a_1, w_1), (a_2, w_2))[(x_1, v_1), (x_2, v_2), (x_3, v_3)] ,
    \end{eqnarray*}
    for $(x_1, v_1), (x_2, v_2), (x_3, v_3) \in \mathfrak{g} \oplus V$ and $(a_1, w_1), (a_2, w_2) \in \mathfrak{h} \oplus W$. This proves our claim. Similarly, by using the identity (\ref{13-iden}), (\ref{14-iden}), (\ref{15-iden}), (\ref{16-iden}) one can prove that
    \begin{eqnarray*}
        &&[\rho \ltimes \alpha((x_1, v_1), (x_2, v_2))(a_1, w_1), (a_2, w_2), (a_3, w_3)] \\
        &&+ [(a_1, w_1), \rho \ltimes \alpha((x_1, v_1), (x_2, v_2))(a_2, w_2), (a_3, w_3)] \\
        &&+ [ (a_1, w_1), (a_2, w_2), \rho \ltimes \alpha(x_1, v_1), (x_2, v_2)(a_3,w _3)] \\
        &=& \rho \ltimes \alpha((x_1, v_1), (x_2, v_2))[(a_1, w_1), (a_2, w_2), (a_3, w_3)] ,
    \end{eqnarray*}
    for $(a_1, w_1), (a_2,w_2), (a_3, w_3) \in \mathfrak{h} \oplus W$ and $(x_1, v_1), (x_2, v_2) \in \mathfrak{g} \oplus V$. This shows that the map $\rho \ltimes \alpha$ defines a representation of the 3-Lie algebra $\mathfrak{g} \ltimes V$ on the space $\mathfrak{h} \oplus W$. Thus, we are to show the compatibility conditions. For any $(x_1, v_1), (x_2, v_2), (x_3,v_3) \in \mathfrak{g} \oplus V$ and $(a_1, w_1), (a_2, w_2)\in \mathfrak{h} \oplus W$, we have
    \begin{eqnarray*}
       && \psi \ltimes \beta(\rho \ltimes \alpha((x_1, v_1), (x_2, v_2))(a_1, w_1), (a_2, w_2))(x_3, v_3) \\
       &&+  \psi \ltimes \beta(\rho \ltimes \alpha((x_2, v_2), (x_3, v_3))(a_2, w_2), (a_1, w_1))(x_1, v_1) \\
       &&- \psi \ltimes \beta(\rho \ltimes \alpha((x_1, v_1), (x_3, v_3))(a_2, w_2), (a_1, w_1))(x_2, v_2) \\
       &=& \big( \psi(\rho(x_1, x_2)a_1, a_2) x_3, \psi_V(\rho(x_1, x_2)a_1, a_2)v_3 + \beta(\rho_W(x_1, x_2)w_1 + (\alpha(v_1, x_2)-\alpha(v_2, x_1))a_1, a_2)x_3\\
       &&-\beta(w_2, \rho(x_1,x_2)a_1)x_3 \big)+ \big( \psi(\rho(x_2, x_3)a_2, a_1) x_1, \psi_V(\rho(x_2, x_3)a_2, a_1)v_1 \\
       &&+ \beta(\rho_W(x_2, x_3)w_2 + (\alpha(v_2, x_3)-\alpha(v_3, x_2))a_2, a_1)x_1 - \beta(w_1, \rho(x_2,x_3)a_2)x_1 \big) \\
       &&- \big( \psi(\rho(x_1, x_3)a_2, a_1) x_2, \psi_V(\rho(x_1, x_3)a_2, a_1)x_2 + \beta(\rho_W(x_1, x_3)w_2 \\
       &&+ (\alpha(v_1, x_3)-\alpha(v_3,x_1))a_2,a_1)x_2-\beta(w_1,\rho(x_1,x_3)a_2)x_2\big) \\
       &=& [(x_1, v_1), (x_2, v_2)),\psi \ltimes \beta((a_1, w_1), (a_2, w_2))(x_3, v_3)].
    \end{eqnarray*}
    This proves the first compatibility condition. Similarly, by using the identities (\ref{17-iden}), (\ref{18-iden}), (\ref{19-iden}),(\ref{20-iden}) one can prove the second compatibility condition, namely,
    \begin{eqnarray*}
     && \rho \ltimes \alpha(\psi \ltimes \beta((a_1, w_1), (a_2, w_2))(x_1, v_1), (x_2, v_2))(a_3, w_3) \\
     &&+  \rho \ltimes \alpha(\psi \ltimes \beta((a_2, w_2), (a_3, w_3))(x_2, v_2), (x_1, v_1))(a_1, w_1) \\
     &&- \rho \ltimes \alpha(\psi \ltimes \beta((a_1, w_1), (a_3, w_3))(x_2, v_2), (x_1, v_1))(a_2, w_2) \\
     &=&[(a_1, w_1), (a_2, w_2)),\rho \ltimes \alpha((x_1, v_1), (x_2, v_2))(a_3, w_3)].
    \end{eqnarray*}
    for $(a_1, w_1), (a_2, w_2), (a_3, w_3) \in \mathfrak{h} \oplus W$ and $(x_1, v_1), (x_2, v_2) \in \mathfrak{g} \oplus V$. Thus, we are only left to show the last two compatibility conditions.
    For any $(x_1, v_1), (x_2, v_2), (x_3,v_3) \in \mathfrak{g} \oplus V$ and $(a_1, w_1), (a_2, w_2)\in \mathfrak{h} \oplus W$, we have
    \begin{eqnarray*}
    && \psi \ltimes \beta((a_1, w_1), (a_2, w_2))[(x_1, v_1), (x_2, v_2), (x_3,v_3)] \\
    &&+ \psi \ltimes \beta( \rho \ltimes \alpha( (x_2, v_2), (x_3,v_3))(a_1, w_1), (a_2, w_2))(x_1, v_1) \\
    &&+ \psi \ltimes \beta((a_1, w_1), \rho \ltimes \alpha((x_2, v_2), (x_3,v_3))(a_2, w_2))(x_1, v_1) \\
    &=& (\psi(a_1, a_2)[x_1, x_2, x_3], \psi_V(a_1, a_2)(\rho_V(x_1, x_2)v_3 + \rho_V(x_1, x_3)v_2 + \rho_V(x_2, x_3)v_1)\\
    &&+ (\beta(w_1, a_2)-\beta(w_2, a_1))[x_1, x_2, x_3]) + \big(\psi(\rho(x_2,x_3)a_1, a_2)x_1, \psi_V(\rho(x_2, x_3)a_1, a_2)v_1 \\
    &&+ \beta(\rho_W(x_2, x_3)w_1 + (\alpha(v_2, x_3)-\alpha(v_3, x_2))a_1, a_2)x_1-\beta(w_2, \rho(x_2, x_3)a_1)x_1 \big) \\
    &&+ \big( \psi(a_1, \rho(x_2,x_3)a_2)x_1, \psi_V(a_1, \rho(x_2,x_3)a_2)v_1 + \beta(w_1, \rho(x_2, x_3)a_2)x_1\\
    &&-\beta(\rho_W(x_2, x_3)w_2+(\alpha(v_2, x_3)-\alpha(v_3, x_2))a_2, a_1)x_1 \big) \\
    &=& \big([\psi(a_1, a_2)x_1, x_2, x_3], \rho_V(\psi(a_1, a_2)x_1, x_2)v_3 + \rho_V(\psi(a_1, a_2)x_1, x_3)v_2 \\
    &&+ \rho_V(x_2, x_3)\psi(a_1, a_2)x_1 + \rho_V(x_2, x_3)(\beta(w_1, a_2)-\beta(w_2, a_1))x_1 \big)\\
    &=& [\psi \ltimes \beta((a_1, w_1), (a_2, w_2))(x_1, v_1), (x_2, v_2), (x_3,v_3)].
    \end{eqnarray*}
     This proves the other compatibility condition. Similarly, by using the identities (\ref{21-iden}), (\ref{22-iden}), (\ref{23-iden}),(\ref{24-iden}) one can prove the last compatibility condition, namely,
     \begin{eqnarray*}
    && \rho \ltimes \alpha((x_1, v_1), (x_2, v_2))[(a_1, w_1), (a_2, w_2), (a_3,w_3)] \\
    &&+ \rho \ltimes \alpha( \psi \ltimes \beta( (a_2, w_2), (a_3,w_3))(x_1, v_1), (x_2, v_2))(a_1, w_1) \\
    &&+ \rho \ltimes \alpha((x_1, v_1), \psi \ltimes \beta((a_2, w_2), (a_3,w_3))(x_2, v_2))(a_1, w_1) \\
    &=& [\rho \ltimes \alpha((x_1, v_1), (x_2, v_2))(a_1, w_1), (a_2, w_2), (a_3,w_3)].
    \end{eqnarray*}
     for $(a_1, w_1), (a_2, w_2), (a_3, w_3) \in \mathfrak{h} \oplus W$ and $(x_1, v_1), (x_2, v_2) \in \mathfrak{g} \oplus V$.This completes the proof that $(\mathfrak{g} \ltimes V, \mathfrak{h} \ltimes W, \rho \ltimes \alpha, \psi \ltimes \beta)$ is a matched pair of 3-Lie algebras.
\end{proof}

   \section{Cohomology of a matched pair of 3-Lie algebras}\label{sec4}
In this section, we aim to define the cohomology of a matched pair of 3-Lie algebras. We define a low dimensional cohomology of a matched pair of 3-Lie algebras with coefficients in an arbitrary representation. %Finally, as an application of our cohomology, we study formal and finite-order deformations.
\subsection{Cohomology with coefficients in a representation} In this subsection, we introduce the cohomology of a matched pair of 3-Lie algebras with coefficients in a representation. Let $(\mathfrak{g}, \mathfrak{h}, \rho, \psi)$ be a matched pair of 3-Lie algebras and $(V, W, \alpha, \beta)$ be a representation of it. For each $k, l \geq 0$, we consider
\begin{align*}
    C^{k|l} (\mathfrak{g} , \mathfrak{h} ; V , W) := \mathrm{Hom} (\wedge^{k+1} \mathfrak{g} \otimes \wedge^l \mathfrak{h} , V) \oplus \mathrm{Hom} (\wedge^{k} \mathfrak{g} \otimes \wedge^{l+1} \mathfrak{h} , W).
\end{align*}
Next, given any $n \geq 0$, we define the space of $n$-cochains $ C^n (\mathfrak{g} , \mathfrak{h}; V , W)$ by
\begin{align*}
    &C^0 (\mathfrak{g} , \mathfrak{h}; V , W) = V \oplus W ~~~~ \text{ and } \\
    &C^{n \geq 1} (\mathfrak{g} , \mathfrak{h}; V , W) = C^{n-1|0}  (\mathfrak{g} , \mathfrak{h}; V , W) \oplus \cdots \oplus  C^{0|n-1}  (\mathfrak{g} , \mathfrak{h}; V , W).
\end{align*}
Let $C^{1} (\mathfrak{g} , \mathfrak{h}; V , W )=C^{0|0}  (\mathfrak{g} , \mathfrak{h}; V , W )\cong  \mathrm{Hom}(\mathcal{\mathfrak{g}}^{1,0},V) \oplus \mathrm{Hom}(\mathcal{\mathfrak{g}}^{0,1},W)\cong  \mathrm{Hom}(\mathfrak{g},V) \oplus \mathrm{Hom}(\mathfrak{h},W)$ and
 $C^{2} (\mathfrak{g} , \mathfrak{h}; V , W )=C^{1|0}  (\mathfrak{g} , \mathfrak{h}; V , W )\oplus  C^{0|1}  (\mathfrak{g} , \mathfrak{h}; V , W )\cong  \mathrm{Hom}(\mathcal{\mathfrak{g}}^{2,0},V) \oplus \mathrm{Hom}(\mathcal{\mathfrak{g}}^{1,1},W)\oplus \mathrm{Hom}(\mathcal{\mathfrak{g}}^{1,1},V)\oplus \mathrm{Hom}(\mathcal{\mathfrak{g}}^{0,2},W)\cong  \mathrm{Hom}(\mathfrak{g} \otimes \mathfrak{g},V) \oplus \mathrm{Hom}(\mathfrak{g} \otimes \mathfrak{h},W)\oplus\mathrm{Hom}(\mathfrak{h} \otimes \mathfrak{g},W)\oplus\mathrm{Hom}(\mathfrak{g} \otimes \mathfrak{h},V)\oplus \mathrm{Hom}(\mathfrak{h} \otimes \mathfrak{g},V)\oplus\mathrm{Hom}(\mathfrak{h} \otimes \mathfrak{h},W).$
 More precisely, the cochain complex is given by
\begin{equation} \label{eq:complex}
\begin{split}
 & \qquad    V \oplus W \stackrel{D_0}{\longrightarrow}\\
 &  \quad C^{0|0}  (\mathfrak{g} , \mathfrak{h}; V , W )\stackrel{D_1}{\longrightarrow}\\
 & \quad   C^{1|0}  (\mathfrak{g} , \mathfrak{h}; V , W )\oplus
     C^{0|1}  (\mathfrak{g} , \mathfrak{h}; V , W )\stackrel{D_2}{\longrightarrow}\\
  &\quad C^{2|0}  (\mathfrak{g} , \mathfrak{h}; V , W )\oplus
  C^{1|1}  (\mathfrak{g} , \mathfrak{h}; V , W )\oplus
   C^{0|2}  (\mathfrak{g} , \mathfrak{h}; V , W ) \stackrel{D_3}{\longrightarrow}\cdots
\end{split}
\end{equation}

For 1-cochain $ (N_1,N_2)\in  C^{1}  (\mathfrak{g} , \mathfrak{h}; V , W)$, we define the
  map $D_1$ by
%%%%%%%%%%%%%%%%%%%%%%%%%%%
\begin{eqnarray*}
D_1 (N_1,N_2)(x_1, x_2, x_3) &=& [x_1, N_1(x_2), x_3] - N_1([x_1, x_2, x_3]) + [N_1(x_1), x_2, x_3] + [x_1, x_2, N_1(x_3)], \notag\\
D_1 (N_1,N_2)(x_1, x_2, a)
&=& \rho(x_1, x_2) N_2(a) - N_2(\rho(x_1, x_2) a) +\alpha(N_1(x_1), x_2)a-\alpha(N_1(x_2), x_1)a,\notag\\
D_1 (N_1,N_2)(a_1, a_2, x)
&=& \psi(a_1, a_2) N_1(x) - N_1(\psi(a_1, a_2) x) + \beta(N_2(a_1), a_2)x-\beta(N_2(a_2), a_1)x,\notag\\
D_1 (N_1,N_2)(a_1, a_2, a_3) &=&  [a_1, N_2(a_2), a_3] - N_2([a_1, a_2, a_3]) + [N_2(a_1), a_2, a_3] + [a_1, a_2, N_2(a_3)] .
\end{eqnarray*}
Thus a 1-cocycle is $(N_1,N_2) \in \mathrm{Hom}(\mathfrak{g},V) \oplus \mathrm{Hom}(\mathfrak{h},W)$ such that
   \begin{eqnarray}
   &&[x_1, N_1(x_2), x_3] - N_1([x_1, x_2, x_3]) + [N_1(x_1), x_2, x_3] + [x_1, x_2, N_1(x_3)]=0,\label{1co-1}\\
   && \rho(x_1, x_2) N_2(a) - N_2(\rho(x_1, x_2) a) + \alpha(N_1(x_1), x_2)a-\alpha(N_1(x_2), x_1)a=0,\label{1co-2}\\
   &&\psi(a_1, a_2) N_1(x) - N_1(\psi(a_1, a_2) x) + \beta(N_2(a_1), a_2)x-\beta(N_2(a_2), a_1)x=0,\label{1co-3}\\
   &&[a_1, N_2(a_2), a_3] - N_2([a_1, a_2, a_3]) + [N_2(a_1), a_2, a_3] + [a_1, a_2, N_2(a_3)]=0,
   \end{eqnarray}
For a 2-cochain
$(\omega, \theta, \nu, \phi)\in  C^{2}  (\mathfrak{g} , \mathfrak{h}; V , W ), $
we define the map $D_2$ by
%%%%%%%%%%%%%%%%%%%%%%%%%%%
\begin{eqnarray*}
&&D_2(\omega, \theta, \nu, \phi)(x_1, x_2, x_3, x_4, x_5)\nonumber\\
&=&[ \omega (x_1, x_2, x_3), x_4, x_5] + [ x_3, \omega(x_1, x_2,x_4), x_5] + [ x_3, x_4, \omega(x_1, x_2, x_5)]  \nonumber\\
&&+ \omega ([x_1, x_2, x_3], x_4, x_5) + \omega (x_3, [x_1, x_2, x_4], x_5) + \omega (x_3, x_4, [x_1, x_2, x_5]) \nonumber\\
&&-[x_1, x_2, \omega(x_3, x_4, x_5)] - \omega(x_1, x_2, [x_3, x_4, x_5]) ,   \notag\\
&&D_2(\omega, \theta, \nu, \phi)(x_1, x_2, a_1, a_2, a_3)\nonumber\\
&=& [\nu(x_1, x_2)a_1 ,a_2, a_3] + [a_1, \nu(x_1, x_2)a_2, a_3] + [a_1, a_2, \nu(x_1, x_2)a_3] \nonumber\\
&&+ \theta(\rho(x_1, x_2)a_1, a_2, a_3) + \theta(a_1, \rho(x_1, x_2)a_2, a_3) + \theta(a_1, a_2, \rho(x_1, x_2)a_3)\nonumber\\
&&-\rho(x_1, x_2)\theta(a_1, a_2, a_3) - \nu(x_1, x_2)[a_1, a_2, a_3] ,\notag\\
&&D_2(\omega, \theta, \nu, \phi)(a_1, a_2, x_1, x_2, x_3)\nonumber\\
&=& [\phi(a_1, a_2)x_1 ,x_2, x_3] + [x_1, \phi(a_1, a_2)x_2, x_3] + [x_1, x_2, \phi(a_1, a_2)x_3] \nonumber\\
&&+ \omega(\psi(a_1, a_2)x_1, x_2, x_3) + \omega(x_1, \psi(a_1, a_2)x_2, x_3) + \omega(x_1, x_2, \psi(a_1, a_2)x_3)\nonumber\\
&&-\psi(a_1, a_2)\omega(x_1, x_2, x_3) - \phi(a_1, a_2)[x_1, x_2, x_3] ,\notag\\
&&D_2(\omega, \theta, \nu, \phi)( x_1, x_2, x_3, a_1, a_2)\nonumber\\
&=&\beta(\nu(x_1, x_3)a_2,a_1)x_2 + \phi(\rho(x_1, x_3)a_2, a_1)x_2 - \beta(\nu(x_2, x_3)a_2,a_1)x_1\nonumber\\
&&- \phi(\rho(x_2, x_3)a_2, a_1)x_1 + [x_1, x_2, \phi(a_1, a_2)] + \omega(x_1, x_2, \psi(a_1, a_2)x_3)\nonumber\\
&&-\beta(\nu(x_1, x_2)a_1,a_2)x_3 - \phi(\rho(x_1, x_2)a_1, a_2)x_3 ,\notag\\
&&D_2(\omega, \theta, \nu, \phi)( a_1, a_2, a_3, x_1, x_2)\nonumber\\
&=&\alpha(\phi(a_1, a_3)x_2,x_1)a_2 + \nu(\psi(a_1, a_3)x_2, x_1)a_2 - \alpha(\phi(a_2, a_3)x_2,x_1)a_1\nonumber\\
&&- \nu(\psi(a_2, a_3)x_2, x_1)a_1 + [a_1, a_2, \nu(x_1, x_2)] + \theta(a_1, a_2, \rho(x_1, x_2)a_3)\nonumber\\
&&-\alpha(\phi(a_1, a_2)x_1,x_2)a_3 - \nu(\psi(a_1, a_2)x_1, x_2)a_3 ,\notag\\
&&D_2(\omega, \theta, \nu, \phi)( a_1, a_2, x_1, x_2, x_3)\nonumber\\
&=&\psi(a_1, a_2)\omega(x_1, x_2, x_3) + \phi(a_1, a_2)[x_1, x_2, x_3] + \phi(\rho(x_2, x_3)a_1, a_2)x_1\nonumber\\
&&+\beta(\nu(x_2, x_3)a_1, a_2)x_1 + \phi(a_1, \rho(x_2, x_3)a_2)x_1 - \beta(a_1, \nu(x_2, x_3)a_2)x_1\nonumber\\
&&-\omega(\psi(a_1, a_2)x_1, x_2, x_3) - [\phi(a_1, a_2)x_1, x_2, x_3] ,\notag\\
&&D_2(\omega, \theta, \nu, \phi)( x_1, x_2, a_1, a_2, a_3)\nonumber\\
&=&\rho(x_1, x_2)\theta(a_1, a_2, a_3) + \nu(x_1, x_2)[a_1, a_2, a_3] + \nu(\psi(a_2, a_3)x_1, x_2)a_1\nonumber\\
&&+\alpha(\phi(a_2, a_3)x_1, x_2)a_1 + \nu(x_1, \psi(a_2, a_3)x_2)a_1 - \alpha(x_1, \phi(a_2, a_3)x_2)a_1\nonumber\\
&&-\theta(\rho(x_1, x_2)a_1, a_2, a_3) - [\nu(x_1, x_2)a_1, a_2, a_3] ,\notag\\
&&D_2(\omega, \theta, \nu, \phi)(a_1, a_2, a_3, a_4, a_5)\nonumber\\
&=& [ \theta (a_1, a_2, a_3), a_4, a_5] + [ a_3, \theta(a_1, a_2, a_4), a_5] + [ a_3, a_4, \theta(a_1, a_2, a_5)]  \nonumber\\
&&+ \theta ([a_1, a_2, a_3], a_4, a_5) + \theta (a_3, [a_1, a_2, a_4], a_5) + \theta (a_3, a_4, [a_1, a_2, a_5]) \nonumber\\
&&- [a_1, a_2, \theta(a_3, a_4, a_5)] - \theta(a_1, a_2, [a_3, a_4, a_5]) ,\notag
\end{eqnarray*}
Thus a 2-coboundary is $(\omega, \theta, \nu, \phi) \in C^{2} (\mathfrak{g} , \mathfrak{h}; V , W )$
such that $(\omega, \theta, \nu, \phi)=D_1(N_1,N_2)$, i.e.
   \begin{eqnarray}
   &&\omega(x_1, x_2, x_3)=[x_1, N_1(x_2), x_3] - N_1([x_1, x_2, x_3]) + [N_1(x_1), x_2, x_3] + [x_1, x_2, N_1(x_3)],\\
   &&\nu(x_1, x_2, a_1, a_2, a_3)=\rho(x_1, x_2) N_2(a) - N_2(\rho(x_1, x_2) a) + \alpha(N_1(x_1), x_2)a - \alpha(x_1, N_1(x_2))a,\\
   &&\phi(x_1, x_2, x_3, a_1, a_2)=\psi(a_1, a_2) N_1(x) - N_1(\psi(a_1, a_2) x) + \beta(N_2(a_1), a_2)x - \beta(a_1, N_2(a_2))x,\\
   &&\theta(a_1, a_2, a_3)=[a_1, N_2(a_2), a_3] - N_2([a_1, a_2, a_3]) + [N_2(a_1), a_2, a_3] + [a_1, a_2, N_2(a_3)],
   \end{eqnarray}
and a 2-cocycle is $(\mu_1,\nu_1,\rho_1,\psi_1) \in C^{2} (\mathfrak{g} , \mathfrak{h}; V , W )$ such that
\begin{eqnarray}
&[ \omega (x_1, x_2, x_3), x_4, x_5] + [ x_3, \omega(x_1, x_2,x_4), x_5] + [ x_3, x_4, \omega(x_1, x_2, x_5)]  \nonumber\\
&+ \omega ([x_1, x_2, x_3], x_4, x_5) + \omega (x_3, [x_1, x_2, x_4], x_5) + \omega (x_3, x_4, [x_1, x_2, x_5]) \nonumber\\
&-[x_1, x_2, \omega(x_3, x_4, x_5)] - \omega(x_1, x_2, [x_3, x_4, x_5])=0, \label{2co-1}\\
& [\nu(x_1, x_2)a_1 ,a_2, a_3] + [a_1, \nu(x_1, x_2)a_2, a_3] + [a_1, a_2, \nu(x_1, x_2)a_3] \nonumber\\
&+ \theta(\rho(x_1, x_2)a_1, a_2, a_3) + \theta(a_1, \rho(x_1, x_2)a_2, a_3) + \theta(a_1, a_2, \rho(x_1, x_2)a_3)\nonumber\\
&-\rho(x_1, x_2)\theta(a_1, a_2, a_3) - \nu(x_1, x_2)[a_1, a_2, a_3]=0,\label{2co-2}\\
& [\phi(a_1, a_2)x_1 ,x_2, x_3] + [x_1, \phi(a_1, a_2)x_2, x_3] + [x_1, x_2, \phi(a_1, a_2)x_3] \nonumber\\
&+\omega(\psi(a_1, a_2)x_1, x_2, x_3) + \omega(x_1, \psi(a_1, a_2)x_2, x_3) + \omega(x_1, x_2, \psi(a_1, a_2)x_3)\nonumber\\
&-\psi(a_1, a_2)\omega(x_1, x_2, x_3) - \phi(a_1, a_2)[x_1, x_2, x_3]=0,\label{2co-3}\\
&\beta(\nu(x_1, x_3)a_2,a_1)x_2 + \phi(\rho(x_1, x_3)a_2, a_1)x_2 - \beta(\nu(x_2, x_3)a_2,a_1)x_1\nonumber\\
&- \phi(\rho(x_2, x_3)a_2, a_1)x_1 + [x_1, x_2, \phi(a_1, a_2)] + \omega(x_1, x_2, \psi(a_1, a_2)x_3)\nonumber\\
&-\beta(\nu(x_1, x_2)a_1,a_2)x_3 - \phi(\rho(x_1, x_2)a_1, a_2)x_3 =0, \label{2co-4}\\
&\alpha(\phi(a_1, a_3)x_2,x_1)a_2 + \nu(\psi(a_1, a_3)x_2, x_1)a_2 - \alpha(\phi(a_2, a_3)x_2,x_1)a_1\nonumber\\
&- \nu(\psi(a_2, a_3)x_2, x_1)a_1 + [a_1, a_2, \nu(x_1, x_2)] + \theta(a_1, a_2, \rho(x_1, x_2)a_3)\nonumber\\
&-\alpha(\phi(a_1, a_2)x_1,x_2)a_3 - \nu(\psi(a_1, a_2)x_1, x_2)a_3=0, \label{2co-5}\\
&\psi(a_1, a_2)\omega(x_1, x_2, x_3) + \phi(a_1, a_2)[x_1, x_2, x_3] + \phi(\rho(x_2, x_3)a_1, a_2)x_1\nonumber\\
&+\beta(\nu(x_2, x_3)a_1, a_2)x_1 + \phi(a_1, \rho(x_2, x_3)a_2)x_1 - \beta(a_1, \nu(x_2, x_3)a_2)x_1\nonumber\\
&-\omega(\psi(a_1, a_2)x_1, x_2, x_3) - [\phi(a_1, a_2)x_1, x_2, x_3]=0, \label{2co-6}\\
&\rho(x_1, x_2)\theta(a_1, a_2, a_3) + \nu(x_1, x_2)[a_1, a_2, a_3] + \nu(\psi(a_2, a_3)x_1, x_2)a_1\nonumber\\
&+\alpha(\phi(a_2, a_3)x_1, x_2)a_1 + \nu(x_1, \psi(a_2, a_3)x_2)a_1 - \alpha(x_1, \phi(a_2, a_3)x_2)a_1\nonumber\\
&-\theta(\rho(x_1, x_2)a_1, a_2, a_3) - [\nu(x_1, x_2)a_1, a_2, a_3]=0, \label{2co-7}\\
& [ \theta (a_1, a_2, a_3), a_4, a_5] + [ a_3, \theta(a_1, a_2, a_4), a_5] + [ a_3, a_4, \theta(a_1, a_2, a_5)]  \nonumber\\
&+ \theta ([a_1, a_2, a_3], a_4, a_5) + \theta (a_3, [a_1, a_2, a_4], a_5) + \theta (a_3, a_4, [a_1, a_2, a_5]) \nonumber\\
&- [a_1, a_2, \theta(a_3, a_4, a_5)] - \theta(a_1, a_2, [a_3, a_4, a_5])=0, \label{2co-8}
\end{eqnarray}

Denote the set of 2-cocycles by $\mathcal{Z }^2_\mathrm{MPL} (\mathfrak{g} , \mathfrak{h}; V , W )$, the set of 2-coboundaries by  $ \mathcal{B}^2_\mathrm{MPL} (\mathfrak{g} , \mathfrak{h}; V , W )$ and the second cohomology group by $\mathcal{H}^2_\mathrm{MPL} (\mathfrak{g} , \mathfrak{h}; V , W)$.
It is easy to see that the set of 2-coboundaries is contained in the set of 2-cocycles.

\begin{defn}
   The second cohomology group of $(\mathfrak{g}, \mathfrak{h}, \rho, \psi)$ with coefficients in $(V, W, \alpha, \beta)$ is defined as the quotient
$$\mathcal{H}^2_\mathrm{MPL} (\mathfrak{g} , \mathfrak{h}; V , W )=\mathcal{Z }^2_\mathrm{MPL} (\mathfrak{g} , \mathfrak{h}; V , W )/\mathcal{B }^2_\mathrm{MPL} (\mathfrak{g} , \mathfrak{h}; V , W )$$
\end{defn}

\section{Infinitesimal deformations and abelian extensions}\label{sec5}

\subsection{Infinitesimal deformations} In this subsection, we study infinitesimal deformations of a matched pair of 3-Lie algebras $(\mathfrak{g}, \mathfrak{h}, \rho, \psi)$. Our main result shows that the set of all equivalence classes of infinitesimal deformations is classified by the second cohomology group $H^2_\mathrm{MPL} (\mathfrak{g}, \mathfrak{h}, \rho, \psi)$.

\begin{defn}
    Let $(\mathfrak{g}, \mathfrak{h}, \rho, \psi)$ be a matched pair of 3-Lie algebras. An {\em infinitesimal deformation} of $(\mathfrak{g}, \mathfrak{h}, \rho, \psi)$ is given by a quadruple $(\omega, \theta, \nu, \phi)$ of bilinear maps
    \begin{align*}
        \omega  : \mathfrak{g} \times \mathfrak{g} \times \mathfrak{g} \rightarrow \mathfrak{g}, \quad \theta : \mathfrak{h} \times \mathfrak{h} \times \mathfrak{h} \rightarrow \mathfrak{h}, \quad \nu : \mathfrak{g} \times \mathfrak{g} \times \mathfrak{h} \rightarrow \mathfrak{h} ~~~~ \text{ and } ~~~~ \phi : \mathfrak{h} \times \mathfrak{h}\times \mathfrak{g} \rightarrow \mathfrak{g}
    \end{align*}
    in which $\mu_1, \nu_1$ are skew-symmetric that makes the quadruple $\big( (\mathfrak{g}[t]/(t^2), \mu_t), ( \mathfrak{h}[t]/(t^2), \nu_t), \rho_t, \psi_t \big)$ into a matched pair of 3-Lie algebras over the ring ${\bf k}[t]/(t^2)$. Here the ${\bf k}[t]/(t^2)$-bilinear 3-Lie brackets (on $\mathfrak{g}[t]/(t^2)$ and $\mathfrak{h}[t]/(t^2)$, respectively) and the ${\bf k}[t]/(t^2)$-bilinear maps $\rho_t, \psi_t$ are given by
    \begin{align*}
        \mu_t (x_1, x_2, x_3) :=~& [x_1, x_2, x_3] + t \omega (x_1, x_2, x_3), \\
        \nu_t (a_1, a_2, a_3) :=~& [a_1, a_2, a_3] + t \theta (a_1, a_2, a_3),\\
        \rho_t(x_1, x_2) a_1 :=~& \rho(x_1, x_2) a_1 + t \nu(x_1, x_2) a_1,\\
        \psi_t(a_1, a_2) x_1 :=~& \psi(a_1, a_2) x_1 + t \phi (a_1, a_2) x_1,
    \end{align*}
    for $x_1, x_2, x_3 \in \mathfrak{g}$ and $a_1, a_2, a_3 \in \mathfrak{h}.$
\end{defn}

Let $(\omega, \theta, \nu, \phi)$ be an infinitesimal deformation of the matched pair of 3-Lie algebras $(\mathfrak{g}, \mathfrak{h}, \rho, \psi)$. Since $(\mathfrak{g}[t]/ (t^2), \mu_t)$ is a 3-Lie algebra over the ring ${\bf k}[t]/(t^2)$, it turns out that the skew-symmetric bilinear map $\omega: \mathfrak{g} \times \mathfrak{g} \times \mathfrak{g} \rightarrow \mathfrak{g}$ satisfies
\begin{align}\label{inf-1-eqn}
   &[ \omega (x_1, x_2, x_3), x_4, x_5] + [ x_3, \omega(x_1, x_2,x_4), x_5] + [ x_3, x_4, \omega(x_1, x_2, x_5)]  \nonumber\\
   &+ \omega ([x_1, x_2, x_3], x_4, x_5) + \omega (x_3, [x_1, x_2, x_4], x_5) + \omega (x_3, x_4, [x_1, x_2, x_5]) \nonumber\\
   &- [x_1, x_2, \omega(x_3, x_4, x_5)] - \omega(x_1, x_2, [x_3, x_4, x_5]) =0,
\end{align}
for all $x_1, x_2, x_3, x_4, x_5 \in \mathfrak{g}$. Similarly, $(\mathfrak{h}[t]/(t^2), \nu_t)$ is a 3-Lie algebra over the ring ${\bf k}[t]/(t^2)$ implies that the skew-symmetric bilinear map $\theta : \mathfrak{h} \times \mathfrak{h} \times \mathfrak{h} \rightarrow \mathfrak{h}$ satisfies
\begin{align}\label{inf-2-eqn}
   &[ \theta (a_1, a_2, a_3), a_4, a_5] + [ a_3, \theta(a_1, a_2, a_4), a_5] + [ a_3, a_4, \theta(a_1, a_2, a_5)]  \nonumber\\
   &+ \theta ([a_1, a_2, a_3], a_4, a_5) + \theta (a_3, [a_1, a_2, a_4], a_5) + \theta (a_3, a_4, [a_1, a_2, a_5]) \nonumber\\
   &- [a_1, a_2, \theta(a_3, a_4, a_5)] - \theta(a_1, a_2, [a_3, a_4, a_5]) =0,
\end{align}
for $a_1, a_2, a_3, a_4, a_5 \in \mathfrak{h}$.
Finally, the maps $\rho_t$ and $\psi_t$ satisfies the compatibility conditions(\ref{11})-(\ref{66}) of a matched pair of 3-Lie algebras. These conditions are respectively equivalent to
\begin{eqnarray}
&&\rho(x_1, x_2)\theta(a_1, a_2, a_3) + \nu(x_1, x_2)[a_1, a_2, a_3]\nonumber\\
&=& [\nu(x_1, x_2)a_1 ,a_2, a_3] + [a_1, \nu(x_1, x_2)a_2, a_3] + [a_1, a_2, \nu(x_1, x_2)a_3] \nonumber\\
&&+ \theta(\rho(x_1, x_2)a_1, a_2, a_3) + \theta(a_1, \rho(x_1, x_2)a_2, a_3) + \theta(a_1, a_2, \rho(x_1, x_2)a_3) \label{inf-3-eqn}
\end{eqnarray}
\begin{eqnarray}
&&\psi(a_1, a_2)\omega(x_1, x_2, x_3) + \phi(a_1, a_2)[x_1, x_2, x_3]\nonumber\\
&=& [\phi(a_1, a_2)x_1 ,x_2, x_3] + [x_1, \phi(a_1, a_2)x_2, x_3] + [x_1, x_2, \phi(a_1, a_2)x_3] \nonumber\\
&&+ \omega(\psi(a_1, a_2)x_1, x_2, x_3) + \omega(x_1, \psi(a_1, a_2)x_2, x_3) + \omega(x_1, x_2, \psi(a_1, a_2)x_3)\label{inf-4-eqn}
\end{eqnarray}
\begin{eqnarray}
&&\psi(\nu(x_1, x_2)a_1,a_2)x_3 + \phi(\rho(x_1, x_2)a_1, a_2)x_3\nonumber\\
&=&\psi(\nu(x_1, x_3)a_2,a_1)x_2 +\phi(\rho(x_1, x_3)a_2, a_1)x_2 - \psi(\nu(x_2, x_3)a_2,a_1)x_1\nonumber\\
&&- \phi(\rho(x_2, x_3)a_2, a_1)x_1 + [x_1, x_2, \phi(a_1, a_2)x_3] + \omega(x_1, x_2, \psi(a_1, a_2)x_3)\label{inf-5-eqn}
\end{eqnarray}
\begin{eqnarray}
&&\rho(\phi(a_1, a_2)x_1,x_2)a_3 + \nu(\psi(a_1, a_2)x_1, x_2)a_3\nonumber\\
&=&\rho(\phi(a_1, a_3)x_2,x_1)a_2 + \nu(\psi(a_1, a_3)x_2, x_1)a_2 - \rho(\phi(a_2, a_3)x_2,x_1)a_1\nonumber\\
&&- \nu(\psi(a_2, a_3)x_2, x_1)a_1 + [a_1, a_2, \nu(x_1, x_2)a_3] + \theta(a_1, a_2, \rho(x_1, x_2)a_3)\label{inf-6-eqn}
\end{eqnarray}
\begin{eqnarray}
&&\omega(\psi(a_1, a_2)x_1, x_2, x_3) + [\phi(a_1, a_2)x_1, x_2, x_3]\nonumber\\
&=&\psi(a_1, a_2)\omega(x_1, x_2, x_3) + \phi(a_1, a_2)[x_1, x_2, x_3] + \phi(\rho(x_2, x_3)a_1, a_2)x_1\nonumber\\
&&+\psi(\nu(x_2, x_3)a_1, a_2)x_1 + \phi(a_1, \rho(x_2, x_3)a_2)x_1 + \psi(a_1, \nu(x_2, x_3)a_2)x_1\label{inf-7-eqn}
\end{eqnarray}
\begin{eqnarray}
&&\theta(\rho(x_1, x_2)a_1, a_2, a_3) + [\nu(x_1, x_2)a_1, a_2, a_3]\nonumber\\
&=&\rho(x_1, x_2)\theta(a_1, a_2, a_3) + \nu(x_1, x_2)[a_1, a_2, a_3] + \nu(\psi(a_2, a_3)x_1, x_2)a_1\nonumber\\
&&+\rho(\phi(a_2, a_3)x_1, x_2)a_1 + \nu(x_1, \psi(a_2, a_3)x_2)a_1 + \rho(x_1, \phi(a_2, a_3)x_2)a_1\label{inf-8-eqn}
\end{eqnarray}
for all $x_1, x_2, x_3 \in \mathfrak{g}$ and $a_1, a_2, a_3     \in \mathfrak{h}$.

Let $(\omega, \theta, \nu, \phi)$ and $(\omega', \theta', \nu', \phi')$ be two infinitesimal deformations of a matched pair of 3-Lie algebras $(\mathfrak{g}, \mathfrak{h}, \rho, \psi)$. They are said to be {\em equivalent} (we write $(\omega, \theta, \nu, \phi) \sim (\omega', \theta', \nu', \phi')$) if there exist linear maps $f: \mathfrak{g} \rightarrow \mathfrak{g}$ and $g: \mathfrak{h} \rightarrow \mathfrak{h}$ such that the pair $(\mathrm{id}_\mathfrak{g} + t f, \mathrm{id}_\mathfrak{h} + t g )$ defines a morphism of matched pairs of 3-Lie algebras from
\begin{align*}
   ( (\mathfrak{g}[t]/(t^2) , \mu_t), (\mathfrak{h}[t]/(t^2), \nu_t ), \rho_t, \psi_t) \quad \text{ to } \quad ( (\mathfrak{g}[t]/(t^2), \mu_t') , (\mathfrak{h}[t]/(t^2), \nu_t'), \rho'_t, \psi'_t).
\end{align*}

Suppose $(\omega, \theta, \nu, \phi)$ and $(\omega', \theta', \nu', \phi')$ are two equivalent infinitesimal deformations. Since the map $\mathrm{id}_\mathfrak{g} + t  f : ( \mathfrak{g}[t]/ (t^2), \mu_t) \rightarrow ( \mathfrak{g}[t]/ (t^2), \mu'_t)$ is a morphism of 3-Lie algebras over the ring ${\bf k}[t]/(t^2)$, it follows that
\begin{align}\label{inf-7-eqn}
    \omega (x_1, x_2, x_3) - \omega' (x_1, x_2, x_3) = [x_1, f(x_2), x_3] - f ([x_1, x_2, x_3]) + [f(x_1), x_2, x_3] + [x_1, x_2, f(x_3)],
\end{align}
  for all $ x_1, x_2, x_3 \in \mathfrak{g} $.\\
Similarly, the map $\mathrm{id}_\mathfrak{h} + t  g : ( \mathfrak{h}[t]/ (t^2), \nu_t) \rightarrow ( \mathfrak{h}[t]/ (t^2), \nu'_t)$ is a morphism of 3-Lie algebras over the ring ${\bf k}[t]/(t^2)$ implies that
\begin{align}\label{inf-8-eqn}
  \theta (a_1, a_2, a_3) - \theta' (a_1, a_2, a_3) = [a_1, g(a_2), a_3] - g ([a_1, a_2, a_3]) + [g(a_1), a_2, a_3] + [a_1, a_2, g(a_3)],
\end{align}
 for all $ a_1, a_2, a_3 \in \mathfrak{h}$.\\
Finally, we also have the identities $ (\mathrm{id}_\mathfrak{h} + t  g) ( \rho_t(x_1, x_2) a) = \rho_t'{(\mathrm{id}_\mathfrak{g} + t  f)(x_1, x_2)} (\mathrm{id}_\mathfrak{h} + t  g)(a)$ and $ (\mathrm{id}_\mathfrak{g} + t  f) ( \psi_t(a_1, a_2) x) = \psi_t'{(\mathrm{id}_\mathfrak{h} + t g)(a_1, a_2)} (\mathrm{id}_\mathfrak{g} + t  f)(x)$, for all $x \in \mathfrak{g}$ and $h \in \mathfrak{h}$. These conditions are respectively equivalent to
\begin{align}
    \nu(x_1, x_2) a - \nu'(x_1, x_2) a =~& \rho(x_1, x_2) g (a) - g (\rho(x_1, x_2) a) + \rho(f(x_1), f(x_2))a, \label{inf-9-eqn}\\
    \phi_1(a_1, a_2) x - \phi_1'(a_1, a_2) x =~& \psi(a_1, a_2) f (x) - f (\psi(a_1, a_2) x) + \psi(g(a_1), g(a_2))x,, \label{inf-10-eqn}
\end{align}
for all $x, x_1, x_2 \in \mathfrak{g}$ and $a, a_1, a_2 \in \mathfrak{h}$.
The conditions (\ref{inf-1-eqn})-(\ref{inf-8-eqn}) can be equivalently written as
\begin{align*}
    (\omega, \theta, \nu, \phi) - (\omega', \theta', \nu', \phi') = D_1 ((f,g)),
\end{align*}

This implies that the $2$-cocycles $ (\omega, \theta, \nu, \phi)$ and $(\omega', \theta', \nu', \phi')$ are cohomologous. Hence, we obtain the following result.

\begin{thm}
    Let $(\mathfrak{g}, \mathfrak{h}, \rho, \psi)$ be a matched pair of 3-Lie algebras. Then there is a bijection
    \begin{align*}
        \{ \text{the set of all infinitesimal deformations of } (\mathfrak{g}, \mathfrak{h}, \rho, \psi)\}/\sim ~ ~ \longleftrightarrow ~ H^2_\mathrm{MPL} (\mathfrak{g}, \mathfrak{h}, \rho, \psi).
    \end{align*}
\end{thm}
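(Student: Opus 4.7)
My plan is to define a map
\[
\Phi : \{ \text{infinitesimal deformations of }(\mathfrak{g},\mathfrak{h},\rho,\psi)\}/\sim \;\longrightarrow\; H^2_{\mathrm{MPL}}(\mathfrak{g},\mathfrak{h},\rho,\psi),
\]
sending the class of a deformation $(\omega,\theta,\nu,\phi)$ to its cohomology class, and then prove that $\Phi$ is well-defined and bijective. Here the target cohomology is taken with coefficients in the adjoint representation, i.e.\ with $V=\mathfrak{g}$, $W=\mathfrak{h}$, the actions $\rho_V,\psi_V,\rho_W,\psi_W$ given by $\rho,\psi$ and the 3-Lie brackets, and the pairings $\alpha,\beta$ induced from $\rho,\psi$ themselves.

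The first step is to check that any infinitesimal deformation is a 2-cocycle. Expanding the axioms in Theorem \ref{thm-bicross} (equivalently, conditions (\ref{11})--(\ref{66})) for the ${\bf k}[t]/(t^2)$-bilinear data $(\mu_t,\nu_t,\rho_t,\psi_t)$, the $t^0$-terms vanish because $(\mathfrak{g},\mathfrak{h},\rho,\psi)$ is already a matched pair, and the $t^1$-terms produce exactly eight relations. Two of them come from the 3-Lie identities for $\mu_t$ and $\nu_t$, and are precisely (\ref{inf-1-eqn}) and (\ref{inf-2-eqn}); the remaining six come from the compatibility conditions (\ref{11})--(\ref{66}) applied to $\rho_t,\psi_t$, and give (\ref{inf-3-eqn})--(\ref{inf-8-eqn}) (after collecting the cross terms that involve $\omega,\theta$). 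A term-by-term comparison with the coboundary operator $D_2$ defined in Section \ref{sec4} shows that these eight relations are literally the eight cocycle equations (\ref{2co-1})--(\ref{2co-8}) in the adjoint case, where $\rho_V,\psi_W$ are replaced by the adjoint brackets and $\alpha,\beta$ by $\rho,\psi$. Hence $(\omega,\theta,\nu,\phi)\in\mathcal Z^2_{\mathrm{MPL}}$.

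Next I check that equivalent deformations yield cohomologous cocycles. Writing out what it means for $(\mathrm{id}_{\mathfrak{g}}+tf,\mathrm{id}_{\mathfrak{h}}+tg)$ to be a morphism of matched pairs from $(\mu_t,\nu_t,\rho_t,\psi_t)$ to $(\mu_t',\nu_t',\rho_t',\psi_t')$ and collecting the $t^1$-coefficients yields exactly (\ref{inf-7-eqn})--(\ref{inf-10-eqn}). Comparing with the formulas for $D_1$ in Section \ref{sec4}, these four identities say precisely
\[
(\omega,\theta,\nu,\phi)-(\omega',\theta',\nu',\phi') \;=\; D_1\bigl((f,g)\bigr),
\]
so $\Phi$ is well-defined on equivalence classes. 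Conversely, given any 1-cochain $(f,g)$, the same computation shows that setting $(\omega',\theta',\nu',\phi'):=(\omega,\theta,\nu,\phi)-D_1((f,g))$ produces a deformation equivalent to $(\omega,\theta,\nu,\phi)$ via $(\mathrm{id}+tf,\mathrm{id}+tg)$; this yields injectivity of $\Phi$.

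For surjectivity, given any 2-cocycle $(\omega,\theta,\nu,\phi)\in\mathcal Z^2_{\mathrm{MPL}}(\mathfrak{g},\mathfrak{h};\mathfrak{g},\mathfrak{h})$ (adjoint coefficients), define $\mu_t,\nu_t,\rho_t,\psi_t$ by the formulas in the definition of an infinitesimal deformation. All matched-pair axioms must be verified modulo $t^2$; the $t^0$-parts hold by assumption on $(\mathfrak{g},\mathfrak{h},\rho,\psi)$, and the $t^1$-parts are exactly the eight cocycle equations (\ref{2co-1})--(\ref{2co-8}). Since $t^2=0$ in ${\bf k}[t]/(t^2)$, there are no higher-order obstructions, and $(\omega,\theta,\nu,\phi)$ really is an infinitesimal deformation mapping to the given class. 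The main obstacle I anticipate is purely bookkeeping: making the identification between the eight deformation conditions and the eight cocycle equations rigorous requires fixing once and for all what $\rho_V,\psi_V,\rho_W,\psi_W,\alpha,\beta$ are in the adjoint representation (so that mixed terms like $\beta(\nu(x,x')a,a')x''$ match $\psi(\nu(x,x')a,a')x''$), which is a notational — not conceptual — hurdle.
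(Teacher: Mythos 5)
Your proposal follows essentially the same route as the paper: expand the matched-pair axioms for $(\mu_t,\nu_t,\rho_t,\psi_t)$ in powers of $t$, identify the $t^1$-coefficients with the eight cocycle equations (\ref{2co-1})--(\ref{2co-8}) in the adjoint coefficients, and identify the equivalence of deformations via $(\mathrm{id}_{\mathfrak{g}}+tf,\mathrm{id}_{\mathfrak{h}}+tg)$ with the coboundary relation $(\omega,\theta,\nu,\phi)-(\omega',\theta',\nu',\phi')=D_1((f,g))$. You are in fact slightly more complete than the paper, which leaves the injectivity and surjectivity of the correspondence implicit and also never pins down the adjoint coefficient module explicitly, as you correctly do.
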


\medskip

\subsection{Abelian extensions} This subsection is dedicated to exploring abelian extensions of a specified matched pair of 3-Lie algebras through a given representation. We aim to illustrate that the isomorphism classes of these abelian extensions correspond to elements of the second cohomology group of the matched pair of 3-Lie algebras, with coefficients in the representation.

Consider a matched pair of 3-Lie algebras, \((\mathfrak{g},\mathfrak{h},\rho,\psi)\), along with a pair of vector spaces \((V,W)\). It should be highlighted that the quadruple \((V,W,0,0)\) can be construed as a matched pair of 3-Lie algebras, characterized by trivial 3-Lie algebra structures on both \(V\) and \(W\).

\begin{defn}

To describe an abelian extension of a matched pair of 3-Lie algebras $(\mathfrak{g}, \mathfrak{h}, \rho, \psi)$ by a pair of vector spaces $((V,W)$, we consider a specific structure known as a short exact sequence. This sequence is given by:
\begin{align}\label{ab-ext}
\xymatrix{
0 \ar[r] & V \Join W \ar[r]^{i_1 \Join i_2}  &  \widehat{\mathfrak{g}} \Join \widehat{\mathfrak{h}} \ar[r]^{j_1 \Join j_2} & \mathfrak{g} \Join \mathfrak{h} \ar[r] & 0
}
\end{align}
\end{defn}
In this construction, $(\widehat{ \mathfrak{g}}, \widehat{\mathfrak{h}}, \widehat{\rho}, \widehat{\psi})$ represents a new matched pair of 3-Lie algebras. The mappings $i_1 \Join i_2$ and $j_1 \Join j_2$ are morphisms between these matched pairs of 3-Lie algebras, ensuring that the sequence is exact.

We begin by considering a section of the map $j_1 \Join j_2:  \widehat{\mathfrak{g}} \Join \widehat{\mathfrak{h}} \rightarrow \mathfrak{g} \Join \mathfrak{h}$, which is a crucial component in the study of the abelian extension described in (\ref{ab-ext}). This section is defined by a pair of linear maps, $s_1 : \mathfrak{g} \rightarrow \widehat{\mathfrak{g}}$ and $s_2 : \mathfrak{h} \rightarrow \widehat{\mathfrak{h}}$, that satisfy the conditions$j_1 \circ s_1 = \mathrm{id}_\mathfrak{g}$ and $j_2 \circ s_2 = \mathrm{id}_\mathfrak{h}$.

Given such a section $(s_1,s_2)$, we proceed to define a series of maps that play a pivotal role in understanding the structure of the abelian extension. These maps are:
\begin{align*}
    \rho_V : \mathfrak{g} \times \mathfrak{g} \times V \rightarrow V, \quad \rho_W : \mathfrak{g} \times \mathfrak{g} \times W \rightarrow W, \quad \psi_V : \mathfrak{h} \times \mathfrak{h} \times V \rightarrow V \quad \text{ and } \quad \psi_W : \mathfrak{h} \times \mathfrak{h} \times W \rightarrow W
\end{align*}
The definitions of these maps are as follows:
\begin{align*}
    \rho_V(x_1, x_2) v :=~& [s_1(x_1), s_1(x_2), v],   &&\rho_W(x_1, x_2) w := {\widehat{\rho}}(s_1 (x_1),s_1( x_2)) w,\\
    \psi_V(a_1, a_2) v :=~& {\widehat{\psi}}(s_2(a_1), s_2(a_2)) v,  &&\psi_W(a_1, a_2) w := [s_2 (a_1), s_2(a_2), w],
\end{align*}
where \(x_1,x_2 \in\mathfrak{g}\), \(a_1,a_2 \in\mathfrak{h}\), \(v \in V\) and \(w \in W\).

These maps \(\rho_V\) and \(\rho_W\) are shown to define representations of the 3-Lie algebra \(\mathfrak{g}\) on the vector spaces \(V\) and \(W\) respectively. Similarly, \(\psi_V\) and \(\psi_W\) define representations of the 3-Lie algebra \(\mathfrak{h}\) on the vector spaces \(V\) and \(W\).

Furthermore, we introduce additional maps:

$\alpha: V \times \mathfrak{g} \rightarrow Hom(\mathfrak{h}, W)$ and $\beta: W \times \mathfrak{h} \rightarrow Hom(\mathfrak{g}, V)$
defined by:
\begin{align*}
    \alpha(v, x) a := \widehat{\rho}(v, s_1(x))s_2 (a) \quad \text{ and } \quad \beta(w, a) x := \widehat{\psi}(w, s_2(a)) s_1 (x),
\end{align*}
for \(v\in V\), \(a\in\mathfrak{h}\), \(w\in W\) and \(x\in\mathfrak{g}\).

By leveraging the properties of the matched pair of 3-Lie algebras \((\widehat{\mathfrak{g}},\widehat{\mathfrak{h}},\widehat{\rho},\widehat{\psi})\), we can readily prove the following result.

\begin{prop}\label{rep-prop}
With the above notations, the quadruple \((V,W,\alpha,\beta)\) forms a representation of the matched pair of 3-Lie algebras \((\mathfrak{g},\mathfrak{h},\rho,\psi)\).
\end{prop}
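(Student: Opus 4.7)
The strategy is to verify each of the conditions in Definition \ref{defn-mpl-rep} by pulling back the matched-pair axioms of $(\widehat{\mathfrak{g}}, \widehat{\mathfrak{h}}, \widehat{\rho}, \widehat{\psi})$ along the section $(s_1, s_2)$. As a preparatory step, I would record the structural consequences of $V\Join W$ being an abelian matched pair realized as the kernel of $j_1\Join j_2$: since $V$ and $W$ are ideals of $\widehat{\mathfrak{g}}$ and $\widehat{\mathfrak{h}}$ with trivial induced 3-Lie structures, any 3-bracket with two or more inputs from $V$ (resp.\ $W$) vanishes, and compatibility of the embedding with $\widehat{\rho}, \widehat{\psi}$ further forces $\widehat{\rho}(v_1,v_2) = 0$, $\widehat{\psi}(w_1,w_2) = 0$, $\widehat{\rho}(v,\widehat{\mathfrak{g}})\widehat{\mathfrak{h}}\subseteq W$, and $\widehat{\psi}(w,\widehat{\mathfrak{h}})\widehat{\mathfrak{g}}\subseteq V$. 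These facts immediately imply that $\rho_V, \rho_W, \psi_V, \psi_W, \alpha, \beta$ are well-defined and independent of the choice of section, since any two sections differ by a map into $V$ or $W$, and such differences are killed by the abelian-ideal property.

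I would next establish that $V$ and $W$ are representations of each of $\mathfrak{g}$ and $\mathfrak{h}$. For example, the 3-Lie representation identity for $\rho_V$ comes from applying the 3-Jacobi identity in $\widehat{\mathfrak{g}}$ to a 5-tuple $(s_1(x_1), s_1(x_2), s_1(y_1), s_1(y_2), v)$: the discrepancies $s_1([\,\cdot\,,\,\cdot\,,\,\cdot\,])-[s_1(\cdot), s_1(\cdot), s_1(\cdot)]$ all lie in $V$ and, when they appear together with another $v$-entry inside a further bracket, vanish by the abelian-ideal property. The same argument, applied to the brackets on $\widehat{\mathfrak{h}}$ and to the two action maps $\widehat{\rho}, \widehat{\psi}$, covers the three remaining action maps.

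The bulk of the work is the verification of the 24 pairing identities. For each one, I would take the corresponding axiom (\ref{11})--(\ref{66}) of the extension $(\widehat{\mathfrak{g}}, \widehat{\mathfrak{h}}, \widehat{\rho}, \widehat{\psi})$, evaluate it on section images in which one or two entries are replaced by vectors in $V\cup W$, and split the resulting expansion into the section-valued part (which, after applying $j_1\Join j_2$, reproduces the corresponding axiom of $(\mathfrak{g},\mathfrak{h},\rho,\psi)$ and hence cancels modulo $V\oplus W$) and the $V/W$-valued part (which, once rewritten via the definitions of $\alpha, \beta, \rho_V, \psi_V, \rho_W, \psi_W$, is the target identity). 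A systematic matching yields, for instance, identities (\ref{1-iden})--(\ref{3-iden}) and (\ref{13-iden})--(\ref{15-iden}) from (\ref{11}) and (\ref{44}); identities (\ref{5-iden})--(\ref{7-iden}) and (\ref{17-iden})--(\ref{19-iden}) from (\ref{22}) and (\ref{55}); identities (\ref{9-iden}), (\ref{11-iden}), (\ref{12-iden}), (\ref{21-iden}), (\ref{23-iden}), (\ref{24-iden}) from (\ref{33}) and (\ref{66}); and the mixed identities (\ref{4-iden}), (\ref{8-iden}), (\ref{10-iden}), (\ref{16-iden}), (\ref{20-iden}), (\ref{22-iden}) from (\ref{22}) and (\ref{55}) with two entries drawn from $V\cup W$.

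The main obstacle is bookkeeping rather than conceptual difficulty: with 24 identities, six auxiliary maps, and the perfectly symmetric but opposite treatment of the $\mathfrak{g}$- and $\mathfrak{h}$-sides, one must carefully track sign conventions, permutations of arguments, and the placement of the substituted vectors among the section images. The identities (\ref{8-iden}), (\ref{10-iden}), (\ref{20-iden}), (\ref{22-iden}) are the most delicate, because they involve both $\alpha$ and $\beta$ simultaneously alongside 3-brackets, so that the error terms $s_1([x_1,x_2,x_3])-[s_1(x_1),s_1(x_2),s_1(x_3)]\in V$ and $s_2([a_1,a_2,a_3])-[s_2(a_1),s_2(a_2),s_2(a_3)]\in W$ enter nontrivially through the outer actions and must be correctly absorbed into the $\beta$- or $\alpha$-terms on the right-hand side. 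Once this matching is carried out in one representative case, the remaining cases follow by the same mechanical procedure.
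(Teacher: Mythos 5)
Your proposal is correct and follows the same route the paper intends: the paper offers no written proof beyond the remark that the claim follows from the properties of the matched pair $(\widehat{\mathfrak{g}}, \widehat{\mathfrak{h}}, \widehat{\rho}, \widehat{\psi})$, and your plan of expanding the axioms (\ref{11})--(\ref{66}) on section images with one or two entries drawn from $V$ and $W$, using the abelian-ideal vanishings to kill the error terms of the section, is exactly that verification carried out in more detail than the paper provides. Only your illustrative filing of a few identities under specific axioms is slightly off --- for instance (\ref{4-iden}) arises from (\ref{11}) applied with $W$-components in the $\widehat{\psi}$-arguments rather than from (\ref{22}) or (\ref{55}) with two substituted entries --- but since you present the matching as a schema and the mechanism is the right one, this does not affect the argument.
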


Now, consider another section \((s_1',s_2')\) of the map $j_1 \Join j_2$. For any \(x\in\mathfrak{g}\) and \(a\in\mathfrak{h}\), we observe the following relations:
\begin{align*}
    s_1 (x) - s_1' (x) \in \mathrm{ker} (j_1) = \mathrm{im} (i_1) \quad \text{ and } \quad s_2 (a) - s_2'(a)  \in \mathrm{ker} (j_2) = \mathrm{im} (i_2),
\end{align*}

Given these relations, we can analyze the impact on the structures \(\rho_V,\rho_W,\psi_V,\psi_W,\alpha,\beta\) induced on \(V\) and \(W\) by the section \((s_1',s_2')\). Specifically, we find that:
\begin{align*}
    \rho_V(x_1, x_2) v - \rho'_V(x_1, x_2) v =~& [ s_1 (x_1) - s_1' (x_1), s_1 (x_2) - s_1' (x_2), v] = 0,\\
    \rho_W(x_1, x_2) w - \rho'_W(x_1, x_2) w =~& \widehat{\rho}(s_1 (x_1) - s_1' (x_1), s_1 (x_2) - s_1' (x_2)) w = 0,\\
    \psi_V(a_1, a_2) v - \psi'_V(a_1, a_2) v =~& \widehat{\psi}(s_2 (a_1) - s_2' (a_1),s_2 (a_2) - s_2' (a_2) ) v = 0,\\
    \psi_W(a_1, a_2) w - \psi'_W(a_1, a_2) w =~& [ s_2 (a_1) - s_2' (a_1), s_2 (a_2) - s_2' (a_2), w] = 0,\\
    \alpha(v, x) a - \alpha'(v, x) a =~& \widehat{\rho}(v, s_1(x) - s'_1(x))(s_2 (a) - s_2'(a)) = 0,\\
    \beta_(w, a) x - \beta'(w, a) x =~& \widehat{\psi}(w, s_2 (a) - s_2'(a))(s_1 (x) - s_1'(x)) = 0.
\end{align*}
for all \(x_1,x_2\in\mathfrak{g}\), \(v\in V\) and similarly for the other structures.

This analysis reveals that the representation \((V,W,\alpha,\beta)\) is invariant under the choice of section. In other words, the specific section used to define the maps does not affect the resulting representation. This independence is a crucial property, ensuring that the representation is well-defined regardless of the section chosen.

\begin{defn}
In the context of abelian extensions of a matched pair of 3-Lie algebras \((\mathfrak{g},\mathfrak{h},\rho,\psi)\) by a pair of vector spaces \((V,W)\), we define the notion of isomorphism between two such extensions. Specifically, two abelian extensions are considered isomorphic if there exists an isomorphism $f \Join g : \widehat{\mathfrak{g}} \Join \widehat{\mathfrak{h}} \rightarrow \widehat{\mathfrak{g}}' \Join \widehat{\mathfrak{h}}'$ of matched pairs of 3-Lie algebras that makes the following diagram commute:

\begin{align}\label{abel-diag}
        \xymatrix{
0 \ar[r] & V \Join W \ar@{=}[d] \ar[r]^{i_1 \Join i_2}  &  \widehat{\mathfrak{g}} \Join \widehat{\mathfrak{h}} \ar[d]^{f \Join g} \ar[r]^{j_1 \Join j_2} & \mathfrak{g} \Join \mathfrak{h} \ar[r] \ar@{=}[d] & 0 \\
0 \ar[r] & V \Join W \ar[r]_{i_1' \Join i_2'}  &  \widehat{\mathfrak{g}}' \Join \widehat{\mathfrak{h}}' \ar[r]_{j_1' \Join j_2'} & \mathfrak{g} \Join \mathfrak{h} \ar[r] & 0.
}
    \end{align}
\end{defn}

Given a matched pair of 3-Lie algebras \((\mathfrak{g},\mathfrak{h},\rho,\psi)\) and a representation \((V,W,\alpha,\beta)\), we define \(\mathrm{Ext}(\mathfrak{g},\mathfrak{h};V,W)\) as the set of all isomorphism classes of abelian extensions of \((\mathfrak{g},\mathfrak{h},\rho,\psi)\) by the pair of vector spaces \((V,W)\), where the induced representation is the prescribed one. This construction allows us to systematically classify and study the structure of abelian extensions in this context.

\begin{thm}
  Consider a matched pair of 3-Lie algebras \((\mathfrak{g},\mathfrak{h},\rho,\psi)\) along with a representation \((V,W,\alpha,\beta)\). In this context, the isomorphism classes of abelian extensions of \((\mathfrak{g},\mathfrak{h})\) by \((V,W)\) can be classified by the second cohomology group \(H^2_{\mathrm{MPL}}(\mathfrak{g},\mathfrak{h};V,W)\).
\end{thm}

\begin{proof}

Let us start with an abelian extension given by  (\ref{ab-ext}). For any section \((s_1,s_2)\) of the map $j_1 \Join j_2$, we define the following maps:
$ \omega \in \mathrm {Hom} (\wedge^3\mathfrak{g},V) $,$ \theta \in \mathrm {Hom} (\wedge^3 \mathfrak{h},W) $, $ \nu \in \mathrm {Hom} (\mathfrak{g}\otimes \mathfrak{h},W)$, $ \phi \in \mathrm {Hom} (\mathfrak{h}\otimes \mathfrak{g},V)$
by the formulas:
 \begin{align*}
        \begin{cases}
            \omega (x_1, x_2, x_3) := [s_1(x_1), s_1(x_2), s_1(x_3)] - s_1 ([x_1, x_2, x_3]),\\
            \theta (a_1, a_2, a_3) := [s_2(a_1), s_2 (a_2), s_2(a_3)] - s_2 ([a_1, a_2, a_3]),
        \end{cases}\\
        \begin{cases}
        \nu (x_1, x_2)a_1 := \widehat{\rho}(s_1(x_1), s_1(x_2))s_2 (a_1) - s_2 (\rho(x_1, x_2)a_1),\\
            \phi (a_1, a_2)x_1 := \widehat{\psi}(s_2(a_1), s_2(a_2))s_1(x_1) - s_1 (\psi(a_1, a_2)x_1),
        \end{cases}
    \end{align*}
for \(x_1,x_2,x_3\in\mathfrak{g}\) and \(a_1,a_2,a_3\in\mathfrak{h}\).

By a straightforward calculation, it can be verified that the element \((\omega,\theta,\nu,\phi)\in C^2_{\mathrm{MPL}}(\mathfrak{g},\mathfrak{h};V,W)\) is a 2-cocycle. Thus, the abelian  (\ref{ab-ext}) gives rise to a cohomology class in \(H^2_{\mathrm{MPL}}(\mathfrak{g},\mathfrak{h};V,W)\). Moreover, this cohomology class is independent of the choice of the section.
 Let's now turn our attention to two abelian extensions that are considered equivalent, as depicted in diagram (\ref{abel-diag}). Suppose \((s_1,s_2)\) is a section of the map \(j_1\Join j_2\). Then, we observe the following relationships:
\[
j_1\circ(f\circ s_1)=j_1\circ s_1=\mathrm{id}{\mathfrak{g}},\quad j_2\circ(g\circ s_2)=j_2\circ s_2=\mathrm{id}{\mathfrak{h}}.
\]

These relationships imply that \((f\circ s_1,g\circ s_2)\) serves as a section for the map \(j_1\oplus j_2\). Now, let's consider the 2-cocycle \((\omega',\theta',\nu',\phi')\) associated with the second abelian extension and its section \((f\circ s_1,g\circ s_2)\). We can then derive the following:
\begin{align*}
        \omega' (x_1, x_2, x_3) =~& [ (f \circ s_1)(x_1), (f \circ s_1)(x_2), (f \circ s_1)(x_3)] - (f \circ s_1) ([x_1, x_2, x_3]  \\
        =~& f \big(  [s_1(x_1), s_1 (x_2), s_1(x_3)] - s_1 ([x_1, x_2, x_3]) \big) \\
        =~&  [s_1(x_1), s_1 (x_2), s_1(x_3)] - s_1 ([x_1, x_2, x_3]) \quad (\because ~ f|_V = \mathrm{id}_V) \\
        =~& \omega(x_1, x_2, x_3).
    \end{align*}

This derivation shows that the 2-cocycle \(\omega'\) corresponding to the second abelian extension is, in fact, the same as the original 2-cocycle \(\omega\).

In a similar manner, we can show that the other components of the 2-cocycles also match:
 $\nu' (x_1, x_2)a_1 = \nu (x_1, x_2)a_1$,  $\phi' (a_1, a_2)x_1 = \phi (a_1, a_2)x_1$ and $\theta' (a_1, a_2, a_3) = \theta (a_1, a_2, a_3)$.
Thus, we conclude that the 2-cocycles \((\omega,\theta,\nu,\phi)\) and \((\omega',\theta',\nu',\phi')\) are identical. This implies that they correspond to the same element in the cohomology group \(H^2_{\mathrm{MPL}}(\mathfrak{g},\mathfrak{h};V,W)\). Consequently, we have established a well-defined map:
\begin{align*}
    \Theta_1 : \mathrm{Ext}(\mathfrak{g}, \mathfrak{h}; V, W) \rightarrow H^2_\mathrm{MPL} (\mathfrak{g}, \mathfrak{h}; V, W).
\end{align*}

To construct a map in the opposite direction, we begin with a 2-cocycle \((\omega,\theta,\nu,\phi)\) from the cohomology group \(H^2_{\mathrm{MPL}}(\mathfrak{g},\mathfrak{h};V,W)\). Using this 2-cocycle, we define new structures that will help us build an abelian extension.

Specifically, we define new 3-Lie algebras \(\widehat{\mathfrak{g}}=\mathfrak{g}\oplus V\)and\(\widehat{\mathfrak{h}}=\mathfrak{h}\oplus W\). We then introduce the following maps:
\begin{align*}
    [~,~,~]_{\widehat{\mathfrak{g}}} : \widehat{\mathfrak{g}} \times \widehat{\mathfrak{g}}\times \widehat{\mathfrak{g}} \rightarrow \widehat{\mathfrak{g}}, \quad  [~,~,~]_{\widehat{\mathfrak{h}}} : \widehat{\mathfrak{h}} \times \widehat{\mathfrak{h}} \times \widehat{\mathfrak{h}} \rightarrow \widehat{\mathfrak{h}}, \quad \widehat{\rho} : \widehat{\mathfrak{g}} \times \widehat{\mathfrak{h}} \rightarrow \widehat{\mathfrak{h}} \quad \text{ and } \quad \widehat{\psi} : \widehat{\mathfrak{h}} \times \widehat{\mathfrak{g}} \rightarrow \widehat{\mathfrak{g}}
\end{align*}
by
\begin{align*}
     [(x_1, v_1), (x_2, v_2), (x_3, v_3)] =~& \big( [x_1, x_2, x_3] , \rho_V(x_1, x_2)v_3 + \rho_V(x_3, x_1)v_2 + \rho_V(x_2, x_3)v_1 + \omega (x_1, x_2, x_3) \big) \\
     [(a_1, w_1), (x_2, w_2), (a_3, w_3)] =~& \big( [a_1, a_2, a_3] , \psi_W(a_1, a_2)w_3 + \psi_W(a_3, a_1)w_2 + \psi_W(a_2, a_3)w_1 + \theta (a_1, a_2, a_3) \big) \\
     \widehat{\rho}((x_1, v_1), (x_2, v_2))(a_1, w_1) =~& \big( \rho(x_1, x_2)a_1, \rho_W(x_1, x_2)w_1 + (\alpha(v_1, x_2)-\alpha(v_2, x_1))a_1 + \nu (x_1, x_2)a_1 \big) \\
     \widehat{\psi}((a_1, w_1), (a_2, w_2))(x_1, v_1) =~& \big( \psi(a_1, a_2)x_1, \psi_V(a_1, a_2)v_1 + (\beta(w_1, a_2)-\beta(w_2, a_1))x_1 +\phi (a_1, a_2)x_1 \big)
\end{align*}
for $(x_1, v_1), (x_2, v_2), (x_3, v_3) \in \widehat{\mathfrak{g}}$ and $(a_1, w_1), (x_2, w_2), (a_3, w_3) \in \widehat{\mathfrak{h}}$.
These maps are constructed in such a way that \(\widehat{\mathfrak{g}}\) and \(\widehat{\mathfrak{h}}\) become 3-Lie algebras,and \(\widehat{\rho}\) and \(\widehat{\psi}\) define representations of \(\widehat{\mathfrak{g}}\) on \(\widehat{\mathfrak{h}}\) and vice versa. Additionally, these maps satisfy the necessary compatibility conditions, making \((\widehat{\mathfrak{g}},\widehat{\mathfrak{h}},\widehat{\rho},\widehat{\psi})\) a matched pair of 3-Lie algebras.

We then construct an abelian extension:
\begin{align*}
\xymatrix{
0 \ar[r] & V \Join W \ar[r]^{i_1 \Join i_2}  &  \widehat{\mathfrak{g}} \Join \widehat{\mathfrak{h}} \ar[r]^{j_1 \Join j_2} & \mathfrak{g} \Join \mathfrak{h} \ar[r] & 0
}
\end{align*}
where the maps \(i_1,i_2,j_1,\) and \(j_2\) are defined accordingly, where the above maps are given by $i_1 (v) = (0, v)$, $i_2 (w) = (0, w),$ $j_1 (x, v) = x$ and $j_2 (a, w) = a.$

Next, consider another 2-cocycle \((\omega',\theta',\nu',\phi')\) that is cohomologous to \((\omega,\theta,\nu,\phi)\). This means there exists a coboundary \(\delta_{\mathrm{MPL}}(T,\vartheta)\) such that:
\[
(\omega,\theta,\nu,\phi)-(\omega',\theta',\nu',\phi')=\delta_{\mathrm{MPL}}(T,\vartheta),
\]
for some \((T,\vartheta)\in C^1_{\mathrm{MPL}}(\mathfrak{g},\mathfrak{h};V,W)=\mathrm{Hom}(\mathfrak{g},V)\oplus\mathrm{Hom}(\mathfrak{h},W)\).

Using \((T,\vartheta)\), we define linear maps \(f\) and \(g\) that modify the elements of \(\widehat{\mathfrak{g}}\) and \(\widehat{\mathfrak{h}}\) respectively. These maps \(f\) and \(g\) induce an isomorphism between the abelian extensions constructed from \((\omega,\theta,\nu,\phi)\) and \((\omega',\theta',\nu',\phi')\).

Thus, we obtain a well-defined map:
\[
\Theta_2:H^2_{\mathrm{MPL}}(\mathfrak{g},\mathfrak{h};V,W)\rightarrow\mathrm{Ext}(\mathfrak{g},\mathfrak{h};V,W).
\]

Finally, it can be verified that the maps \(\Theta_1\) and \(\Theta_2\) are inverses of each other, completing the proof.
\end{proof}

\section{Inducibility of automorphisms of a matched pair of 3-Lie algebras}\label{sec6}
In this section, we study the inducibility of automorphisms of a matched pair of 3-Lie algebras and characterize them by equivalent conditions.
Assume that ${\mathfrak{g}} \Join {\mathfrak{h}}$ and $V \Join W$ are
two matched pairs of 3-Lie algebras. Let
  \begin{equation*}\mathcal{E}:\xymatrix{
0 \ar[r] & V \Join W \ar[r]^{i_1 \Join i_2}  &  \widehat{\mathfrak{g}} \Join \widehat{\mathfrak{h}} \ar[r]^{j_1 \Join j_2} & {\mathfrak{g}} \Join {\mathfrak{h}} \ar[r] & 0
}\end{equation*}
be an abelian extension of $ {\mathfrak{g}} \Join {\mathfrak{h}}$ by $V \Join W$
   with a section $(s_1,s_2)$ of $(j_1,j_2)$ and
 $(\omega, \theta, \nu,  \phi) $ be the corresponding abelian 2-cocycle.

 Denote
$$\mathrm{Aut}_{V \Join W}
(\widehat{\mathfrak{g}} \Join \widehat{\mathfrak{h}} )=\{\varUpsilon=(\gamma_1,\gamma_2)\in \mathrm{Aut} ( \widehat{\mathfrak{g}} \Join \widehat{\mathfrak{h}})\mid
\gamma_1
(V)\subseteq V,\gamma_2(W)\subseteq W\}.$$
\begin{prop} \label{Idc} The map $\varPhi:\mathrm{Aut}_{V \Join W}
(\widehat{\mathfrak{g}} \Join \widehat{\mathfrak{h}})\longrightarrow \mathrm{Aut} ({\mathfrak{g}} \Join {\mathfrak{h}})\times \mathrm{Aut} (V \Join W)$
given by
\begin{equation}\label{Ik}\varPhi(\varUpsilon)=(\bar{\varUpsilon},\varUpsilon|_{V \Join W})~~ with~~\bar{\varUpsilon}=(\bar{\gamma}_1,\bar{\gamma}_2)=(j_1 \gamma_1 s_1, j_2 \gamma_2 s_2)
,~~\varUpsilon|_{V \Join W}=(\gamma_{1}|_{V},\gamma_{2}|_{W})
\end{equation}
for all $\varUpsilon=(\gamma_1,\gamma_2)\in \mathrm{Aut}_{V \Join W}(\widehat{\mathfrak{g}} \Join \widehat{\mathfrak{h}})$ is well defined. Moreover, $\varPhi$ is a homomorphism of groups.
\end{prop}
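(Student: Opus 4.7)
My plan is to verify three things in order: (i) the restriction $\varUpsilon|_{V\Join W}$ lies in $\mathrm{Aut}(V\Join W)$; (ii) the induced pair $\bar{\varUpsilon}=(\bar{\gamma}_1,\bar{\gamma}_2)$ lies in $\mathrm{Aut}(\mathfrak g\Join\mathfrak h)$; and (iii) $\varPhi$ respects composition. Step (i) is essentially formal: the matched pair $V\Join W$ carries the zero structure, so any linear bijection of $V$ paired with a linear bijection of $W$ is automatically an automorphism of the matched pair. The bijectivity of $\gamma_1|_V$ and $\gamma_2|_W$ follows from the usual argument since $\gamma_1$ (resp.\ $\gamma_2$) is a bijection of $\widehat{\mathfrak g}$ (resp.\ $\widehat{\mathfrak h}$) preserving $V$ (resp.\ $W$).

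The bulk of the work is step (ii). First I will observe that $\bar{\gamma}_1$ is precisely the endomorphism induced by $\gamma_1$ on the quotient $\widehat{\mathfrak g}/V\cong\mathfrak g$, whence it is independent of the chosen section and inherits bijectivity from $\gamma_1$; the same holds for $\bar{\gamma}_2$ on $\widehat{\mathfrak h}/W\cong\mathfrak h$. To prove $\bar{\gamma}_1$ is a 3-Lie algebra homomorphism, I will begin from the cocycle identity $[s_1(x_1),s_1(x_2),s_1(x_3)]=s_1([x_1,x_2,x_3])+\omega(x_1,x_2,x_3)$, apply the 3-Lie homomorphism $\gamma_1$, and then $j_1$; since $\omega(x_1,x_2,x_3)\in V$ and $\gamma_1(V)\subseteq V=\ker j_1$, the correction term drops out and the identity $\bar{\gamma}_1([x_1,x_2,x_3])=[\bar{\gamma}_1(x_1),\bar{\gamma}_1(x_2),\bar{\gamma}_1(x_3)]$ emerges. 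The analogous argument with $\theta$ handles $\bar{\gamma}_2$. For the matched pair compatibility $\bar{\gamma}_2(\rho(x_1,x_2)a)=\rho(\bar{\gamma}_1(x_1),\bar{\gamma}_1(x_2))\bar{\gamma}_2(a)$ and its $\psi$-counterpart, I will invoke the cocycles $\nu,\phi$ relating $\widehat{\rho}(s_1(x_1),s_1(x_2))s_2(a)$ to $s_2(\rho(x_1,x_2)a)$ modulo $W$ (and analogously for $\widehat{\psi}$), use that $(\gamma_1,\gamma_2)$ preserves $\widehat{\rho}$ and $\widehat{\psi}$, and again annihilate the $W$-valued (resp.\ $V$-valued) error term via $j_2$ (resp.\ $j_1$) using $\gamma_2(W)\subseteq W$ (resp.\ $\gamma_1(V)\subseteq V$).

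For step (iii), multiplicativity on the $V\Join W$-component is immediate from restriction. For the $\mathfrak g\Join\mathfrak h$-component, given $\varUpsilon=(\gamma_1,\gamma_2)$ and $\varUpsilon'=(\gamma_1',\gamma_2')$ I will prove $j_1\gamma_1\gamma_1's_1=(j_1\gamma_1s_1)\circ(j_1\gamma_1's_1)$ by the standard trick: for each $x\in\mathfrak g$ the element $\gamma_1's_1(x)-s_1(j_1\gamma_1's_1(x))$ lies in $\ker j_1=V$ (because $j_1s_1=\mathrm{id}_\mathfrak g$), applying $\gamma_1$ keeps it in $V$, and a final application of $j_1$ annihilates it, giving precisely $\bar{\gamma}_1\bar{\gamma}_1'(x)$. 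An identical calculation yields the $\mathfrak h$-component. The principal obstacle is not conceptual but combinatorial—matching up the cocycle identities against the matched-pair morphism conditions in step (ii)—but every instance is governed by the uniform principle of absorbing a $V$- or $W$-valued correction into $\gamma$ and then killing it with $j$.
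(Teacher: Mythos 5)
Your proposal is correct and follows essentially the same route as the paper: viewing $\bar{\gamma}_1,\bar{\gamma}_2$ as the maps induced on the quotients $\widehat{\mathfrak{g}}/V$, $\widehat{\mathfrak{h}}/W$, transferring the bracket and the $\rho,\psi$ compatibilities via the cocycle identities for $\omega,\theta,\nu,\phi$, and killing the $V$- or $W$-valued correction terms with $j_1,j_2$ using $\gamma_1(V)\subseteq V$, $\gamma_2(W)\subseteq W$. In fact your treatment of multiplicativity in step (iii) is more explicit than the paper's, which dismisses it as "easy to check."
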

\begin{proof}
According to the case of 3-Lie algebras, $\bar{\gamma}_1$
is independent on the choice of sections and $\bar{\gamma}_1$ is an isomorphism of the 3-Lie algebras $\mathfrak{g}$.
By following the same procedure, we can prove that $\bar{\gamma}_2$
does not depend on the choice of sections and $\bar{\gamma}_2$ is bijective. Thus, $\bar{\gamma}$ is independent on the choice of sections. Thanks to $j_2(W)=0$ and $\varUpsilon \in \mathrm{Aut}_{V \Join W}(\widehat{\mathfrak{g}} \Join \widehat{\mathfrak{h}})$, we obtain $j_2 \gamma_2(W)=0$ and $\gamma_2({\widehat{\rho}}(s_1(x_1), s_1(x_2))(s_2(a))={\widehat{\rho}}(\gamma_1 s_1(x_1),\gamma_1 s_1(x_2))(\gamma_2 s_2(a))$.  By direct computations, we have
 \begin{eqnarray*}
 &&\bar{\gamma}_2 (\rho(x_1, x_2)a)\\
&=&j_2 \gamma_2 s_2 (\rho(x_1, x_2)a) \\
&=&j_2 \gamma_2({\widehat{\rho}}(s_1(x_1), s_1(x_2))(s_2(a)) -\nu(x_1, x_2)a)\\
&=&j_2 \gamma_2({\widehat{\rho}}(s_1(x_1), s_1(x_2))(s_2(a)))\\
&=&j_2 {\widehat{\rho}}(\gamma_1 s_1(x_1),\gamma_1 s_1(x_2))(\gamma_2 s_2(a))\\
&=&{\rho}(j_1 \gamma_1 s_1(x_1),j_1 \gamma_1 s_1(x_2) )(j_2 \gamma_2 s_2(a)),
 \end{eqnarray*}
which indicates that  $\bar{\gamma}_2 (\rho(x_1, x_2)a) = {\rho}(\bar{\gamma}_1(x_1),\bar{\gamma}_1(x_2) )(\bar{\gamma}_2(a)).$
 Analogously, we can get  $\bar{\gamma}_1 (\psi(a_1, a_2)x) = {\psi}(\bar{\gamma}_2(a_1),\bar{\gamma}_2(a_2) )(\bar{\gamma}_1(x)).$
These equalities show that, $\bar{\varUpsilon}=(\bar{\gamma}_1,\bar{\gamma}_2)$ is an isomorphism of the matched pair of 3-Lie algebras $\mathfrak{g} \Join \mathfrak{h}$.
It is easy to check that  $\varPhi$ is a homomorphism of groups. This completes the proof.
\end{proof}

A pair $(\alpha,\beta)\in \mathrm{Aut} (\mathfrak{g} \Join \mathfrak{h})\times \mathrm{Aut}(V \Join W)$ is said to
  be inducible if $(\alpha,\beta)$ is an image of $\varPhi$. It is natural to ask: when a pair $(\alpha,\beta)$ is
inducible. We discuss this theme in the following.

\begin{thm} \label{Idu}Assume that $\mathfrak{g} \Join \mathfrak{h}$ and $V \Join W$ are
two matched pairs of 3-Lie algebras. Let
  $\mathcal{E}:\xymatrix{
0 \ar[r] & V \Join W \ar[r]^{i_1 \Join i_2}  &  \widehat{\mathfrak{g}} \Join \widehat{\mathfrak{h}} \ar[r]^{j_1 \Join j_2} & \mathfrak{g} \Join \mathfrak{h} \ar[r] & 0
}$
be an abelian extension of $\mathfrak{g} \Join \mathfrak{h}$ by $V \Join W$
   with a section $(s_1,s_2)$ of $(j_1,j_2)$ and
 $(\omega, \theta, \nu,  \phi) $ be the corresponding abelian 2-cocycle induced by
$(s_1,s_2)$.
 A pair $(\alpha,\beta)\in \mathrm{Aut}(\mathfrak{g} \Join \mathfrak{h})\times \mathrm{Aut}(V \Join W)$ is inducible if and only if there are
linear maps $\zeta:\mathfrak{g}\longrightarrow V$ and $\eta:\mathfrak{h}\longrightarrow W$
satisfying the following conditions:
\begin{align}
&\beta_1(\omega(x_1,x_2, x_3))-\omega(\alpha_1(x_1),\alpha_1(x_2), \alpha_1(x_3))=[\zeta(x_1), \alpha_1(x_2), \zeta(x_3)] \nonumber\\
&-\zeta([x_1, x_2, x_3]) + [\alpha_1(x_1),\zeta(x_2), \zeta(x_3)]+[\zeta(x_1), \zeta(x_2), \alpha_1(x_3)],\label{Iam1}\\
&\beta_2(\theta(a_1, a_2, a_3)) -\theta(\alpha_2(a_1), \alpha_2(a_2), \alpha_2(a_3)) = [\eta(a_1), \alpha_2(a_2), \eta(a_3)] \nonumber\\
&- \eta([a_1, a_2, a_3]) + [\alpha_2(a_1), \eta(a_2), \eta(a_3)] + [\eta(a_1), \eta(a_2), \alpha_2(a_3)],\label{Iam2}\\
&\beta_2(\nu(x_1, x_2) a) - \nu(\alpha_1(x_1),\alpha_1(x_2))(\alpha_2(a)) = \rho(\alpha_1(x_1), \alpha_1(x_2)) \eta(a) \nonumber\\
&-\eta (\rho(x_1, x_2) a) + \alpha(\zeta(x_1), \alpha_1(x_2))(\alpha_2(a))-\alpha(\zeta(x_2), \alpha_1(x_1))(\alpha_2(a)), \label{Iam3}\\
&\beta_1(\phi(a_1, a_2) x) - \phi(\alpha_2(a_1), \alpha_2(a_2)) (\alpha_1(x)) = \psi(\alpha_2(a_1), \alpha_2(a_2)) \zeta (x) \nonumber\\
&- \zeta(\psi(a_1, a_2) x) + \beta(\eta(a_1), \beta(a_2))(\alpha_1(x))-\beta(\eta(a_2), \beta(a_1))(\alpha_1(x)) \label{Iam4}
\end{align}
\end{thm}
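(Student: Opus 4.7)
The plan is to establish both directions of the equivalence by setting up an explicit bijective correspondence between elements of $\mathrm{Aut}_{V\Join W}(\widehat{\mathfrak{g}}\Join\widehat{\mathfrak{h}})$ lying above $(\alpha,\beta)$ and pairs $(\zeta,\eta)$ of linear maps. For the forward direction, assume $(\alpha,\beta)=\varPhi(\varUpsilon)$ for some $\varUpsilon=(\gamma_1,\gamma_2)$. Since $j_1\gamma_1 s_1=\alpha_1$ and $j_2\gamma_2 s_2=\alpha_2$, the differences $\gamma_1(s_1(x))-s_1(\alpha_1(x))$ and $\gamma_2(s_2(a))-s_2(\alpha_2(a))$ lie in $\ker j_1=V$ and $\ker j_2=W$ respectively, so I can define
\[
\zeta(x):=\gamma_1(s_1(x))-s_1(\alpha_1(x)),\qquad \eta(a):=\gamma_2(s_2(a))-s_2(\alpha_2(a)).
\]
Then I would expand both sides of $\gamma_1([s_1(x_1),s_1(x_2),s_1(x_3)]_{\widehat{\mathfrak{g}}})=[\gamma_1 s_1(x_1),\gamma_1 s_1(x_2),\gamma_1 s_1(x_3)]_{\widehat{\mathfrak{g}}}$ using the definition $[s_1(x_1),s_1(x_2),s_1(x_3)]_{\widehat{\mathfrak{g}}}=s_1([x_1,x_2,x_3])+\omega(x_1,x_2,x_3)$ and the analogous decomposition $\gamma_1 s_1(x_i)=s_1(\alpha_1(x_i))+\zeta(x_i)$, collect the terms projecting into $V$, and read off \eqref{Iam1}; the same manipulation on $\widehat{\mathfrak{h}}$ produces \eqref{Iam2}, while applying $\gamma_2$ to $\widehat{\rho}(s_1(x_1),s_1(x_2))(s_2(a))=s_2(\rho(x_1,x_2)a)+\nu(x_1,x_2)a$ and $\gamma_1$ to the $\widehat{\psi}$-analogue yields \eqref{Iam3} and \eqref{Iam4}.

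For the converse direction, suppose $(\zeta,\eta)$ satisfies \eqref{Iam1}--\eqref{Iam4}. Using the vector-space decompositions $\widehat{\mathfrak{g}}=s_1(\mathfrak{g})\oplus V$ and $\widehat{\mathfrak{h}}=s_2(\mathfrak{h})\oplus W$ I would define
\[
\gamma_1(s_1(x)+v):=s_1(\alpha_1(x))+\zeta(x)+\beta_1(v),\qquad \gamma_2(s_2(a)+w):=s_2(\alpha_2(a))+\eta(a)+\beta_2(w).
\]
Bijectivity of $\gamma_1,\gamma_2$ follows from invertibility of $\alpha_i,\beta_i$ and the upper-triangular form with respect to these decompositions; by construction $\gamma_1|_V=\beta_1$, $\gamma_2|_W=\beta_2$, $j_1\gamma_1 s_1=\alpha_1$ and $j_2\gamma_2 s_2=\alpha_2$, so once $(\gamma_1,\gamma_2)$ is shown to lie in $\mathrm{Aut}_{V\Join W}(\widehat{\mathfrak{g}}\Join\widehat{\mathfrak{h}})$ we will automatically have $\varPhi(\gamma_1,\gamma_2)=(\alpha,\beta)$.

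The main obstacle is verifying the full matched-pair morphism property of $(\gamma_1,\gamma_2)$, which splits into four independent checks: that $\gamma_1$ preserves $[-,-,-]_{\widehat{\mathfrak{g}}}$, that $\gamma_2$ preserves $[-,-,-]_{\widehat{\mathfrak{h}}}$, and that the pair intertwines $\widehat{\rho}$ and $\widehat{\psi}$ as required by \eqref{mpl-mor-1}--\eqref{mpl-mor-2}. Each check is carried out by expanding both sides in the $s_i\oplus(-)$ decomposition: the purely-$\mathfrak{g}$ or purely-$\mathfrak{h}$ projections reduce to the fact that $\alpha_1,\alpha_2$ are morphisms of the matched pair $\mathfrak{g}\Join\mathfrak{h}$, while the projections into $V$ and $W$ produce the $\omega,\theta,\nu,\phi$-cocycle obstructions, which are cancelled exactly by \eqref{Iam1}, \eqref{Iam2}, \eqref{Iam3}, \eqref{Iam4} respectively. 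The remaining cross terms (linear in $\zeta$ or $\eta$, together with the action terms coming from the induced representation $(V,W,\alpha,\beta)$) agree on both sides because $V,W$ carry abelian $3$-Lie structures and because $(\alpha_1,\alpha_2)$ is an automorphism of $\mathfrak{g}\Join\mathfrak{h}$ compatible with the induced representation. Once the four structural checks are completed, $(\gamma_1,\gamma_2)$ is the desired preimage and the proof is complete.
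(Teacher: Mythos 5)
Your proposal is correct and follows essentially the same route as the paper: in the forward direction you define $\zeta=\gamma_1 s_1-s_1\alpha_1$ and $\eta=\gamma_2 s_2-s_2\alpha_2$ and extract \eqref{Iam1}--\eqref{Iam4} from the morphism identities exactly as the paper does, and in the converse you build $\gamma_1(s_1(x)+v)=s_1\alpha_1(x)+\zeta(x)+\beta_1(v)$, $\gamma_2(s_2(a)+w)=s_2\alpha_2(a)+\eta(a)+\beta_2(w)$, which is the paper's construction verbatim. The verification strategy (projecting onto the $\mathfrak{g},\mathfrak{h}$ and $V,W$ components and cancelling the cocycle obstructions against \eqref{Iam1}--\eqref{Iam4}) likewise coincides with the paper's computations.
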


\begin{proof} Assume that $(\alpha,\beta)\in \mathrm{Aut}(\mathfrak{g} \Join \mathfrak{h}
)\times \mathrm{Aut}(V \Join W)$ is inducible, that is, there is an
automorphism $\varUpsilon=(\gamma_1,\gamma_2)\in \mathrm{Aut}_{V \Join W}
(\widehat{\mathfrak{g}} \Join \widehat{\mathfrak{h}})$ such
that $\gamma_{1}|_{V}=\beta_1,\gamma_{2}|_{W}=\beta_2$ and $j_2\gamma_2 s_2=\alpha_2, j_1\gamma_1 s_1=\alpha_1$.
Since $(s_1,s_2)$ is a section of $(j_1,j_2)$,
for all
$x\in \mathfrak{g}$ and $a\in \mathfrak{h}$, we have
$$j_1(s_1\alpha_1-\gamma_1 s_1)(x)=\alpha_1(x)-\alpha_1(x)=0,~~ j_2( s_2\alpha_2-\gamma_2  s_2)(a)=\alpha_2(a)-\alpha_2(a)=0,$$
which implies that $(s_1\alpha_1-\gamma_1 s_1)(x)\in \mathrm{ker}(j_1)=V$
and $( s_2\alpha_2-\gamma_2  s_2)(a)\in \mathrm{ker}(j_2)=W$.
So we can define linear maps $\zeta:\mathfrak{g}\longrightarrow
V$ and $\eta:\mathfrak{h}\longrightarrow W$ respectively by
\begin{equation} \zeta(x)=(\gamma_1 s_1-s_1\alpha_1)(x),~~\eta(a)=(\gamma_2 s_2-s_2\alpha_2)(a),~~\forall~x\in \mathfrak{g},~a\in \mathfrak{h}.\end{equation}
Then for any $a \in A, u \in V$,  we have
 \begin{eqnarray*}
&&\beta_2(\nu(x_1, x_2) a) - \nu(\alpha_1(x_1),\alpha_1(x_2))(\alpha_2(a))\\
&=&\gamma_{2}(\nu(x_1, x_2) a) - \nu(\alpha_1(x_1),\alpha_1(x_2))(\alpha_2(a))\\
&=&\gamma_{2}({\widehat{\rho}}(s_1(x_1), s_1(x_2))s_2(a)-s_2(\rho(x_1, x_2)a))\\
&&-{\widehat{\rho}}(s_1(\alpha_1(x_1)), s_1(\alpha_1(x_2)))(s_2(\gamma_2(a)))+s_2(\rho(\alpha_1(x_1),\alpha_1(x_2))(\alpha_2(a)))\\
&=&{\widehat{\rho}}(\gamma_1(s_1(x_1)), \gamma_1(s_1(x_2)))(\gamma_2(s_2(a)))-\gamma_2(s_2(\rho(x_1, x_2)a))\\
&&-{\widehat{\rho}}(\gamma_1(s_1(x_1)),\gamma_1(s_1(x_2)))(s_2(\alpha_2(a)))+{\widehat{\rho}}(\gamma_1(s_1(x_1)),\gamma_1(s_1(x_2)))(s_2(\alpha_2(a)))\\
&&-{\widehat{\rho}}(s_1(\alpha_1(x_1)), s_1(\alpha_1(x_2)))(s_2(\alpha_2(a)))+s_2(\alpha_2(\rho(x_1, x_2)a))\\
&=&{\widehat{\rho}}(\gamma_1(s_1(x_1)), \gamma_1(s_1(x_2)))(\eta(a))- \eta (\rho(x_1, x_2) a)\\
&&+{\widehat{\rho}}(\zeta(x_1),\alpha_1(x_2))(s_2(\alpha_2(a)))-{\widehat{\rho}}(\zeta(x_2),\alpha_1(x_1))(s_2(\alpha_2(a)))\\
&=&\rho(\alpha_1(x_1), \alpha_1(x_2)) \eta(a)- \eta (\rho(x_1, x_2) a) + \alpha(\zeta(x_1), \alpha_1(x_2))(\alpha_2(a))-\alpha(\zeta(x_2), \alpha_1(x_1))(\alpha_2(a)).
\end{eqnarray*}
which implies that Eq.~(\ref{Iam3}) holds.
Analogously, we can show that Eqs.~(\ref{Iam1})-(\ref{Iam2}) and Eqs.(\ref{Iam4}) hold.

Conversely, suppose that $(\alpha,\beta)\in \mathrm{Aut}(\mathfrak{g} \Join \mathfrak{h}
)\times \mathrm{Aut}(V \Join W)$ and there are linear maps $\zeta:\mathfrak{g}\longrightarrow V$ and
$\eta:\mathfrak{h}\longrightarrow W$
satisfying Eqs. (\ref{Iam1})-(\ref{Iam4}). Since $(s_1,s_2)$ is a section of $(j_1,j_2)$,
all $\widehat{x}\in \widehat{\mathfrak{g}},\widehat{a}\in \widehat{\mathfrak{h}}$ can be written as
$\widehat{x}=u+ s_1(x),\widehat{a}=w+s_2(a)$ for some $x\in \mathfrak{g},a\in \mathfrak{h},u\in V,w\in W.$
Define linear maps $\gamma_1:\widehat{\mathfrak{g}}\longrightarrow
\widehat{\mathfrak{g}}$ and $\gamma_2:\widehat{\mathfrak{h}}\longrightarrow
\widehat{\mathfrak{h}}$ respectively by
\begin{equation} \label{Aut1} \gamma_1(\widehat{x})=\gamma_1(u+ s_1(x))=\beta_1(u)+\zeta(x)+ s_1\alpha_1(x),\end{equation}
\begin{equation} \label{Aut2} \gamma_2(\widehat{a})=\gamma_2(w+s_2(a))=\beta_2(w)+\eta(a)+s_2\alpha_2(a).\end{equation}
It is easy to verify that $\gamma_1$ and $\gamma_2$ are bijective. Firstly,
we prove that $\varUpsilon=(\gamma_1,\gamma_2)\in  \mathrm{Aut}( \widehat{\mathfrak{g}}  \Join \widehat{\mathfrak{h}} )$. For any $\widehat{x_1}=u_1+s_1(x_1)$ ,
$\widehat{x_2}= u_2+s_1(x_2)$ and $\widehat{x_3}= u_3+s_1(x_3)$of elememts in $\widehat{\mathfrak{g}}$, we have
\begin{eqnarray*}
&&[\gamma_1(\widehat{x_1}), \gamma_1(\widehat{x_2}), \gamma_1(\widehat{x_3})]\\
&=&[\gamma_1(u_1+s_1(x_1)), \gamma_1(u_2+s_1(x_2)), \gamma_1(u_3+s_1(x_3))]\\
&=&[\beta_1(u_1)+\zeta(x_1)+ s_1\alpha_1(x_1), \beta_1(u_2)+\zeta(x_2)+ s_1\alpha_1(x_2), \beta_1(u_3)+\zeta(x_3)+ s_1\alpha_1(x_3)]\\
&=&[\beta_1(u_1)+\zeta(x_1), \beta_1(u_2)+\zeta(x_2), s_1\alpha_1(x_3)]+[\beta_1(u_1)+\zeta(x_1), s_1\alpha_1(x_2),\beta_1(u_3)+\zeta(x_3)]\\
&&+[\beta_1(u_1)+\zeta(x_1),s_1\alpha_1(x_2), s_1\alpha_1(x_3)]+[s_1\alpha_1(x_1), \beta_1(u_2)+\zeta(x_2), \beta_1(u_3)+\zeta(x_3)]\\
&&+[s_1\alpha_1(x_1), \beta_1(u_2)+\zeta(x_2), s_1\alpha_1(x_3)]+[s_1\alpha_1(x_1), s_1\alpha_1(x_2), \beta_1(u_3)+\zeta(x_3)]\\
&&+\omega(\alpha_1(x_1),\alpha_1(x_2),\alpha_1(x_3))+s_1([\alpha_1(x_1),\alpha_1(x_2),\alpha_1(x_3)])\\
&=&[\beta_1(u_1), \beta_1(u_2), s_1\alpha_1(x_3)]+ [\beta_1(u_1), s_1\alpha_1(x_2), \beta_1(u_3)]+[\beta_1(u_1), s_1\alpha_1(x_2), s_1\alpha_1(x_3)]\\
&&+[s_1\alpha_1(x_1), \beta_1(u_2), \beta_1(u_3)]+[s_1\alpha_1(x_1), \beta_1(u_2), s_1\alpha_1(x_3)]+[s_1\alpha_1(x_1), s_1\alpha_1(x_2), \beta_1(u_3)]\\
&&+\beta(\omega(x_1, x_2, x_3))+\zeta([x_1, x_2, x_3])+s_1\alpha_1([x_1, x_2, x_3])\\
&=&\gamma([u_1, u_2, s_1(x_3)]+[u_1, s_1(x_2), u_3]+[u_1, s_1(x_2), s_1(x_3)]+[s_1(x_1), u_2, u_3]+[s_1(x_1), u_2, x_3]\\
&&+[s_1(x_1), s_1(x_2), u_3]+\omega(x_1, x_2, x_3)+s_1([x_1, x_2, x_3]))\\
&=&\gamma_1[u_1+s_1(x_1), u_2+s_1(x_2), u_1+s_1(x_3)]\\
&=&\gamma_1[\widehat{x_1}, \widehat{x_2}, \widehat{x_3}],
\end{eqnarray*}
which show that $\gamma_1:\widehat{g}\longrightarrow \widehat{g}$ is an algebra morpphism. Similarly, by using the identity, one can prove that $\gamma_2:\widehat{h}\longrightarrow \widehat{h}$ is also an algebra morpphism.
Secondly, we show that the maps $\gamma_1$ and $\gamma_2$ satisfy the compatibilities
\begin{eqnarray*}
&&\gamma_2 ({\widehat{\rho}}(\widehat{x_1}, \widehat{x_2})(\widehat{a})) \\
&=&\gamma_2 ({\widehat{\rho}}(u_1+s_1(x_1), u_2+s_1(x_2))(w+s_2(a))) \\
&=&\gamma_2 ({\widehat{\rho}}(s_1(x_1), s_1(x_2))( w)+{\widehat{\rho}}(u_1, s_1(x_2))(s_2(a))-{\widehat{\rho}}(u_2, s_1(x_1))(s_2(a))+{\widehat{\rho}}(s_1(x_1), s_1(x_2))s_2(a))\\
&=&\gamma_2 (\rho_W(x_1, x_2)w+\beta_2(\alpha(u_1,x_2)a-\alpha(u_2, x_1)a)+\beta_2(\nu(x_1, x_2)a)+\eta(\rho(x_1, x_2)a)+s_2\alpha_2(\rho(x_1, x_2)a))\\
&=&\rho_W(x_1, x_2)\beta_2(w)+\alpha(\beta_1(u_1),\alpha_1(x_2))\alpha_2(a)-\alpha(\beta_1(u_2),\alpha_1(x_1))\alpha_2(a)\\
&&+\nu(\alpha_1(x_1), \alpha_1(x_2))\alpha_2(a)+\rho(\alpha_1(x_1), \alpha_1(x_2))\eta(a)+\alpha(\zeta(x_1), \alpha_1(x_2))(\alpha_2(a))\\
&&-\alpha(\zeta(x_2), \alpha_1(x_1))(\alpha_2(a))+s_2(\rho(\alpha_1(x_1), \alpha_1(x_2))\alpha(a))\\
&=& {{\widehat{\rho}}}(s_1\alpha_1(x_1),s_1\alpha_1(x_2))(\beta_2(w)+\eta(a))+ {{\widehat{\rho}}}(\beta_1(u_1)+\eta(x_1), s_1\alpha_1(x_2))s_2\alpha_2(a)\\
&&-{{\widehat{\rho}}}(\beta_1(u_2)+\eta(x_2), s_1\alpha_1(x_1))s_2\alpha_2(a)+{{\widehat{\rho}}}(s_1\alpha_1(x_1),s_1\alpha_1(x_2))(s_2\alpha_2(a))\\
&=&{{\widehat{\rho}}}(\beta_1(u_1)+\zeta(x_1)+ s_1\alpha_1(x_1), \beta_1(u_2)+\zeta(x_2)+ s_1\alpha_1(x_2))(\beta_2(w)+\eta(a)+s_2\alpha_2(a))\\
&=&{{\widehat{\rho}}}(\gamma_1(\widehat{x_1}), (\widehat{x_2}))(\gamma_2(\widehat{a})),
\end{eqnarray*}
which show that $\gamma_2 ({\widehat{\rho}}(\widehat{x_1}, \widehat{x_2})(\widehat{a})) = {{\widehat{\rho}}}(\gamma_1(\widehat{x_1}), (\widehat{x_2}))(\gamma_2(\widehat{a}))$. Similarly, we can get $\gamma_1 ({\widehat{\psi}}(\widehat{a_1}, \widehat{a_2})(\widehat{x})) = {{\widehat{\psi}}}(\gamma_2(\widehat{a_1}), (\widehat{a_2}))(\gamma_1(\widehat{x}))$.
 Thus, $\varUpsilon=(\gamma_1,\gamma_2)\in  \mathrm{Aut}(\widehat{\mathfrak{g}} \Join \widehat{\mathfrak{h}})$.
Finally, we check that $\gamma_1|_{V}=\beta_1, \gamma_2|_{W}=\beta_2, j_1\gamma_1 s_1=\alpha_1, j_2 \gamma_2 s_2=\alpha_2$.  Indeed, by Eq.~(\ref{Aut1}) and Eq.~(\ref{Aut2}), we have
\begin{equation*}\gamma_1(u)=\gamma_1(u+s_1(0))=\beta_1(u),~\forall~u\in V,\end{equation*}
\begin{equation*}\gamma_2(w)=\gamma_2(w+s_2(0))=\beta_2(w),~\forall~w\in W,\end{equation*}
\begin{equation*}j_1\gamma_1s_1(x)=j_1\gamma_1 (0+s_1(x))=j_1(\zeta(x)+s_1\alpha_1(x))=\alpha_1(x),~\forall~x\in \mathfrak{g},\end{equation*}
\begin{equation*}j_2\gamma_2s_2(a)=j_2\gamma_2(0+s_2(a)=j_2(\eta(a)+s_2\alpha_2(a))=\alpha_2(a),~\forall~a\in \mathfrak{h}.\end{equation*}
This completes the proof.
\end{proof}

Assume that $\mathfrak{g} \Join \mathfrak{h}$ and $V \Join W$ are
two matched pairs of 3-Lie algebras. Let
  \begin{equation*}\mathcal{E}:\xymatrix{
0 \ar[r] & V \Join W \ar[r]^{i_1 \Join i_2}  &  \widehat{\mathfrak{g}} \Join \widehat{\mathfrak{h}} \ar[r]^{j_1 \Join j_2} & g \Join h \ar[r] & 0
}\end{equation*}
be an abelian extension of $\mathfrak{g} \Join \mathfrak{h}$ by $V \Join W$
   with a section $(s_1,s_2)$ of $(j_1,j_2)$ and
 $( \omega, \theta, \nu, \phi) $ be the corresponding abelian 2-cocycle induced by
$(s_1,s_2)$. Define the set $\mathcal{C}$ as follows
\begin{align*}
		\mathcal{C}=&\left\{\begin{aligned}&(\alpha,\beta)\in \mathrm{Aut}( \mathfrak{g} \Join \mathfrak{h}
)\times \mathrm{Aut}(V \Join W ),\\&~~with~~\alpha=(\alpha_1,\alpha_2),~\beta=(\beta_1,\beta_2)
\end{aligned}\left|
\begin{aligned}& Eqs.~(\ref{Iam1})-(\ref{Iam4}) hold
     \end{aligned}\right.\right\}.
 	\end{align*}
The set $\mathcal{C}$ represents of all compatible pairs of automorphisms. Then $\mathcal{C}$ is obviously a subgroup of $\mathrm{Aut}( \mathfrak{g} \Join \mathfrak{h}
)\times \mathrm{Aut}(V \Join W )$. For each pair $(\alpha,\beta)\in \mathrm{Aut}( \mathfrak{g} \Join \mathfrak{h}
)\times \mathrm{Aut}(V \Join W )$, we
define bilinear maps $ \omega_{(\alpha,\beta)} \in \mathrm {Hom} (\mathfrak{g}\otimes \mathfrak{g}\otimes \mathfrak{g},V) $, $ \nu_{(\alpha,\beta)} \in \mathrm {Hom} (\mathfrak{g}\otimes \mathfrak{h},W)$, $ \phi_{(\alpha,\beta)} \in \mathrm {Hom} (\mathfrak{h}\otimes \mathfrak{g},V)$,
  $ \theta_{(\alpha,\beta)} \in \mathrm {Hom} (\mathfrak{h}\otimes \mathfrak{h}\otimes \mathfrak{h},W) $, $\rho_{(\alpha, \beta)} :\mathfrak{g} \times \mathfrak{g} \times W \rightarrow W $,
  $\psi_{(\alpha,\beta)} \mathfrak{h} \times \mathfrak{h} \times V \rightarrow V$, $\alpha_{(\alpha,\beta)}: V \times \mathfrak{g} \rightarrow Hom(h, W)$ and $\beta_{(\alpha,\beta)}: W \times \mathfrak{h} \rightarrow Hom(g, V)$ respectively by
 \begin{align}
\omega_{(\alpha,\beta)}(x_1, x_2, x_3)=\beta_1\omega(\alpha_{1}^{-1}(x_1),\alpha_{1}^{-1}(x_2), \alpha_{1}^{-1}(x_3)),\\
\theta_{(\alpha,\beta)}(a_1, a_2, a_3)=\beta_2\theta(\alpha_{1}^{-1}(a_1),\alpha_{2}^{-1}(a_2), \alpha_{1}^{-1}(a_3)),\\
\nu_{(\alpha,\beta)}(x_1,x_2)(a)=\beta_2\nu(\alpha_{1}^{-1}(x_1),\alpha_{1}^{-1}(x_2))\alpha_{2}^{-1}(a),\\
\phi_{(\alpha,\beta)}(a_1,a_2)(x)=\beta_1\phi(\alpha_{2}^{-1}(a_1),\alpha_{2}^{-1}(a_2))\alpha_{1}^{-1}(x),\\
\rho_{(\alpha,\beta)}(x_1, x_2)(a)=\beta_2\rho(\alpha_{1}^{-1}(x_1),\alpha_{1}^{-1}(x_2))\alpha_{2}^{-1}(a),\\
\psi_{(\alpha,\beta)}(a_1, a_2)(x)=\beta_1\phi(\alpha_{2}^{-1}(a_1),\alpha_{2}^{-1}(a_2))\alpha_{1}^{-1}(x),\\
\alpha_{(\alpha,\beta)}(v,x)(a)=\beta_1\alpha(\beta_2(v),\alpha_{1}^{-1}(x))(\alpha_{2}^{-1}(a)),\\
\beta_{(\alpha,\beta)}(w,a)(x)=\beta_2\beta(\beta_1(w),\alpha_{2}^{-1}(a))(\alpha_{1}^{-1}(x))
\end{align}
for all $x, x_1, x_2, x_3 \in \mathfrak{g}, a, a_1, a_2, a_3 \in \mathfrak{h}, v\in V and w\in W$.

We denote $(\omega_{(\alpha,\beta)},\theta_{(\alpha,\beta)},\nu_{(\alpha,\beta)},\phi_{(\alpha,\beta)} )$ by $(\omega,\theta,\nu,\phi)_{(\alpha,\beta)}$ for simplicity.
In general, $(\omega,\theta,\nu,\phi)_{(\alpha,\beta)}$ may not be a 2-cocycle.
In fact, $(\omega,\theta,\nu,\phi)_{(\alpha,\beta)}$  is a 2-cocycle if $(\alpha,\beta)\in \mathcal{C}$.

\begin{prop} With the notations established above, if $(\alpha,\beta)\in \mathcal{C}$, then $(\omega,\theta,\nu,\phi)_{(\alpha,\beta)}$ forms an abelian 2-cocycle.
\end{prop}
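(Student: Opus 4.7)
The plan is to leverage the characterization of $\mathcal{C}$ provided by Theorem \ref{Idu}. Since the conditions (\ref{Iam1})--(\ref{Iam4}) defining $\mathcal{C}$ are precisely the criterion for inducibility, every $(\alpha,\beta)\in\mathcal{C}$ lifts to an automorphism $\varUpsilon=(\gamma_1,\gamma_2)\in\mathrm{Aut}_{V\Join W}(\widehat{\mathfrak{g}}\Join\widehat{\mathfrak{h}})$ with $\varPhi(\varUpsilon)=(\alpha,\beta)$. The idea is then to use $\varUpsilon$ to transport the section $(s_1,s_2)$ to a new section $(s_1',s_2')$ of the \emph{same} abelian extension $\mathcal{E}$ whose associated 2-cocycle coincides with $(\omega,\theta,\nu,\phi)_{(\alpha,\beta)}$. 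Since the 2-cocycle induced by any section of an abelian extension automatically lies in $\mathcal{Z}^2_{\mathrm{MPL}}(\mathfrak{g},\mathfrak{h};V,W)$ (a fact established in the proof of the classification $\mathrm{Ext}\cong H^2_{\mathrm{MPL}}$ in Section \ref{sec5}), the desired conclusion would follow without any extra verification of the cocycle equations.

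Concretely, I would set
\begin{align*}
s_1'(x):=\gamma_1(s_1(\alpha_1^{-1}(x))),\qquad s_2'(a):=\gamma_2(s_2(\alpha_2^{-1}(a))),
\end{align*}
and first check that this defines a section of $j_1\Join j_2$ using the identities $j_1\gamma_1 s_1=\alpha_1$ and $j_2\gamma_2 s_2=\alpha_2$ from Proposition \ref{Idc}. Next, using that $\gamma_1$ is a 3-Lie algebra morphism on $\widehat{\mathfrak{g}}$, that $\alpha_1^{-1}$ is a 3-Lie algebra morphism on $\mathfrak{g}$, and that $\gamma_1|_V=\beta_1$, a direct substitution would give
\begin{align*}
\omega'(x_1,x_2,x_3)&=[s_1'(x_1),s_1'(x_2),s_1'(x_3)]-s_1'([x_1,x_2,x_3])\\
&=\beta_1\omega(\alpha_1^{-1}(x_1),\alpha_1^{-1}(x_2),\alpha_1^{-1}(x_3))=\omega_{(\alpha,\beta)}(x_1,x_2,x_3),
\end{align*}
and an entirely analogous calculation on the $\mathfrak{h}$-side would yield $\theta'=\theta_{(\alpha,\beta)}$.

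For the mixed components $\nu'$ and $\phi'$, I would invoke the intertwining relations $\gamma_2(\widehat{\rho}(\xi_1,\xi_2)\eta)=\widehat{\rho}(\gamma_1\xi_1,\gamma_1\xi_2)\gamma_2\eta$ and its counterpart for $\widehat{\psi}$ (both used in the proof of Proposition \ref{Idc}), together with the automorphism property $\alpha_2(\rho(x_1,x_2)a)=\rho(\alpha_1(x_1),\alpha_1(x_2))\alpha_2(a)$ and its $\psi$-analog for the matched pair, in order to obtain $\nu'=\nu_{(\alpha,\beta)}$ and $\phi'=\phi_{(\alpha,\beta)}$. The main obstacle is this bookkeeping in the mixed components: each appearance of the pairing maps $\alpha$ and $\beta$ of the representation must be absorbed correctly when decomposing $\widehat{\rho}$ and $\widehat{\psi}$ along $\mathfrak{h}\oplus W$ and $\mathfrak{g}\oplus V$ respectively, so that the induced auxiliary data $\rho_{(\alpha,\beta)},\psi_{(\alpha,\beta)},\alpha_{(\alpha,\beta)},\beta_{(\alpha,\beta)}$ arise precisely as the components of the transported structure. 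Once these four identifications $\omega'=\omega_{(\alpha,\beta)}$, $\theta'=\theta_{(\alpha,\beta)}$, $\nu'=\nu_{(\alpha,\beta)}$, $\phi'=\phi_{(\alpha,\beta)}$ are in hand, the cocycle property is inherited from the section $(s_1',s_2')$ of $\mathcal{E}$, and no further argument is required.
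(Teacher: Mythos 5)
Your argument is genuinely different from the paper's, and it contains a gap that matters for the role this proposition plays in the Wells-map machinery. The paper proves the statement by \emph{direct substitution}: it takes the known cocycle identities (\ref{2co-1})--(\ref{2co-8}) for $(\omega,\theta,\nu,\phi)$, replaces the arguments by $\alpha_1^{-1}(x_i)$ and $\alpha_2^{-1}(a_j)$, applies $\beta_1$ or $\beta_2$, and uses only the fact that $\alpha$ is an automorphism of the matched pair (together with the compatibility of $\beta$ with the induced representation) to recognize the result as the corresponding identity for $(\omega,\theta,\nu,\phi)_{(\alpha,\beta)}$. No lift of $(\alpha,\beta)$ to $\widehat{\mathfrak{g}}\Join\widehat{\mathfrak{h}}$ is ever invoked. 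Your route instead \emph{presupposes} such a lift $\varUpsilon$ and transports the section; granted the lift, your computations for $\omega'$, $\theta'$, $\nu'$, $\phi'$ are correct (e.g.\ $\nu'(x_1,x_2)a=\gamma_2\bigl(\widehat{\rho}(s_1\alpha_1^{-1}(x_1),s_1\alpha_1^{-1}(x_2))s_2\alpha_2^{-1}(a)\bigr)-\gamma_2 s_2\bigl(\rho(\alpha_1^{-1}(x_1),\alpha_1^{-1}(x_2))\alpha_2^{-1}(a)\bigr)=\beta_2\nu(\alpha_1^{-1}(x_1),\alpha_1^{-1}(x_2))\alpha_2^{-1}(a)$), and the conclusion then follows from section-independence of the induced representation plus the fact that every section yields a $2$-cocycle.

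The gap is the very first step: you assert that every $(\alpha,\beta)\in\mathcal{C}$ lifts. If $\mathcal{C}$ really coincided with the set of inducible pairs, the entire subsequent development would collapse: Theorem \ref{CIdu} would read ``inducible iff inducible and the cocycles are equivalent,'' the Wells map $\mathcal{W}$ would vanish identically on its domain, and the exact sequence of Section \ref{sec7} would carry no information at its third term. The whole point of introducing $\mathcal{C}$ (the ``compatible pairs'') and the Wells map is that $\mathcal{C}$ in general \emph{strictly contains} the inducible pairs, and $\mathcal{W}(\alpha,\beta)\in\mathcal{H}^2_{\mathrm{MPL}}$ is precisely the obstruction to lifting. (The paper's displayed definition of $\mathcal{C}$ via Eqs.~(\ref{Iam1})--(\ref{Iam4}) is admittedly written carelessly, but its own proof of this proposition uses only the automorphism property of $\alpha$, not the existence of $\zeta,\eta$, which confirms the intended reading.) For a compatible but non-inducible pair your argument produces nothing: there is no $\varUpsilon$ to transport the section with, so the cocycle property of $(\omega,\theta,\nu,\phi)_{(\alpha,\beta)}$ must be checked by hand, as the paper does. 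To repair your proof you would need to replace the lifting step by the direct verification, at which point you have reproduced the paper's argument.
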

\begin{proof}
Since $(\alpha,\beta)\in \mathcal{C}$, for any $x\in\mathfrak{g} ,a\in \mathfrak{h}$, we have the following equalities
$ \alpha_{2}^{-1}(\rho(x_1, x_2)a)=\rho(\alpha_{1}^{-1}(x_1), \alpha_{1}^{-1}(x_12)\alpha_{2}^{-1}(a)$ .
Given that $(\omega,\theta,\nu,\phi)$ is a 2-cocycle, the identities (\ref{2co-1})-(\ref{2co-4}) are hold.
In identities (\ref{2co-2}), if we replace $x_1, x_2, a_1, a_2, a_3$ by $\alpha_{1}^{-1}(x_1),\alpha_{1}^{-1}(x_2), \alpha_{2}^{-1}(a_1), \alpha_{2}^{-1}(a_2), \alpha_{2}^{-1}(a_3)$ respectively, we obtain
 \begin{eqnarray*}
  &\beta_2( [\nu(\alpha_{1}^{-1}(x_1), \alpha_{1}^{-1}(x_2))\alpha_{2}^{-1}(a_1) ,\alpha_{2}^{-1}(a_2), \alpha_{2}^{-1}(a_3)] + [\alpha_{2}^{-1}(a_1), \nu(\alpha_{1}^{-1}(x_1), \alpha_{1}^{-1}(x_2))\alpha_{2}^{-1}(a_2), \alpha_{2}^{-1}(a_3)] \\
  &+ [\alpha_{2}^{-1}(a_1), \alpha_{2}^{-1}(a_2), \nu(\alpha_{1}^{-1}(x_1), \alpha_{1}^{-1}(x_2))\alpha_{2}^{-1}(a_3)] + \theta(\rho(\alpha_{1}^{-1}(x_1),\alpha_{1}^{-1}(x_2))\alpha_{2}^{-1}(a_1), \alpha_{2}^{-1}(a_2), \alpha_{2}^{-1}(a_3)) \\
  &+ \theta(\alpha_{2}^{-1}(a_1), \rho(\alpha_{1}^{-1}(x_1), \alpha_{1}^{-1}(x_2))\alpha_{2}^{-1}(a_2), \alpha_{2}^{-1}(a_3)) + \theta(\alpha_{2}^{-1}(a_1), \alpha_{2}^{-1}(a_2), \rho(\alpha_{1}^{-1}(x_1), \alpha_{1}^{-1}(x_2))\alpha_{2}^{-1}(a_3))\\
&-\rho(\alpha_{1}^{-1}(x_1), \alpha_{1}^{-1}(x_2))\theta(\alpha_{2}^{-1}(a_1), \alpha_{2}^{-1}(a_2), \alpha_{2}^{-1}(a_3)) - \nu(\alpha_{1}^{-1}(x_1), \alpha_{1}^{-1}(x_2))[\alpha_{2}^{-1}(a_1), \alpha_{2}^{-1}(a_2), \alpha_{2}^{-1}(a_3)])=0,
  \end{eqnarray*}
  which can be written as
   \begin{eqnarray*}
  &[\beta_2\nu(\alpha_{1}^{-1}(x_1), \alpha_{1}^{-1}(x_2))\alpha_{2}^{-1}(a_1) ,a_2, a_3] + [a_1, \beta_2\nu(\alpha_{1}^{-1}(x_1), \alpha_{1}^{-1}(x_2))\alpha_{2}^{-1}(a_2), a_3] \\
  &+ [a_1, a_2, \beta_2\nu(\alpha_{1}^{-1}(x_1), \alpha_{1}^{-1}(x_2))\alpha_{2}^{-1}(a_3)] + \beta_2\theta(\alpha_{2}^{-1}(\rho(x_1,x_2)(a_1)), \alpha_{2}^{-1}(a_2), \alpha_{2}^{-1}(a_3)) \\
  &+ \beta_2\theta(\alpha_{2}^{-1}(a_1), \alpha_{2}^{-1}(\rho(x_1, x_2)(a_2)), \alpha_{2}^{-1}(a_3)) + \beta_2\theta(\alpha_{2}^{-1}(a_1), \alpha_{2}^{-1}(a_2), \alpha_{2}^{-1}(\rho(x_1, x_2)(a_3))\\
&-\beta_2\rho(\alpha_{1}^{-1}(x_1), \alpha_{1}^{-1}(x_2))\theta(\alpha_{2}^{-1}(a_1), \alpha_{2}^{-1}(a_2), \alpha_{2}^{-1}(a_3)) \\
&- \beta_2\nu(\alpha_{1}^{-1}(x_1), \alpha_{1}^{-1}(x_2))[\alpha_{2}^{-1}(a_1), \alpha_{2}^{-1}(a_2), \alpha_{2}^{-1}(a_3)]=0,
  \end{eqnarray*}
This is equivalent to
\begin{eqnarray*}
  &[\nu_{(\alpha,\beta)}(x_1, x_2)a_1 ,a_2, a_3] + [a_1, \nu_{(\alpha,\beta)}(x_1,x_2)a_2, a_3] + [a_1, a_2, \nu_{(\alpha,\beta)}(x_1, x_2)a_3] \\
  &+\theta_{(\alpha,\beta)}(\rho(x_1,x_2)(a_1), a_2, a_3) + \theta_{(\alpha,\beta)}(a_1, \rho(x_1, x_2)(a_2), a_3) + \theta_{(\alpha,\beta)}(a_1, a_2, \rho(x_1, x_2)(a_3))\\
&-\rho_{(\alpha,\beta)}(x_1, x_2)\theta_{(\alpha,\beta)}(a_1, a_2, a_3)- \nu_{(\alpha,\beta)}(x_1, x_2)[a_1, a_2, a_3]=0,
 \end{eqnarray*}
 which implies that Eq.~(\ref{2co-2}) holds for $(\omega,\theta,\nu,\phi)_{(\alpha,\beta)}$. Similarly, we can check that
Eqs.~(\ref{2co-1})  and (\ref{2co-3})-(\ref{2co-4}) hold. This finishes the proof.
 \end{proof}

 \begin{thm} \label{CIdu}Assume that $\mathfrak{g} \Join \mathfrak{h}$ and $V \Join W$ are
two matched pairs of 3-Lie algebras. Let
  $\mathcal{E}:\xymatrix{
0 \ar[r] & V \Join W \ar[r]^{i_1 \Join i_2}  &  \widehat{\mathfrak{g}} \Join \widehat{\mathfrak{h}} \ar[r]^{j_1 \Join j_2} & \mathfrak{g} \Join \mathfrak{h} \ar[r] & 0
}$
be an abelian extension of $\mathfrak{g} \Join \mathfrak{h}$ by $V \Join W$
   with a section $(s_1,s_2)$ of $(j_1,j_2)$ and
 $(\omega,\theta,\nu,\phi) $ be the corresponding abelian 2-cocycle induced by
$(s_1,s_2)$.
 A pair $ (\alpha,\beta)$ is inducible if and only if $(\alpha,\beta)\in \mathcal{C}$ and the two
 abelian 2-cocycles  $(\omega,\theta,\nu,\phi)_{(\alpha,\beta)}$ and  $(\omega,\theta,\nu,\phi)$
 are equivalent.
\end{thm}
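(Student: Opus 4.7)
\medskip

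\noindent\textbf{Proof proposal.} The plan is to reduce everything to Theorem \ref{Idu} and then translate its equations (\ref{Iam1})--(\ref{Iam4}) into the statement ``$(\alpha,\beta)\in\mathcal{C}$ plus $(\omega,\theta,\nu,\phi)_{(\alpha,\beta)}\sim(\omega,\theta,\nu,\phi)$.'' The pair of maps $(\zeta,\eta)$ produced in Theorem \ref{Idu} will play the double role of measuring the failure of the section $(s_1,s_2)$ to intertwine the would-be lift with $(\alpha_1,\alpha_2)$ on one hand, and providing the $1$-cochain whose image under $D_1$ realizes the required equivalence of $2$-cocycles on the other.

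\smallskip

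For the forward direction, I start with an inducible pair and its lift $\varUpsilon=(\gamma_1,\gamma_2)\in\mathrm{Aut}_{V\Join W}(\widehat{\mathfrak{g}}\Join\widehat{\mathfrak{h}})$, and set $\zeta=\gamma_1 s_1-s_1\alpha_1$, $\eta=\gamma_2 s_2-s_2\alpha_2$ as in the proof of Theorem \ref{Idu}. First I would check that $(\alpha,\beta)\in\mathcal{C}$: this is essentially a consequence of Theorem \ref{Idu} because the very equations (\ref{Iam1})--(\ref{Iam4}) defining $\mathcal{C}$ are exactly the ones whose existence (for some $\zeta,\eta$) is proved there. Next, I would introduce the $1$-cochain $(\zeta',\eta')\in C^{1}(\mathfrak{g},\mathfrak{h};V,W)$ obtained by pushing $(\zeta,\eta)$ forward through the automorphisms, namely $\zeta'=\beta_1^{-1}\zeta\circ\alpha_1^{-1}$ and $\eta'=\beta_2^{-1}\eta\circ\alpha_2^{-1}$ (the precise choice being dictated by the requirement that the $\alpha_i^{-1}$'s appearing in the definitions of $\omega_{(\alpha,\beta)},\theta_{(\alpha,\beta)},\nu_{(\alpha,\beta)},\phi_{(\alpha,\beta)}$ get absorbed), and then verify, formula by formula, that
\[
(\omega,\theta,\nu,\phi)_{(\alpha,\beta)}-(\omega,\theta,\nu,\phi)=D_1(\zeta',\eta').
\]
Each of the four components is a direct rewriting of (\ref{Iam1})--(\ref{Iam4}) after substituting $x_i\mapsto\alpha_1^{-1}(x_i)$, $a_i\mapsto\alpha_2^{-1}(a_i)$ and then applying $\beta_1$ or $\beta_2$; this uses the morphism identities $\alpha_2\rho=\rho(\alpha_1,\alpha_1)\alpha_2$ and $\alpha_1\psi=\psi(\alpha_2,\alpha_2)\alpha_1$ coming from $\alpha\in\mathrm{Aut}(\mathfrak{g}\Join\mathfrak{h})$.

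\smallskip

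For the converse, I assume $(\alpha,\beta)\in\mathcal{C}$ and a $1$-cochain $(\zeta',\eta')$ with $(\omega,\theta,\nu,\phi)_{(\alpha,\beta)}-(\omega,\theta,\nu,\phi)=D_1(\zeta',\eta')$. I then invert the construction above, setting $\zeta=\beta_1\circ\zeta'\circ\alpha_1$ and $\eta=\beta_2\circ\eta'\circ\alpha_2$, and verify that this pair satisfies (\ref{Iam1})--(\ref{Iam4}). The check is essentially the reverse of the forward computation: each component of the coboundary identity, after the substitution $x_i\mapsto\alpha_1(x_i)$, $a_i\mapsto\alpha_2(a_i)$, becomes precisely one of (\ref{Iam1})--(\ref{Iam4}). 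Once the four equations are in place, Theorem \ref{Idu} delivers the automorphism $\varUpsilon\in\mathrm{Aut}_{V\Join W}(\widehat{\mathfrak{g}}\Join\widehat{\mathfrak{h}})$ with $\varPhi(\varUpsilon)=(\alpha,\beta)$, proving inducibility.

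\smallskip

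The main obstacle is purely bookkeeping: the definitions of $\omega_{(\alpha,\beta)},\theta_{(\alpha,\beta)},\nu_{(\alpha,\beta)},\phi_{(\alpha,\beta)}$ weave the inverses $\alpha_1^{-1},\alpha_2^{-1}$ into every slot, while the coboundary operator $D_1$ acts on the ``untwisted'' variables; choosing the right conjugated pair $(\zeta',\eta')$ and tracking that the $\alpha_i$--$\alpha_i^{-1}$ cancellations produce exactly the mixed terms involving $\alpha,\beta,\rho,\psi$ appearing on the right-hand sides of (\ref{Iam1})--(\ref{Iam4}) is the only delicate point. Once this dictionary between conjugation and coboundary is set up correctly, the two directions are symmetric.
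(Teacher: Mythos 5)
Your proposal follows the paper's proof essentially verbatim: both directions are reduced to Theorem \ref{Idu}, membership in $\mathcal{C}$ is read off from the existence of $(\zeta,\eta)$ satisfying (\ref{Iam1})--(\ref{Iam4}), and those equations are converted into the coboundary condition by the substitution $x_i\mapsto\alpha_1^{-1}(x_i)$, $a_i\mapsto\alpha_2^{-1}(a_i)$. The one concrete flaw is your formula for the conjugated $1$-cochain. After that substitution the left-hand side of (\ref{Iam1}) becomes $\beta_1\omega(\alpha_1^{-1}(x_1),\alpha_1^{-1}(x_2),\alpha_1^{-1}(x_3))-\omega(x_1,x_2,x_3)=\omega_{(\alpha,\beta)}(x_1,x_2,x_3)-\omega(x_1,x_2,x_3)$, so the outer $\beta_1$ is already absorbed into the definition of $\omega_{(\alpha,\beta)}$, while the right-hand side turns into the coboundary expression evaluated on $\zeta\alpha_1^{-1}$ with no $\beta_1$ occurring anywhere. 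Hence the equivalence is realized by $(\zeta\alpha_1^{-1},\eta\alpha_2^{-1})$, not by $(\beta_1^{-1}\zeta\alpha_1^{-1},\beta_2^{-1}\eta\alpha_2^{-1})$, and correspondingly in the converse direction one must take $\zeta=f\alpha_1$ and $\eta=g\alpha_2$ rather than $\beta_1 f\alpha_1$ and $\beta_2 g\alpha_2$. With your choice the identity $(\omega,\theta,\nu,\phi)_{(\alpha,\beta)}-(\omega,\theta,\nu,\phi)=D_1(\zeta',\eta')$ would fail in general, since $\beta_1^{-1}$ does not commute with the brackets and module actions appearing in $D_1$. Apart from this misplaced factor, which you yourself flagged as the delicate bookkeeping point, your outline coincides with the paper's argument.
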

\begin{proof}
Let $(\alpha,\beta)$ is inducible. It is obviously that $(\alpha,\beta)\in \mathcal{C}$. Then by Theorem \ref{Idu}, there exists linear maps $\zeta:\mathfrak{g}\longrightarrow V$ and $\eta:\mathfrak{h}\longrightarrow W$
satisfying Eqs.~ (\ref{Iam1})-(\ref{Iam4}).
 We replace $x, x_1, x_2, x_3, a, a_1, a_2, a_3$ by $\alpha_{1}^{-1}(x),\alpha_{1}^{-1}(x_1), \alpha_{1}^{-1}(x_2), \alpha_{1}^{-1}(x_3), \alpha_{2}^{-1}(a),\alpha_{2}^{-1}(a_1), \alpha_{2}^{-1}(a_2), \alpha_{2}^{-1}(a_3)$ respectively, we get
 \begin{align*}
&\omega_{(\alpha,\beta)}(x_1, x_2, x_3)-\omega(x_1,x_2,x_3)=[\zeta\alpha_{1}^{-1}(x_1), x_2, \zeta\alpha_{1}^{-1}(x_3)] \\
&-\zeta\alpha_{1}^{-1}([x_1, x_2, x_3])+ [x_1,\zeta\alpha_{1}^{-1}(x_2), \zeta\alpha_{1}^{-1}(x_3)]+[\zeta\alpha_{1}^{-1}(x_1), \zeta\alpha_{1}^{-1}(x_2), x_3],\\
&\theta_{(\alpha,\beta)}(a_1, a_2, a_3))-\theta(a_1, a_2, a_3) = [\eta\alpha_{2}^{-1}(a_1), a_2, \eta\alpha_{2}^{-1}(a_3)] \\
&- \eta\alpha_{2}^{-1}([a_1, a_2, a_3])+ [a_1, \eta\alpha_{2}^{-1}(a_2), \eta\alpha_{2}^{-1}(a_3)] + [\eta\alpha_{2}^{-1}(a_1), \eta\alpha_{2}^{-1}(a_2), a_3],\\
&\nu_{(\alpha,\beta)}(x_1, x_2) a - \nu(x_1, x_2)(a)= \rho(x_1, x_2) \eta\alpha_{2}^{-1}(a) \\
&-\eta (\rho(\alpha_{1}^{-1}(x_1), \alpha_{1}^{-1}(x_2)) \alpha_{2}^{-1}(a))+ \alpha(\zeta\alpha_{1}^{-1}(x_1),x_2)(a)-\alpha(\zeta\alpha_{1}^{-1}(x_2),x_1)(a), \\
&\phi{(\alpha,\beta)}(a_1, a_2) x - \phi(a_1, a_2) (x) = \psi(a_1, a_2) \zeta\alpha_{1}^{-1} (x)\\
&- \zeta(\psi(\alpha_{2}^{-1}(a_1), \alpha_{2}^{-1}(a_2)) \alpha_{1}^{-1}(x))+ \beta(\eta\alpha_{2}^{-1}(a_1),a_2)(x)-\beta(\eta\alpha_{2}^{-1}(a_2), a_1)(x) .
\end{align*}
This shows that  $(\omega,\theta,\nu,\phi)_{(\alpha,\beta)}$ and  $(\omega,\theta,\nu,\phi)$
 are equivalent and the equivalence is given by the map $(\zeta\alpha_{1}^{-1},\eta\alpha_{2}^{-1}) \in \mathrm{Hom}(\mathfrak{g},V) \oplus \mathrm{Hom}(\mathfrak{h},W).$

 Conversely, suppose that $(\omega,\theta,\nu,\phi)_{(\alpha,\beta)}$ and  $(\omega,\theta,\nu,\phi)$
 are equivalent and the equivalence is given by the map $(f,g) \in \mathrm{Hom}(\mathfrak{g},V) \oplus \mathrm{Hom}(\mathfrak{h},W).$
 Then it can be easily checked that the maps $\zeta:=f \alpha_{1}:\mathfrak{g}\longrightarrow V$  and $\eta:=g \alpha_{2}:\mathfrak{h}\longrightarrow W$ satisfies the Eqs.~ (\ref{Iam1})-(\ref{Iam4}). Thus the pair $(\alpha,\beta)$ is inducible.
\end{proof}
\section{Wells exact sequences}\label{sec7}
In this section, we consider the Wells map associated with abelian extensions of a matched pair of 3-Lie algebras.

Let
  \begin{equation*}\mathcal{E}:\xymatrix{
0 \ar[r] & V \Join W \ar[r]^{i_1 \Join i_2}  &  \widehat{\mathfrak{g}} \Join \widehat{\mathfrak{h}} \ar[r]^{j_1 \Join j_2} & \mathfrak{g} \Join \mathfrak{h} \ar[r] & 0
}\end{equation*}
be an abelian extension of $\mathfrak{g} \Join \mathfrak{h}$ by $V \Join W$
   with a section $(s_1,s_2)$ of $(j_1,j_2)$ and
 $(\omega,\theta,\nu,\phi) $ be the corresponding abelian 2-cocycle.

Define a  map $\mathcal {W}:\mathcal{C}\longrightarrow \mathcal{H}^2_\mathrm{MPL} (\mathfrak{g} \Join \mathfrak{h}, V \Join W)$ by
\begin{equation}\label{W1}
	\mathcal {W}(\alpha,\beta)
=[(\omega,\theta,\nu,\phi)_{(\alpha,\beta)}
-(\omega,\theta,\nu,\phi)].
\end{equation}
The map $\mathcal {W}$ is called the Wells map associated with $\mathcal{E}$.

\begin{prop}
 The Wells map  $\mathcal {W}$  does not depend on the choice of section.
\end{prop}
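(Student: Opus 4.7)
The plan is to reduce the independence of $\mathcal{W}$ from the choice of section to a $2$-coboundary argument. Fix two sections $(s_1, s_2)$ and $(s_1', s_2')$ of $(j_1, j_2)$ and set
\[
\lambda_1 := s_1 - s_1' : \mathfrak{g} \to \widehat{\mathfrak{g}}, \qquad \lambda_2 := s_2 - s_2' : \mathfrak{h} \to \widehat{\mathfrak{h}}.
\]
Because $j_1 \circ \lambda_1 = 0$ and $j_2 \circ \lambda_2 = 0$, the images of $\lambda_1, \lambda_2$ lie in $V, W$ respectively, so $(\lambda_1, \lambda_2) \in C^{1}(\mathfrak{g}, \mathfrak{h}; V, W)$. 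Denote by $(\omega, \theta, \nu, \phi)$ and $(\omega', \theta', \nu', \phi')$ the abelian $2$-cocycles associated with $(s_1, s_2)$ and $(s_1', s_2')$ respectively.

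The first key step is the classical fact that changing the section alters the induced $2$-cocycle by a $2$-coboundary, namely
\[
(\omega', \theta', \nu', \phi') - (\omega, \theta, \nu, \phi) = D_1(\lambda_1, \lambda_2).
\]
This is verified by substituting $s_i' = s_i - \lambda_i$ into the defining formulas for $\omega', \theta', \nu', \phi'$ used in the abelian-extension theorem, expanding the ternary brackets and actions inside $\widehat{\mathfrak{g}} \Join \widehat{\mathfrak{h}}$, and exploiting that $V \Join W$ is an abelian ideal so that any ternary bracket containing two or more entries from $V \cup W$ vanishes. What remains matches term by term with the formulas defining $D_1$ in Section~\ref{sec4}.

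The second key step is to show that the twisting operation $(\omega, \theta, \nu, \phi) \mapsto (\omega, \theta, \nu, \phi)_{(\alpha, \beta)}$ is compatible with $D_1$ on the group $\mathcal{C}$: if one sets
\[
(N_1, N_2)_{(\alpha, \beta)} := (\beta_1 N_1 \alpha_1^{-1}, \beta_2 N_2 \alpha_2^{-1})
\]
for $(N_1, N_2) \in \mathrm{Hom}(\mathfrak{g}, V) \oplus \mathrm{Hom}(\mathfrak{h}, W)$, then termwise verification against the formulas for $D_1$ gives
\[
\bigl(D_1(N_1, N_2)\bigr)_{(\alpha, \beta)} = D_1\bigl((N_1, N_2)_{(\alpha, \beta)}\bigr),
\]
using precisely the intertwining relations $\alpha_2 \circ \rho(x_1, x_2) = \rho(\alpha_1(x_1), \alpha_1(x_2)) \circ \alpha_2$ and $\alpha_1 \circ \psi(a_1, a_2) = \psi(\alpha_2(a_1), \alpha_2(a_2)) \circ \alpha_1$ coming from $(\alpha_1, \alpha_2) \in \mathrm{Aut}(\mathfrak{g} \Join \mathfrak{h})$, together with the analogous conjugation relations between $\beta_1, \beta_2$ and the representation data $\rho_V, \rho_W, \psi_V, \psi_W, \alpha, \beta$ (which are exactly the identities packaged into $\mathcal{C}$).

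Combining the two steps yields
\begin{align*}
\mathcal{W}'(\alpha, \beta) - \mathcal{W}(\alpha, \beta)
&= [(\omega', \theta', \nu', \phi')_{(\alpha, \beta)} - (\omega', \theta', \nu', \phi')] - [(\omega, \theta, \nu, \phi)_{(\alpha, \beta)} - (\omega, \theta, \nu, \phi)] \\
&= [D_1(\lambda_1, \lambda_2)_{(\alpha, \beta)} - D_1(\lambda_1, \lambda_2)] \\
&= [D_1(\beta_1 \lambda_1 \alpha_1^{-1} - \lambda_1,\ \beta_2 \lambda_2 \alpha_2^{-1} - \lambda_2)] = 0
\end{align*}
in $\mathcal{H}^2_{\mathrm{MPL}}(\mathfrak{g} \Join \mathfrak{h}, V \Join W)$, proving the proposition. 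The main obstacle is not conceptual but clerical: the $2$-cochain space has eight components and each must be matched against the corresponding slot of $D_1$, invoking the representation identities (\ref{1-iden})--(\ref{24-iden}) and the matched-pair axioms (\ref{11})--(\ref{66}); the computations are however entirely routine once the bookkeeping is set up carefully.
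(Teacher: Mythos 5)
Your proposal is correct and follows essentially the same route as the paper: both express the difference of the two section-induced cocycles as the coboundary $D_1$ of the $1$-cochain $(s_1-s_1',\,s_2-s_2')$, observe that the $(\alpha,\beta)$-twist of this coboundary is the coboundary of the twisted $1$-cochain $(\beta_1\lambda_1\alpha_1^{-1},\beta_2\lambda_2\alpha_2^{-1})$, and conclude that the two candidate values of $\mathcal{W}(\alpha,\beta)$ differ by a coboundary. Your formulation of step two as the identity $(D_1(N_1,N_2))_{(\alpha,\beta)}=D_1((N_1,N_2)_{(\alpha,\beta)})$ is just a cleaner packaging of what the paper asserts implicitly.
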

\begin{proof}

 Let $(s_1,s_2)$ and $(s_1',s_2')$ be two
sections of $(j_1,j_2)$. Let $(\omega,\theta,\nu,\phi)$
be the corresponding  abelian 2-cocycle induced by the section $(s_1, s_2)$ and  $(\omega',\theta',\nu',\phi')$
be the corresponding  abelian 2-cocycle induced by the section $(s_1', s_2')$. Define linear maps $\zeta:
\mathfrak{g}\longrightarrow V,~\eta:\mathfrak{h}\longrightarrow W$ respectively by $\zeta(x)=s_1(x)-s_1'(x),~\eta(a)=s_2(a)-s_2'(a)$. Since
$j_1 \zeta(x)=j_1s_1(x)-j_1s_1'(x)=0,~j_2\eta(a)=j_2s_2(a)-j_2s_2'(a)=0$,
 $\zeta,\eta$ are well defined. Then we have that the induced abelian 2-cocycles $(\omega,\theta,\nu,\phi)$ and $(\omega',\theta',\nu',\phi')$ are equivalent, and the equivalence is given by the map $(\zeta,\eta)$.

On the other hand, the abelian 2-cocycles $(\omega,\theta,\nu,\phi)_{(\alpha,\beta)}$ and $(\omega',\theta',\nu',\phi') $ are equivalent, and the equivalence is given by $(\beta_1 \zeta \alpha_1^{-1},\beta_2 \eta \alpha_2^{-1}) $.

Combining the results of the last two paragraphs, we have that the abelian 2-cocycles $$(\omega,\theta,\nu,\phi)_{(\alpha,\beta)}- (\omega,\theta,\nu,\phi)$$
and $$(\omega',\theta',\nu',\phi')_{(\alpha,\beta)}- (\omega',\theta',\nu',\phi')$$
are equivalent, and equivalence is given by $(\beta_1 \zeta \alpha_1^{-1}-\zeta,\beta_2 \eta \alpha_2^{-1}-\eta)$. Hence they corresponds to the same element in $\mathcal{H}^2_\mathrm{MPL} (\mathfrak{g} \Join \mathfrak{h},V \Join W)$. This completes the proof.
\end{proof}
It follows from Theorem \ref{Idu} and \ref{CIdu} that
 $(\alpha,\beta)\in \mathrm{Aut}(\mathfrak{g} \Join \mathfrak{h}
)\times \mathrm{Aut}(V \Join W)$ is inducible if and only if $(\alpha,\beta)\in \mathcal{C}$ and $\mathcal {W}(\alpha,\beta)=0$.

Let
\begin{align*}
\mathrm{Aut}_{V \Join W}^{\mathfrak{g} \Join \mathfrak{h}}
(\widehat{\mathfrak{g}} \Join \widehat{\mathfrak{h}})=\{\varUpsilon \in \mathrm{Aut}(\widehat{\mathfrak{g}} \Join \widehat{\mathfrak{h}})| \varPhi(\varUpsilon)=(\mathrm{Id} _{\mathfrak{g} \Join \mathfrak{h}},\mathrm{Id}_{V \Join W})\},
 \end{align*}
where $\mathrm{Id}_{\mathfrak{g} \Join \mathfrak{h}}=(\mathrm{id}_\mathfrak{g},\mathrm{id}_\mathfrak{h})$, $\mathrm{Id}_{V \Join W}=(\mathrm{id}_V,\mathrm{id}_W)$. Recall that
\begin{align*}
		 \mathcal{Z}^1_\mathrm{MPL} (\mathfrak{g} \Join \mathfrak{h},V \Join W)=&\left\{(\zeta,\eta),~\zeta:\mathfrak{g}\rightarrow V,~\eta:\mathfrak{h}\rightarrow W \left|\begin{aligned}& D^1(\zeta,\eta)=0
     \end{aligned}\right.\right\}.
     \end{align*}
It is easy to check that $ \mathcal{Z}^1_\mathrm{MPL}(\mathfrak{g} \Join \mathfrak{h},V \Join W)$ is an abelian group, which is called an abelian 1-cocycle on $\mathfrak{g} \Join \mathfrak{h}$ with values in $V \Join W$.
\begin{prop}\label{Der}
Let
  $\mathcal{E}:\xymatrix{
0 \ar[r] & V \Join W \ar[r]^{i_1 \Join i_2}  &  \widehat{\mathfrak{g}} \Join \widehat{\mathfrak{h}} \ar[r]^{j_1 \Join j_2} & \mathfrak{g} \Join \mathfrak{h} \ar[r] & 0
}$
be an abelian extension of $\mathfrak{g} \Join \mathfrak{h}$ by $V \Join W$. Then $ \mathcal{Z}^1_\mathrm{MPL} (\mathfrak{g} \Join \mathfrak{h},V \Join W) \cong \mathrm{Aut}_{V \Join W}^{\mathfrak{g} \Join \mathfrak{h}}(\widehat{\mathfrak{g}} \Join \widehat{\mathfrak{h}})$ as groups.
\end{prop}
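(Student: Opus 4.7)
The plan is to exhibit explicit mutually inverse maps between the two groups, and then check they intertwine the group operations. The construction is essentially the $(\alpha,\beta)=(\mathrm{Id}_{\mathfrak{g}\Join\mathfrak{h}},\mathrm{Id}_{V\Join W})$ specialization of the formulas appearing in Theorem~\ref{Idu}: the auxiliary pair $(\zeta,\eta)$ produced there is precisely what parameterizes $\mathrm{Aut}_{V\Join W}^{\mathfrak{g}\Join\mathfrak{h}}(\widehat{\mathfrak{g}}\Join\widehat{\mathfrak{h}})$.

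First, given $(\zeta,\eta)\in\mathcal{Z}^1_\mathrm{MPL}(\mathfrak{g}\Join\mathfrak{h},V\Join W)$, I would define $\Psi(\zeta,\eta)=(\gamma_1,\gamma_2)$ by
$\gamma_1(u+s_1(x))=u+\zeta(x)+s_1(x)$ and $\gamma_2(w+s_2(a))=w+\eta(a)+s_2(a)$,
as in (\ref{Aut1})-(\ref{Aut2}) with the identity choices. These maps are bijective (upper-triangular with identity on the diagonal), visibly satisfy $\gamma_1|_V=\mathrm{id}_V$, $\gamma_2|_W=\mathrm{id}_W$, $j_1\gamma_1 s_1=\mathrm{id}_\mathfrak{g}$ and $j_2\gamma_2 s_2=\mathrm{id}_\mathfrak{h}$, so they sit in $\mathrm{Aut}_{V\Join W}^{\mathfrak{g}\Join\mathfrak{h}}$ the moment they are verified to be morphisms of the matched pair. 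For that verification I would mimic the computation carried out in the converse direction of Theorem~\ref{Idu}: expand $[\gamma_1(\widehat{x}_1),\gamma_1(\widehat{x}_2),\gamma_1(\widehat{x}_3)]$, $[\gamma_2(\widehat{a}_1),\gamma_2(\widehat{a}_2),\gamma_2(\widehat{a}_3)]$, $\gamma_2(\widehat{\rho}(\widehat{x}_1,\widehat{x}_2)\widehat{a})$ and $\gamma_1(\widehat{\psi}(\widehat{a}_1,\widehat{a}_2)\widehat{x})$, use that $V\Join W$ is an abelian matched pair so that brackets with two $V$-entries or two $W$-entries vanish, and collect terms; what remains on each side is exactly one of the 1-cocycle equations (\ref{1co-1})-(\ref{1co-3}) and its companion.

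Conversely, given $\Upsilon=(\gamma_1,\gamma_2)\in\mathrm{Aut}_{V\Join W}^{\mathfrak{g}\Join\mathfrak{h}}(\widehat{\mathfrak{g}}\Join\widehat{\mathfrak{h}})$, I would set $\Phi(\Upsilon)=(\zeta,\eta)$ with $\zeta(x):=\gamma_1 s_1(x)-s_1(x)$ and $\eta(a):=\gamma_2 s_2(a)-s_2(a)$. The relations $j_1\gamma_1 s_1=\mathrm{id}_\mathfrak{g}$ and $j_2\gamma_2 s_2=\mathrm{id}_\mathfrak{h}$ force $\zeta(x)\in\ker j_1=V$ and $\eta(a)\in\ker j_2=W$, so $(\zeta,\eta)\in\mathrm{Hom}(\mathfrak{g},V)\oplus\mathrm{Hom}(\mathfrak{h},W)$. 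Applying $\gamma_1$ to $[s_1(x_1),s_1(x_2),s_1(x_3)]$ and the bracket-preservation of $\gamma_1$, then the analogous identities for $\gamma_2$, $\widehat{\rho}$ and $\widehat{\psi}$, and once more discarding vanishing terms with two $V$- or $W$-entries, yields precisely $D_1(\zeta,\eta)=0$. The equalities $\Phi\circ\Psi=\mathrm{id}$ and $\Psi\circ\Phi=\mathrm{id}$ are then immediate from the definitions.

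For the group structure, the sum of two cocycles on the $\mathcal{Z}^1$ side must correspond to composition on the $\mathrm{Aut}$ side. If $\Psi(\zeta,\eta)=(\gamma_1,\gamma_2)$ and $\Psi(\zeta',\eta')=(\gamma_1',\gamma_2')$, then a one-line calculation gives $(\gamma_1\circ\gamma_1')(u+s_1(x))=\gamma_1(u+\zeta'(x)+s_1(x))=u+\zeta'(x)+\zeta(x)+s_1(x)$, so the composite corresponds to the cocycle $(\zeta+\zeta',\eta+\eta')$, and similarly for the $\mathfrak{h}$-component; hence $\Psi$ is a group isomorphism. The main obstacle is the bracket-preservation check in Step~2: it requires carefully unwinding the 3-Lie brackets and the two pairing maps on both $\widehat{\mathfrak{g}}$ and $\widehat{\mathfrak{h}}$, and pairing each resulting identity with the correct 1-cocycle equation. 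The computation is lengthy but entirely parallel to the one already carried out in Theorem~\ref{Idu}, so no new algebraic idea is needed.
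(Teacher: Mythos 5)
Your proposal is correct and follows essentially the same route as the paper: the paper defines the map from $\mathrm{Aut}_{V \Join W}^{\mathfrak{g} \Join \mathfrak{h}}(\widehat{\mathfrak{g}} \Join \widehat{\mathfrak{h}})$ to $\mathcal{Z}^1_\mathrm{MPL}$ by $\zeta_{\gamma_1}(x)=\gamma_1 s_1(x)-s_1(x)$, $\eta_{\gamma_2}(a)=\gamma_2 s_2(a)-s_2(a)$, checks it is a well-defined group homomorphism, and establishes bijectivity by constructing in the surjectivity step exactly the inverse map $\gamma_1(u+s_1(x))=u+\zeta(x)+s_1(x)$ that you take as your starting point. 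Presenting the two mutually inverse maps up front rather than proving injectivity and surjectivity separately is only a reorganization, not a different argument.
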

\begin{proof}
Define $\varPsi:\mathrm{Aut}_{V \Join W}^{\mathfrak{g} \Join \mathfrak{h}}(\widehat{\mathfrak{g}} \Join \widehat{\mathfrak{h}})\longrightarrow \mathcal{Z}^1_\mathrm{MPL}(\mathfrak{g} \Join \mathfrak{h},V \Join W)$  by
  $\varPsi(\varUpsilon)=(\varPsi(\gamma_1),\varPsi(\gamma_2))$, where $\varPsi(\gamma_1) \triangleq \zeta_{\gamma_1}:\mathfrak{g} \to V$ and $\varPsi(\gamma_2) \triangleq \eta_{\gamma_2}:\mathfrak{h} \to W$ are given by
  $$\varPsi(\gamma_1)(x)=\zeta_{\gamma_1}(x)=\gamma_1 s_1(x)-s_1(x),~~
 \varPsi(\gamma_2)(a)=\eta_{\gamma_2}(a)=\gamma_2 s_2(a)-s_2(a), $$
  for all $\varUpsilon=(\gamma_1,\gamma_2)\in \mathrm{Aut}_{V \Join W}^{\mathfrak{g} \Join \mathfrak{h}}(\widehat{\mathfrak{g}} \Join \widehat{\mathfrak{h}}),~x \in \mathfrak{g},~a \in \mathfrak{h}.$
Firstly, we prove that $\varPsi$ is well-defined. By direct computation, we have
\begin{eqnarray*}
 &&\rho(x_1, x_2)\eta_{\gamma_2}(x)-\eta_{\gamma_2}(\rho(x_1, x_2)a)+\alpha(\zeta_{\gamma_1}(x_1), x_2)a-\alpha(\zeta_{\gamma_1}(x_2), x_1)a\\
 &=&{\widehat{\rho}}(s_1(x_1), s_1(x_2))\eta_{\gamma_2}(a)-\eta_{\gamma_2}(\rho(x_1, x_2)a)
 +{\widehat{\rho}}(\zeta_{\gamma_1}(x_1), s_1(x_2))(s_2(a))-{\widehat{\rho}}(\zeta_{\gamma_1}(x_2), s_1(x_1))(s_2(a))\\
 &=&{\widehat{\rho}}(s_1(x_1), s_1(x_2))\eta_{\gamma_2}(a)  +{\widehat{\rho}}(\zeta_{\gamma_1}(x_1), s_1(x_2))(s_2(a))-{\widehat{\rho}}(\zeta_{\gamma_1}(x_2), s_1(x_1))(s_2(a))\\
 &&-\gamma_2 {\widehat{\rho}}(s_1(x_1),s_1(x_2))(s_2(a))+{\widehat{\rho}}(s_1(x_1), s_1(x_2))(s_2(a))+\gamma_2 {\widehat{\rho}}(s_1(x_1),s_1(x_2))(s_2(a))\\
 && -\gamma_2 s_2(\rho(x_1, x_2)a)+s_2(\rho(x_1, x_2)a)-{\widehat{\rho}}(s_1(x_1), s_1(x_2))(s_2(a))\\
&=&{\widehat{\rho}}(s_1(x_1), s_1(x_2))\eta_{\gamma_2}(a)  +{\widehat{\rho}}(\zeta_{\gamma_1}(x_1), s_1(x_2))(s_2(a))-{\widehat{\rho}}(\zeta_{\gamma_1}(x_2), s_1(x_1))(s_2(a))\\
&&- {\widehat{\rho}}^l(\gamma_ 1 s_1(x_1),\gamma_ 1 s_1(x_2))(\gamma_2 s_2(a))+{\widehat{\rho}}(s_1(x_1), s_1(x_2))(s_2(a))+\gamma_2\nu(x_1,x_2)a-\nu(x_1,x_2)a\\
&=&-{\widehat{\rho}}(\zeta_{\gamma_1}(x_1),x_2)(\eta_{\gamma_2}(a)) +\gamma_2\nu(x_1,x_2)a-\nu(x_1,x_2)a\\
&=&0.
\end{eqnarray*}
By the same token, we can prove that $(\zeta_{\gamma_1},\eta_{\gamma_2})$ satisfy the other identities in $ \mathcal{Z}^1_\mathrm{MPL}(\mathfrak{g} \Join \mathfrak{h},V \Join W)$.
Thus, $\varPsi$ is well-defined.

Secondly, for any $\varUpsilon,\varUpsilon'\in \mathrm{Aut}_{V \Join W}^{\mathfrak{g} \Join \mathfrak{h}}(\widehat{\mathfrak{g}} \Join \widehat{\mathfrak{h}})$ and $x\in \mathfrak{g}$, suppose $\varPsi(\varUpsilon)=(\varPsi(\gamma_1),\varPsi(\gamma_2))=(\zeta_{\gamma_1},\eta_{\gamma_2})$
and $\varPsi(\varUpsilon')=(\varPsi(\gamma^{'}_{1}),\varPsi(\gamma^{'}_{2}))=(\zeta_{\gamma^{'}_{1}},\eta_{\gamma^{'}_2}).$
We get
\begin{align*}\varPsi(\gamma_1 \gamma^{'}_1)(x)&=\gamma_1 \gamma^{'}_{1}s_1(x)-s_1(x)
\\&=\gamma_1(\zeta_{\gamma'_1}(x)+s_1(x))-s_1(x)
\\&=\gamma_{1}\zeta_{\gamma'_1}(x)+\gamma_1 s_1(x)-s_1(x)
\\&=\zeta_{\gamma_1}(x)+\zeta_{\gamma'_1}(x) \quad (\because ~ \gamma_{1}|_V = \mathrm{id}_V)
\\&=\varPsi(\gamma_1)(x)+\varPsi(\gamma'_1)(x).\end{align*}
Take the same procedure, one can verify that
\begin{equation*}\varPsi(\gamma_2 \gamma^{'}_2)(a)=\varPsi(\gamma_2)(a)+\varPsi(\gamma'_2)(a).\end{equation*}
Thus, $\varPsi(\varUpsilon \varUpsilon')=\varPsi(\varUpsilon)+\varPsi(\varUpsilon')$, that is, $\varPsi$ is a homomorphism of groups.

Finally, we prove that $\varPsi$ is bijective.  For all $\varUpsilon=(\gamma_1,\gamma_2)\in \mathrm{Aut}_{V \Join W}^{\mathfrak{g} \Join \mathfrak{h}}(\widehat{\mathfrak{g}} \Join \widehat{\mathfrak{h}})$, if $\varPsi(\gamma_1)=\zeta_{\gamma_1}=0$, we can get $\zeta_{\gamma_1}(x)=\gamma_1 s_1(x)-s_1(a)=0$, Therefore, for any $\widehat{x}=u+ s_1(x) \in \widehat{\mathfrak{g}}$, we have
$$ \gamma_1(\widehat{x}) = \gamma_1(u+ s_1(x))=\gamma_1(u)+\gamma_1 s_1(x)-s_1(x)+s_1(x)=u+s_1(x)=\widehat{x},$$
that is, $\gamma_1=\mathrm{id}_{\hat{\mathfrak{g}}}$.
Similarly, $\gamma_2=\mathrm{id}_{\hat{\mathfrak{h}}}$. Thus, $\varPsi$ is injective. To show that the map $\varPsi$ is surjective,
 for any $(\zeta,\eta)\in \mathcal{Z}^1_\mathrm{MPL}(\mathfrak{g} \Join \mathfrak{h},V \Join W)$, define linear maps $\gamma_1:\widehat{\mathfrak{g}} \rightarrow \widehat{\mathfrak{g}} $
 and $\gamma_2:\widehat{\mathfrak{h}} \rightarrow \widehat{\mathfrak{h}}$ respectively by
  \begin{equation}\label{W7}\gamma_1(\widehat{x})=\gamma_1(u+s_1(x))=s_1(x)+\zeta(a)+u,~\forall~\widehat{x}\in \widehat{\mathfrak{g}},\end{equation}
  \begin{equation}\label{W8}\gamma_2(\widehat{a})=\gamma_1(w+ s_2(a))= s_2(a)+\eta(x)+w,~\forall~\widehat{a}\in \widehat{\mathfrak{h}}.\end{equation}
We need to verify that $\varUpsilon=(\gamma_1,\gamma_2) $ is an automorphism of the matched pair of 3-Lie algebras $\widehat{\mathfrak{g}} \Join \widehat{\mathfrak{h}}$.
It is obviously that $\gamma_1$ and $\gamma_2$ are bijective. Let $\widehat{x_1}, \widehat{x_2} ,\widehat{x_3}\in \widehat{\mathfrak{g}}$ and $\widehat{a_1}, \widehat{a_2}, \widehat{a_3} \in \widehat{\mathfrak{h}}$, it is not hard to see that $\gamma_1([\widehat{x_1}, \widehat{x_2}, \widehat{x_3}] )=[\gamma_1(\widehat{x_1}), \gamma_1(\widehat{x_2}), \gamma_1(\widehat{x_3})] $ and $\gamma_2([\widehat{a_1}, \widehat{a_2}, \widehat{a_3}] )=[\gamma_2(\widehat{a_1}), \gamma_2(\widehat{a_2}), \gamma_2(\widehat{a_3})] .$
Thus, we have $\gamma_1$ and $\gamma_2$ are isomorphism of the 3-Lie algebras.
By $\rho(x_1, x_2)\eta(x)-\eta(\rho(x_1, x_2)a)+\alpha(\zeta(x_1), x_2)a-\alpha(\zeta(x_2), x_1)a=0$, we have
\begin{eqnarray*}
&&\gamma_2 ({\widehat{\rho}}(\widehat{x_1}, \widehat{x_2})(\widehat{a})) \\
&=&\gamma_2 ({\widehat{\rho}}(u_1+s_1(x_1), u_2+s_1(x_2))(w+s_2(a))) \\
&=&\gamma_2 ({\widehat{\rho}}(s_1(x_1), s_1(x_2))( w)+{\widehat{\rho}}(u_1, s_1(x_2))(s_2(a))-{\widehat{\rho}}(u_2, s_1(x_1))(s_2(a))+{\widehat{\rho}}(s_1(x_1), s_1(x_2))s_2(a))\\
&=&\gamma_2 (\rho_W(x_1, x_2)w+\alpha(u_1,x_2)a-\alpha(u_2, x_1)a+\nu(x_1, x_2)a+\eta(\rho(x_1, x_2)a)+s_2(\rho(x_1, x_2)a))\\
&=&\rho_W(x_1, x_2)(w)+\alpha(u_1,x_2)(a)-\alpha(u_2,x_1)\alpha_(a)+\nu(x_1, x_2)(a)+\rho(x_1,x_2)\eta(a)+\alpha(\zeta(x_1),x_2)(a)\\
&&-\alpha(\zeta(x_2), x_1)(a)+s_2(\rho(x_1, x_2)\alpha(a))\\
&=& {{\widehat{\rho}}}(s_1(x_1),s_1(x_2))(w+\eta(a))+ {{\widehat{\rho}}}(u_1+\eta(x_1), s_1(x_2))s_2(a)\\
&&-{{\widehat{\rho}}}(u_2+\eta(x_2), s_1(x_1))s_2(a)+{{\widehat{\rho}}}(s_1(x_1),s_1(x_2))(s_2(a))\\
&=&{{\widehat{\rho}}}(s_1(x_1)+\zeta(x_1)+u_1 ,  s_1(x_2)+\zeta(x_2)+u_2)(s_2(a)+\eta(a)+w)\\
&=&{{\widehat{\rho}}}(\gamma_1(\widehat{x_1}), (\widehat{x_2}))(\gamma_2(\widehat{a})),
\end{eqnarray*}
which show that $\gamma_2 ({\widehat{\rho}}(\widehat{x_1}, \widehat{x_2})(\widehat{a})) = {{\widehat{\rho}}}(\gamma_1(\widehat{x_1}), (\widehat{x_2}))(\gamma_2(\widehat{a}))$. Similarly, we can get $\gamma_1 ({\widehat{\psi}}(\widehat{a_1}, \widehat{a_2})(\widehat{x})) = {{\widehat{\psi}}}(\gamma_2(\widehat{a_1}), (\widehat{a_2}))(\gamma_1(\widehat{x}))$.
 Thus, $\varUpsilon=(\gamma_1,\gamma_2)\in  \mathrm{Aut}(\widehat{\mathfrak{g}} \Join \widehat{\mathfrak{h}})$.
 Moreover, we have $(j_1\gamma_1 s_1,\gamma_{1}|_{V})=(\mathrm{id}_\mathfrak{g},\mathrm{id}_V)$ and
 $( j_2 \gamma_2 s_2,\gamma_{2}|_{W})=(\mathrm{id}_\mathfrak{h},\mathrm{id}_W)$. It follows that $\varUpsilon=(\gamma_1,\gamma_2) \in \mathrm{Aut}_{V \Join W}^{\mathfrak{g} \Join \mathfrak{h}}(\widehat{\mathfrak{g}} \Join \widehat{\mathfrak{h}})$. Thus, $\varPsi$ is surjective. In all, $\varPsi$ is bijective.
 So, $\mathcal{Z}^1_\mathrm{MPL}(\mathfrak{g} \Join \mathfrak{h},V \Join W) \cong \mathrm{Aut}_{V \Join W}^{\mathfrak{g} \Join \mathfrak{h}}(\widehat{\mathfrak{g}} \Join \widehat{\mathfrak{h}})$.
\end{proof}

\begin{thm} Let
  $\mathcal{E}:\xymatrix{
0 \ar[r] & V \Join W \ar[r]^{i_1 \Join i_2}  &  \widehat{\mathfrak{g}} \Join \widehat{\mathfrak{h}} \ar[r]^{j_1 \Join j_2} & \mathfrak{g} \Join \mathfrak{h} \ar[r] & 0
}$
be an abelian extension of $\mathfrak{g} \Join \mathfrak{h}$ by $V \Join W$. There is an exact sequence:
$$0\longrightarrow \mathcal{Z}^1_\mathrm{MPL}(\mathfrak{g} \Join \mathfrak{h},V \Join W)\stackrel{\iota}{\longrightarrow} \mathrm{Aut}_{V \Join W}(\widehat{\mathfrak{g}} \Join \widehat{\mathfrak{h}})
\stackrel{\varPhi}{\longrightarrow}\mathcal{C} \stackrel{\mathcal {W}}{\longrightarrow} \mathcal{H}^2_\mathrm{MPL}(\mathfrak{g} \Join \mathfrak{h},V \Join W).$$
\end{thm}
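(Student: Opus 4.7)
The plan is to verify exactness at each of the three middle/left slots by leveraging the results already established in the excerpt, rather than redoing any of the compatibility calculations from scratch. Since $\mathcal{C}$ is defined as the subgroup of pairs for which compatible lifts $(\zeta,\eta)$ may exist (Theorem \ref{Idu}), and since the Wells map $\mathcal{W}$ has already been shown to be a well-defined map out of $\mathcal{C}$, the exact sequence is essentially a repackaging of Proposition \ref{Der} together with Theorem \ref{CIdu}.

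First I would deal with exactness at $\mathcal{Z}^1_\mathrm{MPL}(\mathfrak{g}\Join\mathfrak{h},V\Join W)$. The map $\iota$ is the composition of the inverse of the isomorphism $\varPsi$ from Proposition \ref{Der} with the obvious inclusion $\mathrm{Aut}_{V\Join W}^{\mathfrak{g}\Join \mathfrak{h}}(\widehat{\mathfrak{g}}\Join\widehat{\mathfrak{h}}) \hookrightarrow \mathrm{Aut}_{V\Join W}(\widehat{\mathfrak{g}}\Join\widehat{\mathfrak{h}})$; since $\varPsi$ is a group isomorphism and the inclusion is injective, $\iota$ is injective. Next, for exactness at $\mathrm{Aut}_{V\Join W}(\widehat{\mathfrak{g}}\Join\widehat{\mathfrak{h}})$, I would observe that by the very definition of $\mathrm{Aut}_{V\Join W}^{\mathfrak{g}\Join\mathfrak{h}}(\widehat{\mathfrak{g}}\Join\widehat{\mathfrak{h}})$ this subgroup coincides with $\ker(\varPhi)$, and then invoke Proposition \ref{Der} to identify it with the image of $\iota$. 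Both containments are then tautological.

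The only step with real content is exactness at $\mathcal{C}$. For $\mathrm{im}(\varPhi) \subseteq \ker(\mathcal{W})$, I would take $(\alpha,\beta) = \varPhi(\varUpsilon)$ with $\varUpsilon \in \mathrm{Aut}_{V\Join W}(\widehat{\mathfrak{g}}\Join\widehat{\mathfrak{h}})$, note that $(\alpha,\beta)$ is by definition inducible, apply Theorem \ref{CIdu} to conclude that $(\alpha,\beta)\in\mathcal{C}$ and that the cocycles $(\omega,\theta,\nu,\phi)_{(\alpha,\beta)}$ and $(\omega,\theta,\nu,\phi)$ are cohomologous, and hence $\mathcal{W}(\alpha,\beta) = [(\omega,\theta,\nu,\phi)_{(\alpha,\beta)} - (\omega,\theta,\nu,\phi)] = 0$. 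For the reverse inclusion $\ker(\mathcal{W}) \subseteq \mathrm{im}(\varPhi)$, starting from $(\alpha,\beta) \in \mathcal{C}$ with $\mathcal{W}(\alpha,\beta)=0$, the vanishing of the Wells class produces an equivalence $(f,g)\in \mathrm{Hom}(\mathfrak{g},V)\oplus\mathrm{Hom}(\mathfrak{h},W)$ between the two cocycles; Theorem \ref{CIdu} then guarantees inducibility, so there exists $\varUpsilon\in \mathrm{Aut}_{V\Join W}(\widehat{\mathfrak{g}}\Join\widehat{\mathfrak{h}})$ with $\varPhi(\varUpsilon)=(\alpha,\beta)$.

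The main conceptual obstacle, already handled upstream, was to show that $\mathcal{W}$ is independent of the choice of section and that it lands in the cohomology group rather than merely the cocycle group; this is what makes the characterization of inducibility via $\mathcal{W}(\alpha,\beta)=0$ meaningful. A secondary subtlety is that $\iota$ and $\varPhi$ must be verified to be group homomorphisms so that the notion of exactness is well-posed; for $\varPhi$ this is Proposition \ref{Idc}, and for $\iota$ this follows from Proposition \ref{Der}. Once these pieces are in place, the proof of the theorem is essentially a diagram-theoretic assembly, with no further identity manipulations required.
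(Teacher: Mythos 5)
Your proposal is correct and follows essentially the same route as the paper: injectivity of $\iota$ via Proposition \ref{Der}, exactness at the automorphism group from $\ker(\varPhi)=\mathrm{Aut}_{V \Join W}^{\mathfrak{g} \Join \mathfrak{h}}(\widehat{\mathfrak{g}} \Join \widehat{\mathfrak{h}})\cong\mathcal{Z}^1_\mathrm{MPL}$, and exactness at $\mathcal{C}$ by both directions of Theorem \ref{CIdu}. Your reading of $\iota$ as $\varPsi^{-1}$ followed by the inclusion is in fact slightly more careful than the paper's phrasing, but it is the same argument.
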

\begin{proof}Since the inclusion map $\iota : \mathcal{Z}^1_\mathrm{MPL} (\mathfrak{g} \Join \mathfrak{h},V \Join W) \to \mathrm{Aut}_{V \Join W}(\widehat{\mathfrak{g}} \Join \widehat{\mathfrak{h}})$ is an injection, the above sequence is exact at the first term.

Next, since $\mathrm{ker} (\varPhi)=\mathrm{Aut}_{V \Join W}^{\mathfrak{g} \Join \mathfrak{h}}(\widehat{\mathfrak{g}} \Join \widehat{\mathfrak{h}})$.
By Theorem \ref{Der}, we have $\mathrm{Aut}_{V \Join W}^{\mathfrak{g} \Join \mathfrak{h}}(\widehat{\mathfrak{g}} \Join \widehat{\mathfrak{h}})\cong \mathcal{Z}^1_\mathrm{MPL} (\mathfrak{g} \Join \mathfrak{h},V \Join W).$
Thus, we have $\mathrm{ker} (\varPhi) =\mathrm{im}(\iota)$. This shows that the sequence is exact at the second term.

Finally, to show that the sequence is exact at the third term, take a pair $(\alpha,\beta) \in \mathrm{ker}(\mathcal{W})$, that is $\mathcal {W}(\alpha,\beta)=0$. Thus, we have the abelian 2-cocycles $(\omega,\theta,\nu,\phi)_{(\alpha,\beta)}$ and $(\omega,\theta,\nu,\phi)$ are equivalent, by Theorem \ref{CIdu}, the pair $(\alpha,\beta)$ is inducible. In other words, there exists an automorphism $\varUpsilon \in \mathrm{Aut}_{V \Join W}(\widehat{\mathfrak{g}} \Join \widehat{\mathfrak{h}})$ such that $\varPhi(\varUpsilon) = (\alpha,\beta)$. This shows that $(\alpha,\beta) \in \mathrm{im}(\varPhi)$. Conversely, if a pair $(\alpha,\beta) \in \mathrm{im}(\varPhi)$, then by definition the pair $(\alpha,\beta)$ is inducible. Hence again by Theorem \ref{CIdu}, the abelian 2-cocycles $(\omega,\theta,\nu,\phi)_{(\alpha,\beta)}$ and $(\omega,\theta,\nu,\phi)$ are equivalent. Therefore, $\mathcal{W}(\alpha,\beta) = 0$ which implies that $(\alpha,\beta) \in \mathrm{ker}(\mathcal{W})$. Thus, we obtain $\mathrm{ker}(\mathcal{W}) = \mathrm{im}(\varPhi)$. This completes the proof.
\end{proof}

\end{document}